\renewcommand\subsubsection{\@startsection{subsubsection}{3}{\z@}%
                                     {-2ex\@plus -1ex \@minus -.2ex}%
                                     {-0.5em}% <---changed
                                     {\normalfont\normalsize\bfseries}}
                                     \renewcommand\subsection{\@startsection{subsection}{3}{\z@}%
                                     {-3.25ex\@plus -1ex \@minus -.2ex}%
                                     {-0.5em}% <---changed
                                     {\normalfont\normalsize\bfseries}}
\theoremstyle{plain}
\newtheorem{Thm}{Theorem}
\newtheorem{ThmA}{Theorem}
\newtheorem{Prop}[Thm]{Proposition}
\newtheorem{Lem}[Thm]{Lemma}
\newtheorem{Cor}[Thm]{Corollary}
\theoremstyle{definition}
\newtheorem{Fact}[Thm]{Fact}
\newtheorem{Rem}[Thm]{Remark}
\newtheoremstyle{named}{}{}{}{}{\bfseries}{.}{.5em}{\thmnote{#3}#1}
\theoremstyle{named}
\DeclareMathOperator{\Ad}{Ad}
\DeclareMathOperator{\Aut}{Aut}
\DeclareMathOperator{\Ch}{Ch}
\DeclareMathOperator{\Cl}{Cl}
\DeclareMathOperator{\codim}{codim}
\DeclareMathOperator{\diag}{diag}
\DeclareMathOperator{\End}{End}
\DeclareMathOperator{\GL}{GL}
\DeclareMathOperator{\Hilb}{Hilb}
\DeclareMathOperator{\Hom}{Hom}
\DeclareMathOperator{\Ima}{Im}
\DeclareMathOperator{\Lie}{Lie}
\DeclareMathOperator{\Pic}{Pic}
\DeclareMathOperator{\PGL}{PGL}
\DeclareMathOperator{\pr}{pr}
\DeclareMathOperator{\PSO}{PSO}
\DeclareMathOperator{\PSp}{PSp}
\DeclareMathOperator{\rk}{rk}
\DeclareMathOperator{\Rep}{Rep}
\DeclareMathOperator{\SL}{SL}
\DeclareMathOperator{\SO}{SO}
\DeclareMathOperator{\Sp}{Sp}
\DeclareMathOperator{\Spin}{Spin}
\DeclareMathOperator{\Stab}{Stab}
\DeclareMathOperator{\Tr}{Tr}
\DeclareMathOperator{\ud}{\mathrm{d}\!}
\DeclareMathOperator{\ds}{\!/\mkern-2mu/\mkern-2mu}
\newcommand*\circled[1]{
  \tikz[baseline=(char.base)]\node[shape=circle,draw,inner sep=0.2pt,font=\tiny,minimum size=8pt] (char) {#1};}
\newcommand*{\rom}[1]{\expandafter\@slowromancap\romannumeral #1@}
\renewcommand{\descriptionlabel}[1]{\hspace\labelsep\upshape\bfseries #1.}
\let\orgdescriptionlabel\descriptionlabel
\renewcommand*{\descriptionlabel}[1]{%
  \let\orglabel\label
  \let\label\@gobble
  \phantomsection
  \edef\@currentlabel{#1}%
  \let\label\orglabel
  \orgdescriptionlabel{#1}%
}
\title{Singularities of character varieties}
\author{Cheng Shu}
\address[C. Shu]{Institute for Theoretical Sciences, Westlake University, Hangzhou, China}
\email[C. Shu]{shuc@zju.edu.cn}
\colorlet{ivory}{Apricot!30!}
\colorlet{space}{black!85!}
\definecolor{bgc}{RGB}{29, 44, 46}
\definecolor{txt}{RGB}{223, 222, 189}
\definecolor{cmd}{RGB}{206, 151, 88}
\begin{document}
\let\bs\boldsymbol
\begin{abstract}
For any complex reductive group $G$ and any compact Riemann surface with genus $g>0$, we show that every connected component of the associated character variety is $\mathbb{Q}$-factorial and has symplectic singularities, and classify the connected components that admit symplectic resolutions. When $g>1$, we use elliptic endoscopic groups to control the singularities caused by irreducible local systems with automorphism groups larger than the centre of $G$; when $g=1$, our analysis is based on some results of Borel-Friedman-Morgan. The main results for $g>1$ were obtained by Herbig-Schwarz-Seaton via a different approach.
\end{abstract}
\maketitle

\tableofcontents
\addtocontents{toc}{\protect\setcounter{tocdepth}{-1}}
%%% Usually after Introduction %%%
\setcounter{tocdepth}{1}
\numberwithin{Thm}{section}
\numberwithin{equation}{section}
\addtocontents{toc}{\protect\setcounter{tocdepth}{1}}
%%%%%%%%%%%%%%%%%%%
\section{Introduction}\label{INT}

Character varieties stand at the cross-road of a wide range of disciplines: low-dimensional topology, mathematical physics, geometric representation theory, and more. In its simplest form, a character variety is the coarse moduli space of $G$-local systems on a compact Riemann surface for some reductive group $G$, and it is singular in general. A key feature of character varieties is that their smooth loci carry a natural symplectic structure, which was a classical result due to Goldman \cite{G1}. The interplay between symplectic structures and the singularities impose strong constraints on the geometry of character varieties. This highlights the importance of the question whether character varieties have symplectic singularities in the sense of Beauville \cite{Beau}. For $G$ of type A, this was proved by Bellamy-Schedler \cite{BS}, and the cases where symplectic resolutions exist were also classified; the proof was essentially reduced to controlling the dimensions of the singular loci. The purpose of this article is to study the symplectic singularities of character varieties for arbitrary reductive groups, revealing a full picture behind the results of Bellamy-Schedler. The main new difficulty in the case of $g>1$ is computing the dimensions of orbifold singularities, while the case of $g=1$  requires some subtle Lie-theoretic results in order to show the nonexistence of symplectic resolutions.

A more precise description of our object of study is as follows. Let $\Pi$ be the fundamental group of a compact Riemann surface $C$ with genus $g>0$, and let $G$ be a connected reductive group over the complex numbers. We can first form the representation variety $\Rep(\Pi,G)$, an affine variety whose closed points are in bijection with the set of homomorphisms $\rho:\Pi\rightarrow G$. Such a homomorphism is also called a $G$-representation of $\Pi$. There is a natural action of $G$ on $\Rep(\Pi,G)$ induced by its conjugation action on itself, and the character variety $\Ch(\Pi,G)$ is then the affine GIT quotient of $\Rep(\Pi,G)$. We denote by $\Rep^{\heartsuit}(\Pi,G)$ the smooth open subset consisting of irreducible representations $\rho$; i.e., those $\rho$ whose images are not contained in any proper parabolic subgroup of $G$. The categorical quotient $\Ch(\Pi,G)$ restricts to a geometric quotient $\Ch^{\heartsuit}(\Pi,G)$; this is the moduli space of irreducible $G$-local systems on $C$. In the study of $\Ch(\Pi,G)$, we will follow different strategies in accordance with whether $\Ch^{\heartsuit}(\Pi,G)$ is empty or not. 

The character varieties for $g=1$ parametrise commuting semi-simple elements, which amount to direct sums of one-dimensional representations of $\Pi$ if $G=\GL_n$; therefore, $\Ch^{\heartsuit}(\Pi,G)$ is empty unless $G$ is a torus. The study of these character varieties relies on the identification (assuming $G$ to be simply connected semi-simple for simplicity)
$$
\Ch(\Pi,G)\cong (T\times T)/W,
$$
where $T\subset G$ is a maximal torus and $W$ is the Weyl group defined by $T$. A recent theorem of Li-Nadler-Yun \cite{LNY} shows that it is an isomorphism of schemes. As can be seen from the results of Bellamy-Schedler, whether this variety admits symplectic resolutions depends on the way $W$ acts on $T$. If $G$ is not simply connected, each connected component of $\Ch(\Pi,G)$ is of the form $(T_z\times T_z)/W_z$, with $z\in\pi_1(G)$. Now, $T_z$ is still a torus and $W_z$ is the Weyl group of a certain root system associated to $T_z$. We will need Borel-Friedman-Morgan's realisation of $W_z$ to determine how it acts on $T_z$ (see \cite{BFM}).

For $g>1$, the open subset $\Ch^{\heartsuit}(\Pi,G)$ is dense. There is a dichotomy between those singularities in $\Ch^{\heartsuit}(\Pi,G)$ and those outside $\Ch^{\heartsuit}(\Pi,G)$. We will need to determine the dimensions of both of these singular loci in order to show that $\Ch(\Pi,G)$ has symplectic singularities. The complement of $\Ch^{\heartsuit}(\Pi,G)$ can be analysed using character varieties associated to proper Levi subgroups of $G$, and so is subject to standard techniques. However, as was observed by Frenkel-Witten \cite{FW}, the (orbifold) singularities inside $\Ch^{\heartsuit}(\Pi,G)$ are precisely the geometric counterpart of an old and deep topic in the Langlands program: endoscopy. This point of view will be taken seriously to deliver the expected dimension estimate. An alternative approach in type A is also available: the connected components of  $\PGL_n$-character varieties can be realised as finite quotients of twisted $\SL_n$-character varieties, and orbifold singularities arise as the image of the fixed point loci. However, for groups of other types, such a simple description is not possible, and the endoscopy point of view is indispensable.

\subsection*{Geometric endoscopy}\hfill 

Perhaps this is a good moment to remind the readers of what endoscopy is. Here we follow the exposition of Ng\^o \cite[\S 1.8]{Ngo}. We will be concerned with a pair of Langlands dual groups $(G,G^L)$, and the ideas from endoscopy will be applied to $G^L$-character varieties. Since for our purpose $G$ is always a split group, the twist by the group of outer automorphisms of $G$ in \textit{loc. cit.} is not necessary. Let $s$ be a semi-simple element of $G^L$, and let $H^L:=C_{G^L}(s)$ be the centraliser of $s$, and $\hat{H}$ its identity component. Write $\Gamma:=H^L/\hat{H}$, the component group of $H^L$. An \textit{endoscopic datum} of $G$ over $C$ is a pair $(s,\omega)$, where $s$ is as above and $\omega:\pi_1(C)\rightarrow \Gamma$ is a group homomorphism from the \'etale fundamental group of $C$ (regarded as an algebraic curve) to $\Gamma$. What is conventionally called an \textit{endoscopic group} (which appears on the automorphic side) is defined to be a certain twisted form of the Langlands dual of $\hat{H}$, with the twist determined by $\omega$. We will however stay on the Galois side and directly work with groups like $H^L$.  Replacing the \'etale fundamental group by the topological fundamental group $\Pi$ of $C$, we arrive at an analogue of endoscopy data for character varieties. More closely related to our problem concerning $\Ch^{\heartsuit}(\Pi,G^L)$ is a distinguished class of endoscopic data that are called \textit{elliptic}: those with $H^L$ not contained in any proper Levi subgroup of $G^L$. 

Endoscopic data for character varieties arise in the following manner. Let $\rho\in\Rep^{\heartsuit}(\Pi,G^L)$ be an irreducible representation and let $s$ be an element of the stabiliser group $\Stab_{G^L}\rho$. It is known that $\Stab_{G^L}\rho$ contains $Z_{G^L}$ as a finite index subgroup; thus, $s$ is semi-simple. That $\Stab_{G^L}\rho$ is larger than $Z_{G^L}$ is equivalent to $\rho$ factoring through some proper subgroup $H^L=C_{G^L}(s)$. The irreducibility of $\rho$ implies that $H^L$ is not contained in any proper parabolic subgroup of $G^L$, since $\Ima\rho\subset H^L$ (this is just another way to say that $s$ stabilises $\rho$). For any homomorphism $\omega:\Pi\rightarrow H^L/\hat{H}$, we obtain an elliptic endoscopic datum $(s,\omega)$. Elements like $s\in G^L$ with these properties have a name: a semi-simple element $s\in G^L$ is called \textit{quasi-isolated} if $C_{G^L}(s)$ is not contained in any proper Levi subgroup of $G^L$. Fortunately, we have a complete classification of quasi-isolated elements thanks to the work of Bonnaf\'e \cite{Bon}, where their centralisers are also explicitly described. 

The above circle of ideas reduce the study of the singularities of $\Ch^{\heartsuit}(\Pi,G^L)$ to the study of elliptic endoscopic strata $\Ch^{\heartsuit}_{(s,\omega)}(\Pi,G^L)$. The latter are themselves images of character varieties associated to $H^L$. However, there is one caveat: the centraliser $H^L$ is not necessarily connected. In view of this, it is inevitable to consider character varieties with values in nonconnected reductive groups, even if we are only interested in those with values in connected reductive groups. A good part of this article will be devoted to rewrite the basics of character varieties in this generality, and prove a dimension formula. These character varieties also fit into Boalch-Yamakawa's theory of twisted character varieties in \cite{BY}, where the nonconnectedness of target groups is interpreted as a global twist of structure groups (the local twist of Stokes data in \textit{op. cit.} plays no role in our setting). We mention by the way that in a series of earlier works \cite{Shu1}, \cite{Shu2} and \cite{Shu3}, the author studied the unitary case, where the target group in question had two connected components, and generic monodromy conditions were imposed to make the character varieties smooth. The end product was a potential connection between their mixed Hodge polynomials and wreath Macdonald polynomials.

\subsection*{The case $g>1$}\hfill

As in \cite{BS}, Flenner's theorem \cite{Fle} reduces proving that character varieties have symplectic singularities to proving that their singular loci have codimension at least four. The constructions in the previous paragraph will show that this is indeed the case, leading to the proofs of the following results.

\begin{ThmA}(Theorem \ref{Thm-reductive-sympsing} and Theorem \ref{Thm-Q-fac})\label{Thm-A}
For any reductive group $G$ and $g>0$, the character variety $\Ch(\Pi,G)$ is (reduced and) normal and $\mathbb{Q}$-factorial and has symplectic singularities. 
\end{ThmA}
\begin{Cor}
For any $g$ and $G$ as above, the character variety $\Ch(\Pi,G)$ is rational Gorenstein.
\end{Cor}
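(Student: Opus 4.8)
The plan is to obtain this as a formal consequence of Theorem \ref{Thm-A} together with the general principle, going back to Beauville \cite{Beau}, that any variety with symplectic singularities has rational Gorenstein singularities. Since the property of being rational Gorenstein can be checked on each connected component, and Theorem \ref{Thm-A} applies to every connected component of $\Ch(\Pi,G)$, it suffices to treat a single component $X$, which by that theorem is normal and carries a symplectic form $\omega$ on its smooth locus $X^{\circ}$.

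First I would observe that $\omega$ trivialises the canonical sheaf of $X$. If $2n=\dim X$, then $\omega^{\wedge n}$ is a nowhere-vanishing section of $\omega_{X^{\circ}}$; since $X$ is normal, the complement $X\setminus X^{\circ}$ has codimension $\geq 2$, so this section extends to a trivialisation of $\omega_{X}$, which is in particular invertible. Next, fixing a resolution $f:Y\rightarrow X$, the defining property of symplectic singularities is that $f^{\ast}\omega$ extends to a regular $2$-form on $Y$; taking $n$-th wedge powers shows that $f^{\ast}(\omega^{\wedge n})$ extends to a section of $\omega_{Y}$, so the relative canonical divisor $K_{Y/X}$ is effective and $X$ has canonical singularities. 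Then $\omega_{Y}=f^{\ast}\omega_{X}(K_{Y/X})$ with $\omega_{X}$ invertible gives $f_{\ast}\omega_{Y}=\omega_{X}$ by the projection formula, while Grauert--Riemenschneider vanishing gives $R^{i}f_{\ast}\omega_{Y}=0$ for $i>0$; combined with the triviality of $\omega_{X}$ this upgrades the situation to $R^{i}f_{\ast}\mathcal{O}_{Y}=0$ for $i>0$ and $f_{\ast}\mathcal{O}_{Y}=\mathcal{O}_{X}$, i.e.\ $X$ has rational singularities (this is exactly \cite[Proposition~1.3]{Beau}). Rational singularities over $\mathbb{C}$ are Cohen--Macaulay, so $X$ is Cohen--Macaulay with invertible canonical sheaf, hence Gorenstein, and rational, as claimed.

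There is essentially no obstacle here beyond quoting Theorem \ref{Thm-A}: the corollary is a purely formal shadow of the symplectic structure. The one point worth flagging is that the argument genuinely uses the normality assertion of Theorem \ref{Thm-A} — it is what lets $\omega^{\wedge n}$ propagate across the singular locus and trivialise $\omega_{X}$ — so the corollary is not any cheaper than the normality half of Theorem \ref{Thm-A}.
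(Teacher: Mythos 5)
Your argument is correct and is essentially the paper's own: the corollary is obtained by combining Theorem \ref{Thm-A} with Beauville's result that symplectic singularities are rational Gorenstein (quoted in the paper as Fact \ref{Fact1}\,(4), i.e.\ \cite[Proposition 1.3]{Beau}), which you cite as well. Your additional unwinding of Beauville's proof (trivialising $\omega_X$ by $\omega^{\wedge n}$, canonicity, Grauert--Riemenschneider) is a fine sketch but not needed beyond the citation.
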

\begin{ThmA}(Theorem \ref{Thm-g>1-res})\label{Thm-B}
For any reductive group $G$ and $g>1$, the connected components of character varieties  that admit symplectic resolutions are precisely those found by Bellamy-Schedler.
\end{ThmA}
\begin{Rem}
After the appearance of this article, the author was informed by Gerald Schwarz that the same results were essentially obtained in \cite[\S 7]{HSS} (but some extra work is needed to prove Theorem \ref{Thm-B}). The method of Herbig-Schwarz-Seaton is local. They developed a method for determining when the zero fibre of a moment map is rational. The tangent cones of points with closed orbits in $\Rep(\Pi,G)$ were then shown to be the product of a vector space and a fibre of a moment map. It follows that $\Rep(\Pi,G)$ and $\Ch(\Pi,G)$ are rational Gorenstein. This allows them to apply a theorem of Namikawa \cite[Theorem 6]{Nam2} to deduce symplectic singularities. Our method gives a modular description of the singular loci; the singular points are representations with values in some particular subgroups. We deduce from this that the singular loci have codimension at least four and thus obtain other properties of singularities. This point of view will be useful in the study of mirror symmetry and cohomology of nonabelian Hodge spaces in the spirit of \cite{HT} and \cite{MS}. 
\end{Rem}

\begin{comment}
\begin{Rem}
It seems that there are some inaccuracies in the proof of \cite[Theorem 7.1 (4)]{HSS}, notably the codimension four statement and the factoriality of character varieties. It is claimed in \cite[\S 7.3]{HSS} that the singularities of $\Ch(\Pi,G)$ are in codimension four because the singularities of $\Rep(\Pi,G)$ are in codimension four. However, for a general semi-simple group (e.g. $\PGL_n$), there are smooth points of $\Rep(\Pi,G)$ with finite stabiliser groups, and they become singular points in the quotient according to \cite[\S 7.2]{HSS}. The estimate of the dimension of the locus of these singular points is precisely what we achieve using endoscopy groups. We also provide full details for the proof of $\mathbb{Q}$-factoriality; however, we are not able to verify the (local) factoriality asserted in \cite[\S 7.3]{HSS} (see Remark \ref{Rem-nonfac-1} and Remark \ref{Rem-nonfac-2} for some specific reasons).
\end{Rem}
\end{comment}

\begin{Rem}
The singularities of character varieties for punctured Riemann surfaces were considered in  the work of Lawton-Manon \cite{LM} and Gu\'erin-Lawton-Ramras \cite{GLR}. Orbifold singularities were analysed by using Borel-de Siebenthal subgroups.
\end{Rem}
Reducedness and normality are inherent in the statement that $\Ch(\Pi,G)$ has symplectic singularities. We make it a separate statement because it has long been expected to be true but does not seem to have been treated anywhere. The case of $g=1$ is a recent hard theorem of Li-Nadler-Yun \cite{LNY}. The case of $g>1$ and $\GL_n$ was a result of Simpson \cite[Corollary 11.7]{Si}. The case of $g>1$ and other type $A$ groups follows from the case of $\GL_n$ as explained in Bellamy-Schedler \cite{BS}. Our strategy for the remaining cases follows those of Simpson and Bellamy-Schedler, which rely on computing the dimensions of the singular loci (but our method works uniformly for all connected components of a character variety). It turns out that these dimension computations also show that character varieties have symplectic singularities. Factoriality follows from a result of Popov, once we show that the representation variety is a factorial complete intersection. In order to show that there is no symplectic resolution beyond the cases considered in \cite{BS}, we need to know whether character varieties have terminal singularities. Indeed, a $\mathbb{Q}$-factorial terminal singularity does not admit any symplectic resolution (see \cite[Corollary 1.3]{Fu} or the proof of \cite[Theorem 6.13]{BS0}). A necessary and sufficient condition for a connected component of $\Ch(\Pi,G)$ with $g>1$ to have terminal singularities will be given in Theorem \ref{Thm-terminal}.

\subsection*{The case $g=1$}\hfill

As mentioned above, the connected components of character varieties for $g=1$ are of the form $(T_z\times T_z)/W_z$ for some tori and Weyl groups indexed by $z\in\pi_1(G)$. The theorem below determines which of them admit symplectic resolutions, assuming $G$ to be almost simple (i.e, semi-simple with connected Dynkin diagram). Despite the complexity of its statement, there are in fact only two models for which symplectic resolutions exist: 
\begin{itemize}
\item[(1)] $W_z\cong\mathfrak{S}_n$ for some $n$ and acts on $T_z\cong\{(t_i)_i\in(\mathbb{C}^{\ast})^n\mid \prod_it_i=1\}$ by permuting the factors.
\item[(2)] $W_z\cong(\mathbb{Z}/2)^n\rtimes\mathfrak{S}_n$ for some $n$ and acts on $T_z\cong(\mathbb{C}^{\ast})^n$ in the standard manner (i.e., each $\mathbb{Z}/2$ acts on a component $\mathbb{C}^{\ast}$ by inversion, and $\mathfrak{S}_n$ permutes the factors).
\end{itemize}
We will use some results of Borel-Friedman-Morgan to identify those connected components that are equivalent to one of these two models, and show that in all other cases there are no symplectic resolutions. In many cases, the proof of the nonexistence of symplectic resolutions is achieved by finding a particular point whose formal neighbourhood does not admit a crepant resolution; this involves some Lie-theoretic techniques, notably when $G=\Spin_{2n+1}$.

\begin{ThmA}(Proposition \ref{Prop-g=1-resol} and \S \ref{subsec-iden-comp})\label{Thm-C}
Suppose that $g=1$ and $G$ is almost-simple. Then, the connected component $\Ch(\Pi,G)_z$ with $z\in\pi_1(G)$ admits a symplectic resolution precisely in the following cases:
\begin{itemize}
\item[(a)] $G\cong\SL_n/ Z$, $Z$ is a subgroup of the centre of $\SL_n$, and $z\in Z$ is an element of order $d$ such that either $n=2d$ or $z$ generates $Z$.
\item[(b)] $G\cong\SO_{2n+1}$.
\item[(c)] $G\cong\Sp_{2n}$ and $z=1$; $G\cong\PSp_{2n}$ and $z\ne 1$.
\item[(d1)] $G\cong \SO_{2n}$, and $z\ne 1$, with $n\ge 5$.
\item[(d2)] $G\cong\PSO_{2(2n+1)}$, $z$ generates $\pi_1(G)$, with $n\ge2$.
\item[(d3)] $G\cong \PSO_{4m}$, and $z$ does not lie in the fundamental group of $\SO_{4m}$, with $m\ge 3$.
\item[(d4)] $G\cong \Spin_8/Z$, $z\ne 1$, and $Z$ is any subgroup of $Z_{\tilde{G}}$ containing $z$, where $\tilde{G}=\Spin_8$.
\end{itemize}
\end{ThmA}
For example, if $z=1$, then (a) recovers the criterion of Bellamy-Schedler; if $G\cong\SO_{2n+1}$, then (b) says that both connected components admit symplectic resolutions. Note that if $G$ is of type $D$, then there are many cases where $W_z$ is of type $B$ or $C$ when $z\ne 1$, so that they are equivalent to the model (2) above. If we do not assume $G$ to be almost simple, then the statement will become much more complicated. For $G$ with only type A components, the situation was clarified in \cite{BS} by constructing explicit $\mathbb{Q}$-factorial terminalisations, but it is not clear how to do this when components of other types (e.g., $\Spin_{2n+1}$) are allowed. We could also try to find in each case some particular point whose formal neighbourhood does not admit symplectic resolutions, but this relies heavily on case-by-case computations. Because of this, we refrain from pursuing the most general statement.

\subsection*{Organisation of the article}\hfill

Section \ref{Pre} collects some basic notions and well known results that we will need. We recall in particular irreducible, completely reducible and parabolic subgroups of nonconnected groups. The criteria for determining singularities that we will need are parallel to those in \cite{BS}.

In Section \ref{sec-Rep-Ch}, we introduce character varieties in the generality of (nonconnected) linearly reductive groups, and explain how the study of character varieties with values in reductive groups can be reduced to those with values in almost simple groups.

In Section \ref{sec-DCh}, we prove a dimension formula in the generality of nonconnected groups, which will be used in determining the dimensions of elliptic endoscopic loci. Another fundamental property we prove in this section is that representation varieties are pure dimensional complete intersections. This does not seem to be known beyond the case of type A.

Section \ref{sec-DESing} begins with the estimates of the dimensions of the loci of reducible representations, which allow us to prove the normality and factoriality of representation varieties. A couple of cases require additional efforts, notably when the loci of reducible representations are too big. With the dimension formulae obtained in Section \ref{sec-DCh}, the estimates of the dimensions of the elliptic endoscopic loci almost come for free.

Section \ref{sec-Main} combines the results of previous sections to draw the conclusion that character varieties have symplectic singularities in the case $g>1$. 

Finally, Section \ref{sec-g=1} is devoted to the case $g=1$. We review many Lie-theoretic results due to Bonnaf\'e and Borel-Friedman-Morgan that are crucial in our arguments. A complete classification result is proved for almost simple groups.

There are two appendices. The result of Appendix \ref{sec-Exist-orb} is used in Section \ref{sec-Main} to show that certain $\mathbb{Q}$-factorial terminalisations that we construct are indeed singular. The result of Appendix \ref{sec-Drezet} is not used in the article, but we find it natural to include it here. More information can be found at the beginning of each appendix.

\subsection*{Acknowledgement}\hfill

I would like to thank Zhejiang University and Westlake University for providing wonderful research environments. I thank Gerald Schwarz bringing to my attention his joint work \cite{HSS} with Hans-Christian Herbig and Christopher Seaton and patiently explaining some of their results. I also thank Sean Lawton for suggesting the related works \cite{LM} and \cite{GLR}; the method used in \cite{LM} allowed me to improve an earlier result on factoriality. I thank Philip Boalch for suggesting his algebraic formulation of quasi-Hamiltonian geometry and the connection between his joint work with Daisuke Yamakawa and the character varieties with values in nonconnected groups. I thank Baohua Fu for answering some questions; some of the progress was made while visiting him at Morningside Center of Mathematics. This work is supported by the China Postdoctoral Science Foundation under Grant Numbers 2025M773090 and 2025T180845.

\numberwithin{equation}{subsection}
\section{Preliminaries}\label{Pre}
We will work over the complex numbers $\mathbb{C}$ throughout the article. In order to give a uniform proof of the dimension estimate of elliptic endoscopic loci, we need to rewrite many basics of character varieties in the generality of nonconnected groups. The key ingredient is a suitable notion of parabolic subgroups, which we recall in \S \ref{subsec-AG}. A review of some criteria for determining singularities will be found in \S \ref{subsec-Sing}.

\subsection{Algebraic groups}\label{subsec-AG}\hfill

A reductive group will be assumed to be connected. For any affine algebraic group $G$, we will denote by $G^{\circ}$ its connected component containing the identity, called the identity component. We say that $G$ is linearly reductive if $G^{\circ}$ is reductive. Since we are in characteristic zero, this is equivalent to the usual definition of linearly reductive group in terms of its representations. For any subset $X\subset G$, we write $N_G(X)=\{g\in G\mid gXg^{-1}=X\}$, called the normaliser of $X$, and write $C_G(X)=\{g\in G\mid gx=xg\text{ for any $x\in X$}\}$, called the centraliser of $X$. 

Let $G$ be a linearly reductive group. A closed subgroup $P\subset G$ is parabolic if and only if $P^{\circ}$ is a parabolic subgroup of $G^{\circ}$ (see \cite[Lemma 6.2.4]{Spr}). Given a parabolic subgroup $P^{\circ}\subset G^{\circ}$, its normaliser $N_G(P^{\circ})$ is the largest parabolic subgroup of $G$ that has $P^{\circ}$ as its identity component, while $P^{\circ}$ is the smallest such parabolic subgroup. In general, $N_G(P^{\circ})$ dose not meet all connected components of $G$. Defining parabolic subgroups in terms of cocharacters will prove to be more useful for us, although not all parabolic subgroups arise this way if $G$ is not connected. We recall this definition following \cite[\S 6]{BMR}. Let $\lambda:\mathbb{G}_m\rightarrow G$ be a cocharacter, which necessarily factors through $G^{\circ}$. The conjugation action of $G^{\circ}$ on $G$ induces an action of $\mathbb{G}_m$. If the orbit map $\mathbb{G}_m\rightarrow G$ sending $t$ to $\lambda(t)\cdot g$ extends to $\mathbb{A}^1\supset\mathbb{G}_m$, then we say that the limit $\lim_{t\to 0}\lambda(t)\cdot g$ exists. Define
$$
P_{\lambda}:=\{g\in G\mid \lim_{t\to 0}\lambda(t)\cdot g\text{ exists}\}.
$$
It is a parabolic subgroup of $G$. Since we are in characteristic zero, any affine algebraic group is the semi-direct product of its unipotent radical and a linearly reductive group called a Levi factor. The Levi factors of the parabolic subgroups will be called Levi subgroups. For the parabolic subgroup $P_{\lambda}$, the subgroup $L_{\lambda}:=C_G(\Ima\lambda)$ is a Levi factor so that $P_{\lambda}\cong U\rtimes L_{\lambda}$, where $U$ is the unipotent radical of $P_{\lambda}$. In characteristic zero, the unipotent radical is always connected, and thus $L_{\lambda}$ meets every connected component of $P_{\lambda}$.

Let $H\subset G$ be a closed subgroup. We say that $H$ is a \textit{$G$-irreducible subgroup} if it is not contained in any proper parabolic subgroup of the form $P_{\lambda}$. We say that $H$ is a \textit{$G$-completely reducible subgroup} if for any parabolic subgroup $P\subset G$ containing $H$, there is a Levi factor of $P$ containing $H$. In characteristic zero, a closed subgroup $H\subset G$ is completely reducible if and only if it is linearly reductive (see \cite[Theorem 3.1 and \S 6.3]{BMR}; it is easy to see that \textit{strong reductivity} in \textit{loc. cit.} is equivalent to linear reductivity in characteristic zero). In this article, we will be concerned with the reducibility of the closed subgroups of $G$ which are Zariski closures of the images of homomorphisms $\rho:\Pi\rightarrow G$, where $\Pi$ is a discrete group. We can often replace $G$ by a union of its connected components so that the image of $\rho$ meets all connected components of $G$.

\subsection{Singularities}\label{subsec-Sing}\hfill

The following definitions are due to Beauville \cite{Beau}. A complex algebraic variety $X$ is said to have \textit{symplectic singularities} if it is normal, there is a symplectic form $\omega$ on its smooth locus, and for any resolution of singularities $f:Y\rightarrow X$, the 2-form $f^{\ast}\omega$ extends to $Y$. If the 2-form $f^{\ast}\omega$ extends to a symplectic form on $Y$, then $f$ is called a \textit{symplectic resolution}. An abundance of tools are available for controlling the singularities. We list below the relevant results that we will need.
\begin{Fact}\label{Fact1}\hfill
\begin{itemize}
\item[(0)] Let $X$ be an affine scheme with an action of a reductive group $G$, both defined over $\mathbb{C}$. Then, the following properties are preserved under taking the affine GIT quotient: connectedness, irreducibility, reducedness and normality. (\cite[Chapter 0, \S 2]{FKM})
\item[(1)] Let $X$ be a noetherian affine scheme that is a complete intersection. Then, $X$ is normal if and only if it is regular in codimension $1$. (Serre's criterion, \cite[II, Theorem 8.22A]{Ha}, \cite[II, Theorem 8.23]{Ha}.)
\item[(2)] Let $X$ be a noetherian affine scheme that is a complete intersection. If $X$ is regular in codimension $\le 3$, then $X$ is locally factorial. (\cite[Expos\'e XI, Corollaire 3.14]{SGA2}, also \cite[Theorem 3.12]{KLS}.)

\item[(3)] Let $X$ be a normal affine variety over $\mathbb{C}$ and let $Y\rightarrow X$ be a resolution of singularities. Suppose that the singular locus of $X$ has codimension at least four. Then, the 2-forms on the regular locus of $X$ extend to regular 2-forms on $Y$. (\cite{Fle}.)

\item[(4)] Symplectic singularities are rational Gorenstein. (\cite[Proposition 1.3]{Beau}.)

\item[(5)] A symplectic singularity is terminal if and only if its singular locus has codimension at least four. (\cite{Nam}.)
\end{itemize}
\end{Fact}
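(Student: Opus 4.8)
The plan is to record these six items as consequences of standard results, treating them one at a time rather than giving a unified argument. For (0), the key point is that $\mathbb{C}[X]^G$ is a subring of $\mathbb{C}[X]$ and that $\pi\colon X\to X\ds G$ is surjective: connectedness and irreducibility of $X\ds G$ follow because the image of a connected (resp.\ irreducible) space under a continuous surjection retains the property; reducedness follows because a subring of a reduced ring is reduced; and normality follows because an element of $\mathrm{Frac}(\mathbb{C}[X]^G)$ integral over $\mathbb{C}[X]^G$ is integral over $\mathbb{C}[X]$, hence lies in $\mathbb{C}[X]$ by normality of $X$, and is then $G$-invariant, so lies in $\mathbb{C}[X]^G$. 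This is exactly the content of \cite[Chapter 0, \S 2]{FKM}.

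For (1) and (2) I would first note that a complete intersection is Cohen--Macaulay, hence satisfies Serre's condition $S_k$ for every $k$; in particular $S_2$ holds automatically, so Serre's criterion ($R_1+S_2$) gives (1) in the form of \cite[II, Theorems 8.22A and 8.23]{Ha}, and for (2) one invokes \cite[Expos\'e XI, Corollaire 3.14]{SGA2} (equivalently \cite[Theorem 3.12]{KLS}): a local complete intersection that is regular in codimension $\leq 3$ is parafactorial along its non-regular locus, so its local rings are unique factorisation domains.

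For (3) I would cite Flenner's extension theorem \cite{Fle}: on a normal variety, a reflexive $p$-form defined on the smooth locus extends to any resolution provided the singular locus has codimension $\geq p+2$; the case $p=2$ is precisely the codimension-four statement. Item (4) is Beauville's \cite[Proposition 1.3]{Beau}: a symplectic form on the smooth locus trivialises the canonical sheaf there, and the defining extension property of a symplectic singularity turns its pullback, together with the exceptional discrepancy divisors (which therefore acquire coefficients $\geq 0$), into a global section trivialising $\omega_Y$ on a resolution $Y$; hence $X$ is canonical Gorenstein, and such singularities are rational. Finally, (5) is Namikawa's theorem \cite{Nam}: the nontrivial implication produces, from a divisorial component of the singular locus, a prime exceptional divisor of discrepancy $0$ on a suitable partial resolution, contradicting terminality, while the converse combines (3) with the observation that the symplectic form leaves no room for discrepancy-zero exceptional divisors once the singular locus has codimension $\geq 4$.

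Since every item is cited, the only real work is bookkeeping: matching the precise depth and regularity hypotheses in (2), and the discrepancy computations in (4) and (5), to the situations (representation and character varieties, symplectic singularities) in which these facts will later be applied. The deepest inputs, and the ones I would quote most carefully, are Flenner's extension theorem underlying (3) and Namikawa's criterion (5); the remaining items are essentially formal once the complete-intersection hypothesis is in place.
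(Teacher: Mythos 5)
Your proposal matches the paper exactly: the paper offers no argument for this Fact beyond the citations themselves, and you cite the same references ([FKM], Serre's criterion, SGA2/[KLS], Flenner, Beauville, Namikawa) with accurate one-line sketches of the standard arguments behind each. Nothing is missing; the brief justifications you add (e.g.\ complete intersection $\Rightarrow$ Cohen--Macaulay $\Rightarrow S_2$ for item (1), and the invariant-integral-closure argument for item (0)) are correct and consistent with how the paper later uses these facts.
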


\section{Representation varieties and character varieties}\label{sec-Rep-Ch}

In this section, we will denote by $G$ a linearly reductive group. We define representation varieties $\Rep_{\omega,z}(\Pi,G)$ and character varieties $\Ch_{\omega,z}(\Pi,G)$ with values in $G$ that are twisted by some central elements of $G^{\circ}$. As in \cite{BS}, questions about $\Ch(\Pi,G)$ for a reductive group $G$ can be reduced to semi-simple groups and simply connected almost simple groups. We recall the reduction steps in \S \ref{subsec-DCharVar}. 

\subsection{Definitions}\hfill

For any integer $g>0$, denote by $\Pi$ the discrete group generated by $2g+1$ generators $\{\zeta\}\sqcup\{\alpha_i,\beta_i\}_{1\le i\le g}$ subject to one single relation $\zeta\prod_{i=1}^g[\alpha_i,\beta_i]=1$, where $[\alpha_i,\beta_i]=\alpha_i\beta_i\alpha_i^{-1}\beta_i^{-1}$ is the commutator. Then, $\Pi$ is isomorphic to a free group with $2g$ generators. Let $G$ be a not necessarily connected affine algebraic group. We denote by $\Gamma=G/G^{\circ}$ the component group of $G$. We will assume that $\Gamma$ is commutative. For any $\gamma\in\Gamma$, we will denote by $G_{\gamma}$ the corresponding connected component. For any homomorphism $\omega:\Pi\rightarrow\Gamma$ and any $z\in G^{\circ}$, we define the \textit{representation variety} $\Rep_{\omega,z}(\Pi,G)$ associated to $\Pi$, $G$, $z$ and $\omega$ as the fibre over $1\in G^{\circ}$ of the following morphism
\begingroup
\allowdisplaybreaks
\begin{align*}
\prod_{i=1}^gG_{\omega(\alpha_i)}\times G_{\omega(\beta_i)}&\longrightarrow G^{\circ}\\
(A_i,B_i)_i&\longmapsto z\prod_{i=1}^g[A_i,B_i].
\end{align*}
\endgroup
Note that for each $i$, we have $[A_i,B_i]\in G^{\circ}$ since $\Gamma$ is assumed to be commutative. The closed points of $\Rep_{\omega,z}(\Pi,G)$ parametrise homomorphisms $\Pi\rightarrow G$ with generators $\{\alpha_i,\beta_i\}_i$ mapped to the prescribed connected components and $\zeta$ mapped to $z$. These homomorphisms are thought of as representations of the fundamental group of a compact Riemann surface twisted by $z$, and $\Rep_{\omega,z}(\Pi,G)$ will be called a twisted representation variety if $z\ne 1$. When $\omega$ is the trivial homomorphism, this recovers the usual twisted representation variety associated to $G^{\circ}$. 

Now we require that $G$ is linearly reductive and that $z$ lies in $Z_{G^{\circ}}$. The \textit{character variety} associated to $\Pi$, $G$, $z$ and $\omega$ is the affine GIT quotient of $\Rep_{\omega,z}(\Pi,G)$ by $G^{\circ}$, for the conjugation action of $G^{\circ}$ on its connected components, and it will be denoted by $\Ch_{\omega,z}(\Pi,G)$. Again, if $z\ne 1$, then $\Ch_{\omega,z}(\Pi,G)$ is often called a twisted character variety. We will omit the subscript $\omega$ if $G$ is connected. However, a priori, we do not know that $\Rep_{\omega,z}(\Pi,G)$ or $\Ch_{\omega,z}(\Pi,G)$ are varieties (i.e. reduced schemes). This property will need to be verified.

Let $G_{\omega}\subset G$ be the subgroup with $G_{\omega}^{\circ}=G^{\circ}$ and $G_{\omega}/G^{\circ}=\Ima\omega$. If $\rho\in\Rep_{\omega,z}(\Pi,G)$ is represented by a tuple $(A_i,B_i)_i$, then we will denote by $\Ima\rho$ the Zariski closure in $G$ of the abstract subgroup generated by $\{z\}\cup\{A_i,B_i\mid 1\le i\le g\}$, so that $\Ima\rho$ is a closed subgroup of $G$. We say that $\rho$ is irreducible if $\Ima\rho$ is an irreducible subgroup of $G_{\omega}$, and that $\rho$ is completely reducible if $\Ima\rho$ is a completely reducible subgroup of $G_{\omega}$. These properties are closely related to the action of $G^{\circ}$. For any $\rho\in\Rep_{\omega,z}(\Pi,G)$, denote by $\Stab_{G^{\circ}}\rho$ the stabiliser of $\rho$ under the action of $G^{\circ}$. Then, we say that $\rho$ is \textit{stable} under the action of $G^{\circ}$ if its orbit is closed and $(\Stab_{G^{\circ}}\rho)^{\circ}=Z_{G_{\omega}}^{\circ}$. Note that in general $Z_{G_{\omega}}^{\circ}$ is not equal to $Z_{G^{\circ}}^{\circ}$. Indeed, the conjugation action of $G$ induces an action of $\Gamma$ on $Z_{G^{\circ}}$. It is easy to see that $Z_{G_{\omega}}^{\circ}=(Z_{G^{\circ}}^{\Ima\omega})^{\circ}$, where $Z_{G^{\circ}}^{\Ima\omega}$ is the subgroups of fixed points. When a homomorphism $\rho:\Pi\rightarrow G$ is clear from the context, we will denote by $X^{\Pi}$ the fixed point locus, where $X$ is a subset of $G^{\circ}$. With this notation, we have $(Z_{G^{\circ}}^{\Ima\omega})^{\circ}=(Z_{G^{\circ}}^{\Pi})^{\circ}$.
\begin{Prop}\label{Prop-st-irr}
Fix $\Pi$, $G$, $z\in Z_{G^{\circ}}$ and $\omega$ as above. Suppose that $\Rep_{\omega,z}(\Pi,G)$ is nonempty and let $\rho\in\Rep_{\omega,z}(\Pi,G)$. Then, the following assertions hold:
\begin{itemize}
\item[(i)] The $G^{\circ}$-orbit of $\rho$ is closed if and only if $\rho$ is completely reducible.
\item[(ii)] $\rho$ is stable if and only if $\rho$ is irreducible.
\end{itemize}   
\end{Prop}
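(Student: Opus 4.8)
The plan is to reduce both statements to the analogous facts for connected groups acting on representation varieties, which are classical consequences of the Kempf--Ness/Hilbert--Mumford picture and the theory of $G$-complete reducibility as developed in \cite{BMR}. Write $X=\Rep_{\omega,z}(\Pi,G)$ with the conjugation action of $G^{\circ}$, and fix $\rho\in X$ represented by a tuple $(A_i,B_i)_i$. Let $H=\Ima\rho$ be the Zariski closure of the abstract group generated by $\{z\}\cup\{A_i,B_i\}$; by construction $H$ is a closed subgroup of $G_{\omega}$ with $H\cdot G^{\circ}=G_{\omega}$ (since the $A_i,B_i$ hit every component of $G_{\omega}$), and the stabiliser of $\rho$ in $G^{\circ}$ is exactly $C_{G^{\circ}}(H)$. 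The key dictionary is: a cocharacter $\lambda\colon\mathbb{G}_m\to G^{\circ}$ has $\lim_{t\to0}\lambda(t)\cdot\rho$ existing in $X$ if and only if $\lim_{t\to0}\lambda(t)g\lambda(t)^{-1}$ exists for every $g\in\{A_i,B_i\}$, i.e. if and only if $H\subset P_{\lambda}$ (this uses that the limit condition is closed under products and inverses, and that taking limits is compatible with the group structure, so it suffices to check on generators). Moreover when the limit exists it equals the representation $\rho'$ whose images are $\lim_{t\to0}\lambda(t)A_i\lambda(t)^{-1}$, etc., which lands in the Levi factor $L_\lambda=C_{G}(\Ima\lambda)$ precisely when $H\subset L_\lambda$.

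For (i): if the $G^{\circ}$-orbit of $\rho$ is closed, then for any parabolic $P_\lambda\supset H$ the associated limit $\rho'$ is a point of $\overline{G^{\circ}\cdot\rho}=G^{\circ}\cdot\rho$, so $\rho'=h\cdot\rho$ for some $h\in G^{\circ}$; chasing through, $h^{-1}\lambda$ (or rather the cocharacter conjugated back) shows $H$ is conjugate into $L_\lambda\cap P_\lambda$, hence into a Levi factor of $P_\lambda$, so $H$ is $G$-completely reducible. Since $G$-complete reducibility for a parabolic of the form $P_\lambda$ propagates to all parabolics containing $H$ (every parabolic of $G$ has identity component a genuine parabolic of $G^{\circ}$, and one reduces to the $P_\lambda$ case inside $G^{\circ}$, cf.\ \S\ref{subsec-AG} and \cite[\S6]{BMR}), $\rho$ is completely reducible in our sense. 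Conversely, if $H$ is $G$-completely reducible and the orbit $G^{\circ}\cdot\rho$ is not closed, pick $\rho'$ in the closed orbit in $\overline{G^{\circ}\cdot\rho}$; by the Hilbert--Mumford criterion there is a cocharacter $\lambda$ of $G^{\circ}$ with $\lim_{t\to0}\lambda(t)\cdot\rho=\rho'$, hence $H\subset P_\lambda$ with $P_\lambda$ proper (properness because $\rho'\notin G^{\circ}\cdot\rho$). Complete reducibility gives $g\in G$ conjugating $H$ into a Levi factor of $P_\lambda$; then there is a cocharacter $\mu$ (a suitable $G^{\circ}$-conjugate of $\lambda$) with $H\subset L_\mu$, so the $\mu$-limit of $\rho$ is $\rho$ itself, yet $\rho$ and $\rho'$ lie in the same $L_\mu$-orbit closure — running the standard argument (Kempf, or \cite{BMR}) that a completely reducible subgroup has closed orbit, one gets that $G^{\circ}\cdot\rho$ was closed after all, a contradiction. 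I would phrase this last part via the equivalence ``$H$ $G$-completely reducible $\iff$ $G^{\circ}$-orbit of the tuple $(A_i,B_i)$ closed'', which is exactly \cite[Corollary 3.7]{BMR} applied to the point $(A_i,B_i)\in G_\omega^{2g}$ under the diagonal conjugation action of $G^{\circ}$, once one checks that the closed-subvariety $X\hookrightarrow G_\omega^{2g}$ is $G^{\circ}$-stable and cut out by a $G^{\circ}$-equivariant map so that orbit-closedness in $X$ and in $G_\omega^{2g}$ agree. That citation does most of the work.

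For (ii): assume $\rho$ stable, i.e. $G^{\circ}\cdot\rho$ closed and $(C_{G^{\circ}}(H))^{\circ}=Z_{G^{\circ}}^{\Pi\,\circ}=Z_{G_\omega}^{\circ}$. By (i), $H$ is $G$-completely reducible. If $H$ were contained in a proper $P_\lambda$, then by complete reducibility $H\subset L_\mu$ for some cocharacter $\mu$ with $P_\mu$ proper; but then $\Ima\mu\subset C_{G^{\circ}}(H)$, and $\Ima\mu\not\subset Z_{G^{\circ}}$ because $P_\mu$ is proper (a cocharacter whose image is central defines all of $G$), contradicting $(C_{G^{\circ}}(H))^{\circ}=Z_{G_\omega}^{\circ}$ — more precisely, one checks $Z_{G_\omega}^{\circ}=(Z_{G^{\circ}}^{\Pi})^{\circ}$ contains no noncentral-in-$G^{\circ}$ cocharacter image, since $Z_{G^{\circ}}^{\Pi}\subset Z_{G^{\circ}}$. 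Hence $H$ is $G_\omega$-irreducible, i.e. $\rho$ is irreducible. Conversely, if $\rho$ is irreducible then $H$ is $G_\omega$-irreducible, in particular $G$-completely reducible (a $G$-irreducible subgroup is $G$-cr), so by (i) the orbit is closed; and $(C_{G^{\circ}}(H))^{\circ}$ is a connected group normalised by $H$, whose image cocharacters therefore all centralise $H$ — if this identity component were larger than $Z_{G_\omega}^{\circ}$ it would contain a nontrivial torus not central in $G^{\circ}$, giving a noncentral cocharacter $\nu$ of $G^{\circ}$ with $H\subset L_\nu\subsetneq P_\nu\subsetneq G$, contradicting $G$-irreducibility; thus $(\Stab_{G^{\circ}}\rho)^{\circ}=Z_{G_\omega}^{\circ}$, i.e. $\rho$ is stable.

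The main obstacle is the nonconnectedness of $G$ (and of $H$): one must be careful that the cocharacters live in $G^{\circ}$ while the parabolics $P_\lambda$ of $G$ are genuinely nonconnected, and that $G$-complete reducibility is the correct notion bridging ``orbit closed under $G^{\circ}$'' and ``$H$ sits in a Levi''. The clean way around this is to invoke \cite[\S6, especially Corollary 3.7 and its consequences]{BMR} verbatim: Bate--Martin--R\"ohrle develop $G$-complete reducibility for arbitrary (possibly nonconnected) reductive $G$ precisely so that the orbit of a tuple of elements under $G^{\circ}$-conjugation is closed iff the generated subgroup is $G$-cr, and stable iff it is $G$-irreducible with centraliser of the expected dimension. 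The remaining work — translating between ``limit of $\lambda(t)\cdot\rho$ exists in $X$'' and ``$H\subset P_\lambda$'', and identifying $(\Stab_{G^{\circ}}\rho)^{\circ}$ with $(Z_{G^{\circ}}^{\Pi})^{\circ}$ in the irreducible case — is routine once the closed-subscheme $X\subset G_\omega^{2g}$ is recognised as $G^{\circ}$-stable with the same orbit closures.
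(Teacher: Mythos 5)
Your proposal is correct and follows essentially the same route as the paper: part (i) is delegated to the closed-orbit criterion for tuples under conjugation (the paper cites \cite[Theorem 3.6]{Ri88}, you cite its nonconnected formulation in \cite{BMR}, which is the same circle of results), and part (ii) is the same adaptation of Richardson's stability argument, trading cocharacters of the stabiliser's identity component against $R$-parabolics $P_\lambda$ and their Levi factors $L_\lambda=C_G(\Ima\lambda)$, with the ``$\Ima\rho$ meets all components of $G_\omega$'' observation doing the same work in both write-ups. The only soft spot, which the paper spells out and you merely assert, is why a stabiliser strictly larger than $Z_{G_\omega}^\circ$ must contain a torus whose image is not central in $G^\circ$; this follows exactly as in the paper from $C_{Z_{G^\circ}}(\Ima\rho)\subset Z_{G_\omega}$.
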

\begin{proof}
We may regard $\rho$ as a point of $G^{2g}$ and consider its orbit therein, since $\Rep_{\omega,z}(\Pi,G)$ is closed and $G^{\circ}$-invariant in $G^{2g}$; thus, part (i) is exactly \cite[Theorem 3.6]{Ri88}. Indeed, in characteristic zero, the subgroup $\Ima\rho\subset G$ is completely reducible if and only if it is linearly reductive (see \cite[\S 6.3]{BMR}). As for part (ii), the proof of \cite[Theorem 4.1]{Ri88} can be adapted to our situation. For the convenience of the reader, we give some details. We may assume that $\omega$ is surjective so that $G_{\omega}=G$ and $\Ima\rho$ meets all connected components of $G$. If $\rho$ is irreducible, then it is completely reducible, and so its orbit is closed. It suffices to prove that, when the orbit of $\rho$ is closed, $\rho$ is reducible if and only if $(\Stab_{G^{\circ}}\rho)^{\circ}$ strictly contains $Z_G^{\circ}$. Note that $\Stab_{G^{\circ}}\rho$ is linearly reductive. 

Suppose that $(\Stab_{G^{\circ}}\rho)^{\circ}$ strictly contains $Z_G^{\circ}$. Then a maximal torus $T_{\rho}$ of $(\Stab_{G^{\circ}}\rho)^{\circ}$ strictly contains $Z_G^{\circ}$. (There are two cases. If $(\Stab_{G^{\circ}}\rho)^{\circ}$ is a torus, then $T_{\rho}=(\Stab_{G^{\circ}}\rho)^{\circ}$, so there is nothing to check. Otherwise, the semi-simple rank of $(\Stab_{G^{\circ}}\rho)^{\circ}$ is positive. Note that $Z_G^{\circ}$ is contained in the centre of $(\Stab_{G^{\circ}}\rho)^{\circ}$, so its dimension is smaller than that of a maximal torus.) Let $\lambda:\mathbb{G}_m\rightarrow T_{\rho}$ be a cocharacter that does not factor through $Z_G^{\circ}$. Regarding $\lambda$ as a cocharacter of $G^{\circ}$, we obtain a corresponding parabolic subgroup $P_{\lambda}\subset G$ with Levi factor $L_{\lambda}$. Since the image of $\lambda$ is contained in $(\Stab_{G^{\circ}}\rho)^{\circ}$, we have $\Ima\rho\subset L_{\lambda}$; in particular, $P_{\lambda}$ meets all connected components of $G$. We need to show that $P_{\lambda}^{\circ}$ is a proper parabolic subgroup of $G^{\circ}$, so that $\rho$ is reducible. This amounts to showing that $\lambda$ does not factor through $Z_{G^{\circ}}^{\circ}$. Since $\Ima\rho$ meets all connected components of $G$, an element of $Z_{G^{\circ}}$ commuting with all elements of $\Ima\rho$ must commute with $G$; i.e., it lies in $Z_G$. We deduce that if $\lambda$ factors through $Z_{G^{\circ}}^{\circ}$, then it factors through $Z_G^{\circ}$. This contradicts our choice of $\lambda$.

Conversely, suppose that $\rho$ is reducible and completely reducible. There is a cocharacter $\lambda:\mathbb{G}_m\rightarrow G^{\circ}$ such that $\Ima\rho$ is contained in the Levi subgroup $L_{\lambda}$ (in particular, $\Ima\lambda\subset\Stab_{G^{\circ}}(\rho)$) and $L_{\lambda}$ is a proper subgroup of $G$. Since $\Ima\rho$ meets all connected components of $G$, $L_{\lambda}^{\circ}$ must be a proper Levi subgroup of $G^{\circ}$. We see that $\lambda$ does not factor through $Z_{G^{\circ}}$.
\end{proof}

Consider the conjugation action of $G$ on $G^{\circ}$. It induces an action on $\mathfrak{g}=\Lie G^{\circ}$, the Lie algebra of $G^{\circ}$. For any $g\in G$, we denote by $\Ad_g$ the induced automorphism of $\mathfrak{g}$.
\begin{Cor}\label{Cor-st-irr}
Suppose that $\rho\in\Rep_{\omega,z}(\Pi,G)$ is irreducible. Let $X\in\mathfrak{g}$ be such that, for every generator $\sigma\in\{\alpha_i,\beta_i\mid1\le i\le g\}\subset\Pi$, we have $\Ad_{\rho(\sigma)}X=X$. Then, we have $X\in \Lie Z_{G_{\omega}}^{\circ}$.
\end{Cor}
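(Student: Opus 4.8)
The plan is to recognise this corollary as the infinitesimal shadow of Proposition~\ref{Prop-st-irr}(ii): I would show that the hypothesis on $X$ says precisely that $X$ lies in $\Lie\Stab_{G^{\circ}}\rho$, and then use the proposition to identify $(\Stab_{G^{\circ}}\rho)^{\circ}$ with $Z_{G_{\omega}}^{\circ}$.

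For the first step, note that $\Pi$ is free on the $2g$ elements $\{\alpha_i,\beta_i\}$, the generator $\zeta$ being eliminated by the defining relation, so that $z=\rho(\zeta)$ already lies in the abstract subgroup $\langle A_i,B_i\rangle$. Hence the condition $\Ad_{\rho(\sigma)}X=X$ for every $\sigma\in\{\alpha_i,\beta_i\}$ propagates to $\Ad_gX=X$ for every $g$ in the abstract image $\langle A_i,B_i\rangle$, and then, the fixed locus of $X$ under the adjoint action being Zariski closed in $G^{\circ}$, to every $g\in\Ima\rho$; thus $X\in\mathfrak{g}^{\Ima\rho}$. On the other hand, writing $\rho=(A_i,B_i)_i$, the stabiliser of $\rho$ under the conjugation action of $G^{\circ}$ on $\prod_i G_{\omega(\alpha_i)}\times G_{\omega(\beta_i)}$---equivalently on the closed $G^{\circ}$-invariant subvariety $\Rep_{\omega,z}(\Pi,G)$---is the centraliser $C_{G^{\circ}}(\{A_i,B_i\}_i)=C_{G^{\circ}}(\Ima\rho)$. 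In characteristic zero stabiliser subgroup schemes are smooth, so $\Lie\Stab_{G^{\circ}}\rho$ is the kernel of the infinitesimal action $Y\mapsto(Y-\Ad_{A_i}Y,\,Y-\Ad_{B_i}Y)_i$ at $\rho$, namely $\bigcap_i\ker(\Ad_{A_i}-\mathrm{id})\cap\ker(\Ad_{B_i}-\mathrm{id})=\mathfrak{g}^{\Ima\rho}$. So the hypothesis on $X$ is equivalent to $X\in\Lie\Stab_{G^{\circ}}\rho=\Lie(\Stab_{G^{\circ}}\rho)^{\circ}$.

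Finally, since $\rho$ is irreducible, Proposition~\ref{Prop-st-irr}(ii) gives that $\rho$ is stable, which by definition means $(\Stab_{G^{\circ}}\rho)^{\circ}=Z_{G_{\omega}}^{\circ}$; combining with the previous paragraph yields $X\in\Lie Z_{G_{\omega}}^{\circ}$, as desired. I do not expect any real obstacle: the only point needing a word of justification is the identification of $\Lie\Stab_{G^{\circ}}\rho$ with the space of adjoint-invariant vectors, which rests on the smoothness of stabilisers in characteristic zero, and beyond that the argument is purely formal, with no genuinely new difficulty relative to the proof of the proposition. The statement is worth recording separately only because it is in this infinitesimal form---controlling invariant tangent vectors---that it will be applied.
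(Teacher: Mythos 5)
Your argument is correct and follows the same route as the paper: the paper likewise invokes Proposition \ref{Prop-st-irr}(ii) to identify the identity component of the fixed-point group $(G^{\circ})^{\Pi}=\Stab_{G^{\circ}}\rho$ with $Z_{G_{\omega}}^{\circ}$ and then passes to the tangent space at $1$. Your extra remarks (propagating invariance from the generators to $\Ima\rho$, and smoothness of stabilisers in characteristic zero identifying $\Lie\Stab_{G^{\circ}}\rho$ with the adjoint invariants) simply make explicit what the paper leaves implicit.
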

\begin{proof}
Proposition \ref{Prop-st-irr} says that the connected component of $(G^{\circ})^{\Pi}$ containing $1$ is precisely $Z_{G_{\omega}}^{\circ}$. Therefore, the assertion follows by passing to the tangent space at 1.
\end{proof}

We say that $\rho\in\Rep_{\omega,z}(\Pi,G)$ is \textit{reducible} if it is not irreducible, and that $\rho\in\Rep_{\omega,z}(\Pi,G)$ is \textit{strongly irreducible} if it is irreducible and $\Stab_{G^{\circ}}\rho=Z_{G_{\omega}}$. We will use the following notations in the rest of this article:
\begin{itemize}
\item $\Rep_{\omega,z}^{\diamondsuit}(\Pi,G)$, the open subset of strongly irreducible representations.
\item $\Rep_{\omega,z}^{\heartsuit}(\Pi,G)$, the open subset of irreducible representations;
\item $\Rep_{\omega,z}^{\spadesuit}(\Pi,G)$, the closed subset of reducible representations;
\item $\Rep_{\omega,z}^{\blacklozenge}(\Pi,G)$, the complement of $\Rep_{\omega,z}^{\diamondsuit}(\Pi,G)$ in $\Rep_{\omega,z}(\Pi,G)$.
\end{itemize}
The subset $\Rep_{\omega,z}^{\heartsuit}(\Pi,G)$ coincides with the subset of stable points by Proposition \ref{Prop-st-irr}, and is therefore open. The subset $\Rep_{\omega,z}^{\diamondsuit}(\Pi,G)$ is contained in $\Rep_{\omega,z}^{\heartsuit}(\Pi,G)$ and consists of points with the smallest possible stabiliser, and is also open. This is a standard application of Luna's \'etale slice theorem (see for example \cite[Proposition 5.5]{Dre2}). Now, the affine GIT quotient $\Rep_{\omega,z}(\Pi,G)\rightarrow\Ch_{\omega,z}(\Pi,G)$ restricts to a geometric quotient $\Rep_{\omega,z}^{\heartsuit}(\Pi,G)\rightarrow \Ch_{\omega,z}^{\heartsuit}(\Pi,G)$, where $\Ch_{\omega,z}^{\heartsuit}(\Pi,G)$ is open in $\Ch_{\omega,z}(\Pi,G)$. Since $\Rep_{\omega,z}^{\diamondsuit}(\Pi,G)$ is a saturated $G$-invariant open subset of $\Rep_{\omega,z}^{\heartsuit}(\Pi,G)$, its image $\Ch_{\omega,z}^{\diamondsuit}(\Pi,G)$ is open in $\Ch_{\omega,z}^{\heartsuit}(\Pi,G)$.

\subsection{Decompositions of character varieties}\label{subsec-DCharVar}\hfill

Any reductive group $G$ fits into an exact sequence
$$
1\longrightarrow K\longrightarrow Z_G^{\circ}\times G_1\longrightarrow G\longrightarrow 1,
$$
where $G_1$ is the derived subgroup of $G$ and $K\cong Z_G^{\circ}\cap G_1$ is finite and contained in the centre of $Z_G^{\circ}\times G_1$. Note that a reductive group is by definition connected.
Obviously, we have
\begingroup
\allowdisplaybreaks
\begin{align}
\Rep_{(1,z)}(\Pi,Z_G^{\circ}\times G_1)&\cong\Rep(\Pi,Z_G^{\circ})\times\Rep_z(\Pi,G_1) \text{ and}\\
\Ch_{(1,z)}(\Pi,Z_G^{\circ}\times G_1)&\cong\Ch(\Pi,Z_G^{\circ})\times\Ch_z(\Pi,G_1).
\end{align}
\endgroup
Suppose that $z\in Z_G\cap G_1$. Then, there are natural isomorphisms
\begin{equation}\label{eq-Rep-G-G1}
\Rep_z(\Pi,G)\cong\Rep_{(1,z)}(\Pi,Z_G^{\circ}\times G_1)/ K^{2g} \text{ and}
\end{equation}
\begin{equation}\label{eq-Ch-G-G1}
\Ch_z(\Pi,G)\cong\Ch_{(1,z)}(\Pi,Z_G^{\circ}\times G_1)/K^{2g},
\end{equation}
where $K^{2g}$ acts on $\Rep_{(1,z)}(\Pi,Z_G^{\circ}\times G_1)$ by multiplication on $(Z_G^{\circ}\times G_1)^{2g}$ componentwise, and the action is free. The induced action of $K^{2g}$ on $\Ch_{(1,z)}(\Pi,Z_G^{\circ}\times G_1)$ is also free because its action on $\Ch(\Pi,Z_G^{\circ})=\Rep(\Pi,Z_G^{\circ})$ is so.

\begin{Lem}\label{Lem-isom-preser-irr}
The isomorphisms (\ref{eq-Rep-G-G1}) and (\ref{eq-Ch-G-G1}) preserve the loci of irreducible representations and the loci of strongly irreducible representations. In particular, we have
$$
\codim_{\Rep_z(\Pi,G)} \Rep^{\spadesuit}_z(\Pi,G)=\codim_{\Rep_z(\Pi,G_1)}\Rep^{\spadesuit}_z(\Pi,G_1).
$$
\end{Lem}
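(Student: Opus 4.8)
The plan is to trace both defining conditions through the central isogeny
$q:\widetilde{G}:=Z_G^{\circ}\times G_1\to G$, $(z,h)\mapsto zh$, whose kernel $K$ is finite and lies in $Z_{\widetilde{G}}=Z_G^{\circ}\times Z_{G_1}$. Given $\widetilde{\rho}\in\Rep_{(1,z)}(\Pi,\widetilde{G})$, write $\widetilde{\rho}=(\rho_0,\rho_1)$ with $\rho_0\in\Rep(\Pi,Z_G^{\circ})$ and $\rho_1\in\Rep_z(\Pi,G_1)$, and let $\rho\in\Rep_z(\Pi,G)$ be its image under (\ref{eq-Rep-G-G1}). I will show that $\rho$ is irreducible (resp.\ strongly irreducible) if and only if $\rho_1$ is; the assertion about (\ref{eq-Rep-G-G1}) then follows, and (\ref{eq-Ch-G-G1}) and the codimension identity are obtained by descent.

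For irreducibility I would first record that, $q$ being a central isogeny and every parabolic subgroup containing the centre, $\bar P\mapsto q^{-1}(\bar P)$ is an inclusion- and properness-preserving bijection between the parabolic subgroups of $G$ and of $\widetilde{G}$, and that the parabolic subgroups of $\widetilde{G}=Z_G^{\circ}\times G_1$ are exactly the $Z_G^{\circ}\times P$ with $P$ parabolic in $G_1$. Since $q$ is finite, hence closed, one has $\Ima\rho=q(\Ima\widetilde{\rho})$, so $\Ima\rho$ lies in a proper parabolic of $G$ iff $\Ima\widetilde{\rho}$ lies in some $Z_G^{\circ}\times P$ with $P$ a proper parabolic of $G_1$; and the latter holds iff every generator of $\Ima\widetilde{\rho}$ has its $G_1$-component in $P$, i.e.\ iff $\Ima\rho_1\subseteq P$. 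Hence $\rho$ is reducible iff $\rho_1$ is, and, $\Rep(\Pi,Z_G^{\circ})$ being connected, $\Rep^{\spadesuit}_z(\Pi,G)$ is precisely the image of $\Rep(\Pi,Z_G^{\circ})\times\Rep^{\spadesuit}_z(\Pi,G_1)$.

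For strong irreducibility I would use that (\ref{eq-Rep-G-G1}) is equivariant for the conjugation action of $\widetilde{G}$, which factors through $q$ and acts on the right-hand side only via $G_1$ on the $\Rep_z(\Pi,G_1)$-factor; from the product shape, $\Stab_{\widetilde{G}}\widetilde{\rho}=Z_G^{\circ}\times\Stab_{G_1}\rho_1$. The key claim is $\Stab_{G^{\circ}}\rho=q(\Stab_{\widetilde{G}}\widetilde{\rho})$: the inclusion $\supseteq$ is clear, and for $\subseteq$ one lifts $g\in\Stab_{G^{\circ}}\rho$ to $\widetilde{g}\in\widetilde{G}$ and gets $\widetilde{g}\cdot\widetilde{\rho}=k\cdot\widetilde{\rho}$ for some $k\in K^{2g}$, where $\widetilde{g}\cdot$ is conjugation and $k\cdot$ is componentwise translation; comparing $Z_G^{\circ}$-components — untouched by conjugation — forces $k=1$, so $\widetilde{g}\in\Stab_{\widetilde{G}}\widetilde{\rho}$. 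Since $K\subseteq Z_{\widetilde{G}}\subseteq Z_G^{\circ}\times\Stab_{G_1}\rho_1$ and $q(Z_{\widetilde{G}})=Z_G$, the equality $\Stab_{G^{\circ}}\rho=Z_G$ is equivalent to $Z_G^{\circ}\times\Stab_{G_1}\rho_1=Z_G^{\circ}\times Z_{G_1}$, hence to $\Stab_{G_1}\rho_1=Z_{G_1}$; combined with the previous step this gives the strongly irreducible case. This stabiliser bookkeeping — keeping the conjugation action apart from the $K^{2g}$-translation action, and noticing that conjugation is inert on the $Z_G^{\circ}$-component — is the one point that needs care; everything else is formal.

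Finally, since irreducible representations are stable (Proposition \ref{Prop-st-irr}), $\Ch^{\heartsuit}_z$ and $\Ch^{\diamondsuit}_z$ are geometric quotients of the corresponding open subsets of $\Rep_z$, so (\ref{eq-Ch-G-G1}), being induced by (\ref{eq-Rep-G-G1}), preserves the irreducible and strongly irreducible loci as well. For the codimension identity, the quotient map $\Rep_{(1,z)}(\Pi,\widetilde{G})\to\Rep_z(\Pi,G)$ is finite \'etale (the $K^{2g}$-action is free), hence codimension-preserving, and by the second step the preimage of $\Rep^{\spadesuit}_z(\Pi,G)$ is $\Rep(\Pi,Z_G^{\circ})\times\Rep^{\spadesuit}_z(\Pi,G_1)$; as $\Rep(\Pi,Z_G^{\circ})\cong(Z_G^{\circ})^{2g}$ is irreducible, its product with a closed subset has the same codimension, which yields $\codim_{\Rep_z(\Pi,G)}\Rep^{\spadesuit}_z(\Pi,G)=\codim_{\Rep_z(\Pi,G_1)}\Rep^{\spadesuit}_z(\Pi,G_1)$.
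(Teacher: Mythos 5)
Your proposal is correct and follows essentially the same route as the paper: irreducibility is transferred through the isogeny $Z_G^{\circ}\times G_1\to G$ because parabolic subgroups contain the connected centre (the paper states this as "$g=zg_1$ lies in $P$ iff $g_1\in P\cap G_1$"), strong irreducibility reduces to the observation that the preimage of $Z_G$ is $Z_G^{\circ}\times Z_{G_1}$, and the codimension identity comes from the product decomposition $\Rep^{\spadesuit}_{(1,z)}(\Pi,Z_G^{\circ}\times G_1)\cong\Rep(\Pi,Z_G^{\circ})\times\Rep^{\spadesuit}_z(\Pi,G_1)$ together with the finite free $K^{2g}$-quotient. Your explicit stabiliser bookkeeping ($\Stab_{G}\rho=q(\Stab_{\widetilde{G}}\widetilde{\rho})$, separating conjugation from the $K^{2g}$-translation) is a correct filling-in of details the paper leaves implicit, not a different argument.
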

\begin{proof}
If $g=zg_1$ with $z\in Z_G^{\circ}$ and $g_1\in G_1$, then $g$ lies in a parabolic subgroup $P$ if and only if $g_1$ lies in the parabolic subgroup $P\cap G_1$ of $G_1$. The first assertion follows. For the second, note that the inverse image of $Z_G$ in $Z_G^{\circ}\times G_1$ is precisely $Z_G^{\circ}\times Z_{G_1}$. The codimension statement follows from
$$
\Rep_{(1,z)}^{\spadesuit}(\Pi,Z_G^{\circ}\times G_1)\cong\Rep(\Pi,Z_G^{\circ})\times\Rep_z^{\spadesuit}(\Pi,G_1).
$$
\end{proof}

The semi-simple group $G_1$ fits into the following exact sequence:
$$
1\longrightarrow Z\longrightarrow G_2\longrightarrow G_1\longrightarrow 1,
$$ 
where $G_2$ is a direct product of simply connected almost simple algebraic groups, and $Z\cong\pi_1(G_1)$ is contained in the centre of $G_2$. Note that a semi-simple group is by definition connected. According to \cite{Li}, the connected components of $\Rep(\Pi,G_1)$ are in bijection with $\pi_1(G_1)$. In fact, we can realise the connected components as finite quotients of $G_2$-representation varieties, which we explain below. The isogeny $G_2\rightarrow G_1$ induces an isomorphism $Z_{G_2}/Z\cong Z_{G_1}$. Let $z_2\in Z_{G_2}$ be an element mapping to $z_1\in Z_{G_1}$. Under the finite \'etale morphism $G_2^{2g}\rightarrow G_1^{2g}$, the twisted representation variety $\Rep_{z_2}(\Pi,G_2)$ is mapped to a connected component of $\Rep_{z_1}(\Pi,G_1)$. For $z_2\ne z_2'$ over $z_1$, the connected components thus obtained are distinct. (These assertions follow from the parametrisation of the connected components of $\Rep(\Pi,G_1/Z_{G_1})$ in terms of $Z_{G_2}$.) We will denote by $\Rep_{z_1}(\Pi,G_1)_{z_2}$ (resp. $\Ch_{z_1}(\Pi,G_1)_{z_2}$) the connected component of $\Rep_{z_1}(\Pi,G_1)$ (resp. $\Ch_{z_1}(\Pi,G_1)$) corresponding to $z_2$. To summarise, we have
\begin{Lem}\label{Lem-Ch-isog}
With the notations as above, there is an isomorphism
$$
\Rep_{z_1}(\Pi,G_1)_{z_2}\cong\Rep_{z_2}(\Pi,G_2)/Z^{2g},
$$
and it is compatible with the conjugation action of $G_1$, so that we have an isomorphism of the quotients:
$$
\Ch_{z_1}(\Pi,G_1)_{z_2}\cong\Ch_{z_2}(\Pi,G_2)/Z^{2g}.
$$
Moreover, both isomorphisms preserve the loci of irreducible representations.
\end{Lem}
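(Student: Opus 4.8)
The plan is to realise the finite \'etale cover $G_2^{2g}\to G_1^{2g}$ at the level of twisted representation varieties and then to descend to character varieties. Write $q\colon G_2\to G_1$ for the central isogeny, a torsor under $Z$ since we are in characteristic zero, and $p\colon G_2^{2g}\to G_1^{2g}$ for its $2g$-th power, a torsor under $Z^{2g}$. Introduce the ``relation morphisms'' $r_k\colon G_k^{2g}\to G_k$, $(A_i,B_i)_i\mapsto z_k\prod_{i=1}^g[A_i,B_i]$ for $k=1,2$, so that $\Rep_{z_k}(\Pi,G_k)=r_k^{-1}(1)$, and $r_1\circ p=q\circ r_2$ because $q$ is a homomorphism with $q(z_2)=z_1$. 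The decisive observation is that $r_2$ is invariant under the componentwise multiplication action of $Z^{2g}$ on $G_2^{2g}$, as $Z$ is central and hence the commutators $[A_i,B_i]$ are unaffected. It follows that an entire $p$-fibre over a point of $\Rep_{z_1}(\Pi,G_1)$ has a constant value under $r_2$, which necessarily lies in $q^{-1}(1)=Z$; therefore $p^{-1}(\Rep_{z_1}(\Pi,G_1))=r_2^{-1}(Z)$ decomposes into the clopen, $Z^{2g}$-stable subschemes $r_2^{-1}(\zeta)=\Rep_{\zeta^{-1}z_2}(\Pi,G_2)$, $\zeta\in Z$, and by the discussion preceding the statement (which imports the connectedness results of \cite{Li}) the piece $\Rep_{z_2}(\Pi,G_2)$ is the one carried by $p$ onto the connected component $\Rep_{z_1}(\Pi,G_1)_{z_2}$.

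With this set up, I would first check that the restriction $\phi\colon\Rep_{z_2}(\Pi,G_2)\to\Rep_{z_1}(\Pi,G_1)_{z_2}$ is a torsor under $Z^{2g}$: it is the base change of the finite \'etale morphism $p^{-1}(\Rep_{z_1}(\Pi,G_1))\to\Rep_{z_1}(\Pi,G_1)$ along the clopen immersion $\Rep_{z_1}(\Pi,G_1)_{z_2}\hookrightarrow\Rep_{z_1}(\Pi,G_1)$, whose preimage is exactly the single piece $\Rep_{z_2}(\Pi,G_2)$; the $Z^{2g}$-action is free because it is already free on $G_2^{2g}$; and each fibre of $\phi$ is a full $Z^{2g}$-orbit because the whole $p$-fibre over a point of $\Rep_{z_1}(\Pi,G_1)_{z_2}$ stays within that one piece. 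This gives the first isomorphism $\Rep_{z_1}(\Pi,G_1)_{z_2}\cong\Rep_{z_2}(\Pi,G_2)/Z^{2g}$. For the second, note that $\phi$ is equivariant for conjugation by $G_2$, hence by $G_1=G_2/Z$ since $Z\subset Z_{G_2}$ acts trivially, and that this action commutes with the $Z^{2g}$-action. As $G_2$ is connected and $Z$ acts trivially, $\Ch_{z_2}(\Pi,G_2)=\Rep_{z_2}(\Pi,G_2)\ds G_2=\Rep_{z_2}(\Pi,G_2)\ds G_1$; since forming the GIT quotient by the reductive group $G_1$ commutes with the quotient by the finite central group $Z^{2g}$, applying $(-)\ds G_1$ to the first isomorphism yields $\Ch_{z_1}(\Pi,G_1)_{z_2}\cong\Ch_{z_2}(\Pi,G_2)/Z^{2g}$, the left-hand side being the clopen piece of $\Ch_{z_1}(\Pi,G_1)=\Rep_{z_1}(\Pi,G_1)\ds G_1$ cut out by the corresponding decomposition of the invariant ring.

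Preservation of the irreducible loci is the easy part. Since $q$ is a central isogeny, $P\mapsto q(P)$ is a bijection between the parabolic subgroups of $G_2$ and of $G_1$, and $\Ima\phi(\rho_2)=q(\Ima\rho_2)$ because a finite morphism is closed and so commutes with taking Zariski closures; hence $\Ima\rho_2$ lies in a proper parabolic of $G_2$ if and only if $\Ima\phi(\rho_2)$ lies in a proper parabolic of $G_1$. Thus $\phi$ matches up the loci of irreducible representations, and since these loci are $G_1$-invariant and saturated, so does the induced isomorphism on character varieties.

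The only step that demands real attention, and the crux of the whole argument, is the clopen decomposition of $p^{-1}(\Rep_{z_1}(\Pi,G_1))$ together with the identification of the piece dominating the chosen component: it is precisely the $Z^{2g}$-invariance of the twist $z_2\prod_i[A_i,B_i]$ that confines an entire $p$-fibre to a single piece, thereby upgrading the finite \'etale cover $\phi$ to an honest torsor rather than merely an \'etale cover. The connectedness of the individual pieces is taken as input from \cite{Li} and not reproved.
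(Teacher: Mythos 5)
Your proposal is correct and follows essentially the same route as the paper: the paper deduces the lemma from the finite \'etale cover $G_2^{2g}\to G_1^{2g}$ and the parametrisation of connected components via \cite{Li}, exactly the ingredients you use, only stated more tersely (the lemma is presented as a summary of the preceding paragraph). Your write-up merely makes explicit the clopen decomposition of the preimage, the $Z^{2g}$-torsor property, the commutation of the GIT quotient with the finite quotient, and the parabolic correspondence for the irreducible loci, all of which the paper leaves implicit.
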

We deduce the following lemma for reductive groups.
\begin{Lem}\label{Lem-Ch-conn-comp}
The isomorphism (\ref{eq-Ch-G-G1}) induces a bijection between the connected components of $\Ch(\Pi,G)$ and those of $\Ch(\Pi,G_1)$. If we denote by $\Ch(\Pi,G)_z$ the connected component corresponding to $z\in Z$, then we have
$$
\Ch(\Pi,G)_z\cong\Ch(\Pi,Z_G^{\circ}\times G_1)_z/K^{2g}.
$$
\end{Lem}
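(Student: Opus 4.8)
Here is a proposed argument.

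The plan is to specialise the isomorphism (\ref{eq-Ch-G-G1}) to the untwisted case $z=1$ and to trace connected components through it. Combining it with the product decomposition $\Ch(\Pi,Z_G^{\circ}\times G_1)\cong\Ch(\Pi,Z_G^{\circ})\times\Ch(\Pi,G_1)$ gives a finite free quotient presentation
$$
\Ch(\Pi,G)\cong\bigl(\Ch(\Pi,Z_G^{\circ})\times\Ch(\Pi,G_1)\bigr)/K^{2g},
$$
the freeness being recorded already in the paragraph preceding Lemma \ref{Lem-isom-preser-irr}. First I would observe that $\Ch(\Pi,Z_G^{\circ})$ is connected: for the torus $T=Z_G^{\circ}$ every commutator in $T$ is trivial, so $\Rep(\Pi,T)\cong T^{2g}$ is irreducible, and by Fact \ref{Fact1}(0) its GIT quotient $\Ch(\Pi,T)$ is connected. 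Hence the connected components of $\Ch(\Pi,Z_G^{\circ}\times G_1)$ are exactly the subsets $\Ch(\Pi,Z_G^{\circ})\times\Ch(\Pi,G_1)_{z}$ for $z\in Z$, where $\Ch(\Pi,G_1)_{z}$ runs over the components of $\Ch(\Pi,G_1)$, which by \cite{Li} together with Lemma \ref{Lem-Ch-isog} are parametrised by $\pi_1(G_1)\cong Z$.

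The crucial point is that the free $K^{2g}$-action stabilises each of these components. Since the action respects the product $\Ch(\Pi,Z_G^{\circ})\times\Ch(\Pi,G_1)$, it suffices to check this on each factor; on the connected factor $\Ch(\Pi,Z_G^{\circ})$ there is nothing to prove. On the $\Ch(\Pi,G_1)$-factor, recall $K=\{(t,t^{-1})\mid t\in Z_G^{\circ}\cap G_1\}$, so $K^{2g}$ acts on $\Rep(\Pi,G_1)$ by componentwise translation by the central elements $t^{-1}\in Z_{G_1}$. Using the surjection $Z_{G_2}\to Z_{G_1}$, such a translation lifts, for each $z_2\in Z_{G_2}$, to a translation of $\Rep_{z_2}(\Pi,G_2)$ by central elements of $Z_{G_2}$; this lifted translation preserves $\Rep_{z_2}(\Pi,G_2)$ because multiplying the generators $\widetilde A_i,\widetilde B_i$ by central elements leaves every $[\widetilde A_i,\widetilde B_i]$, hence $\prod_i[\widetilde A_i,\widetilde B_i]$, unchanged. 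Consequently $K^{2g}$ sends the component $\Rep(\Pi,G_1)_{z_2}$ — the image of $\Rep_{z_2}(\Pi,G_2)$ under $G_2^{2g}\to G_1^{2g}$ — to itself, and the same then holds after passing to $\Ch(\Pi,G_1)$ and to $\Ch(\Pi,Z_G^{\circ}\times G_1)$.

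To conclude, a free finite-group action stabilising each connected component yields a quotient whose connected components are precisely the (connected) images of the original ones. Applied to the presentation above, the components of $\Ch(\Pi,G)$ are therefore in natural bijection, via (\ref{eq-Ch-G-G1}), with those of $\Ch(\Pi,Z_G^{\circ}\times G_1)$, and hence with those of $\Ch(\Pi,G_1)$, i.e.\ with $Z$; moreover the component $\Ch(\Pi,G)_z$ is the image of the $K^{2g}$-stable subset $\Ch(\Pi,Z_G^{\circ}\times G_1)_z=\Ch(\Pi,Z_G^{\circ})\times\Ch(\Pi,G_1)_z$, so $\Ch(\Pi,G)_z\cong\Ch(\Pi,Z_G^{\circ}\times G_1)_z/K^{2g}$. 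The only step that requires genuine care is the stability of the components of $\Rep(\Pi,G_1)$ under the translation action of $K^{2g}$; once the parametrisation of components by $\pi_1(G_1)$ and the lifting description of the action are in place, everything else is formal.
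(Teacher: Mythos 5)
Your argument is correct and follows essentially the same route as the paper: the whole point is that $K^{2g}$, viewed inside $Z_{G_1}^{2g}$, preserves each component $\Ch(\Pi,G_1)_z$, which you verify exactly as the paper does, by lifting to $Z_{G_2}^{2g}$ and noting that central translations leave the product of commutators $\prod_i[\tilde A_i,\tilde B_i]=z$ unchanged. The extra details you supply (connectedness of $\Ch(\Pi,Z_G^{\circ})$ and the formal passage to the quotient) are implicit in the paper's shorter proof.
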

\begin{proof}
We need to show that the action of $K^{2g}$ preserves $\Ch(\Pi,Z_G^{\circ})\times\Ch(\Pi,G_1)_z$, and it suffices to show that $K^{2g}$ preserves $\Ch(\Pi,G_1)_z$, regarding $K$ as a subgroup of $Z_{G_1}$. This amounts to the following obvious assertion. If $(\tilde{A}_i,\tilde{B}_i)_i\in G_2^{2g}$ satisfies $\prod_i[\tilde{A}_i,\tilde{B}_i]=z$, and $(\tilde{\lambda}_i,\tilde{\mu}_i)_i\in Z_{G_2}^{2g}$ is a lift of a tuple $(\lambda_i,\mu_i)_i\in K^{2g}$, then $\prod_i[\tilde{\lambda}_i\tilde{A}_i,\tilde{\mu}_i\tilde{B}_i]=z$.
\end{proof}

Symplectic structures are compatible with central isogenies.
\begin{Lem}\label{Lem-isog-symp}
Let $\bar{G}\rightarrow G$ be a central isogeny of reductive groups (i.e., a surjective group homomorphism with finite kernel contained in the centre $\bar{Z}$ of $\bar{G}$). We identify the Lie algebras of $\bar{G}$ and $G$ and fix a symmetic nondegenerate invariant bilinear form on this Lie algebra. Let $z\in\bar{Z}$ and suppose that $\Ch_z^{\heartsuit}(\Pi,\bar{G})$ is nonempty. Then, the symplectic structure on the smooth locus of $\Ch_z(\Pi,\bar{G})$ is invariant under the action of $\bar{Z}^{2g}$ and coincides with the pullback of the symplectic structure on the smooth locus of $\Ch(\Pi,G)$.
\end{Lem}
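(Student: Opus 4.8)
The plan is to reduce everything to a local statement about Goldman's symplectic form on the stable (or smooth) locus, where it has an explicit description in terms of group cohomology, and then to observe that this description is manifestly insensitive to a central isogeny. First I would recall the construction: for $\rho\in\Rep_z(\Pi,\bar G)$ which is stable (or more generally whose orbit is smooth and whose stabiliser has identity component $Z_{\bar G}^{\circ}$), the tangent space to $\Ch_z(\Pi,\bar G)$ at $[\rho]$ is identified with a parabolic/group-cohomology group $H^1(\Pi,\mathfrak{g}_\rho)$, where $\mathfrak g=\Lie\bar G=\Lie G$ carries the adjoint action through $\rho$, and Goldman's form on this space is the cup product $H^1\times H^1\to H^2(\Pi,\mathfrak g\otimes\mathfrak g)\to H^2(\Pi,\mathbb C)\cong\mathbb C$ built from the cup product in cohomology together with the fixed invariant bilinear form on $\mathfrak g$. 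Since $\bar G\to G$ is a central isogeny, $\Lie\bar G=\Lie G$ as a vector space \emph{and} as a $\Pi$-module (the two adjoint actions factor through $\bar G\to G\to\Aut(\mathfrak g)$ and agree), and the chosen bilinear form is the same on both sides; hence the two cup-product pairings literally coincide under the identification of tangent spaces. This gives equality of the two symplectic forms on the common smooth locus.

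The steps in order: (1) Fix the identification $\Lie\bar G=\Lie G=\mathfrak g$ and note that for any $\rho$ the composite $\Pi\xrightarrow{\rho}\bar G\xrightarrow{\Ad}\GL(\mathfrak g)$ equals $\Pi\to G\to\GL(\mathfrak g)$, so $\mathfrak g_\rho$ is the same $\Pi$-module whether computed over $\bar G$ or over its image in $G$. (2) Recall, e.g. from Goldman \cite{G1} (adapted to the twisted setting, which only shifts the relation defining $\Rep_z$ but not the deformation theory), that on $\Rep^{\heartsuit}_z$ the tangent space to the quotient at a stable $\rho$ is $H^1(\Pi,\mathfrak g_\rho)$ and that the symplectic form is the cup product twisted by the invariant form $\langle\,,\,\rangle$ on $\mathfrak g$, together with the canonical isomorphism $H^2(\Pi,\mathbb C)\cong\mathbb C$ coming from the fundamental class of $C$. (3) Observe that $\Rep_z^{\heartsuit}(\Pi,\bar G)\to\Rep_z(\Pi,G)$ maps onto a union of connected components of the target, inducing on $\Ch$ the quotient map by the free $\bar Z^{2g}$-action of Lemma \ref{Lem-isom-preser-irr}/Lemma \ref{Lem-Ch-isog} (more precisely, $\Ch_z(\Pi,\bar G)\cong$ a connected component of $\Ch_z(\Pi,G)$ modulo this finite group, after passing to simply connected covers if needed), and this map is étale on the stable loci, so it identifies tangent spaces compatibly with the $H^1$-description. (4) Combine (1)–(3): the two forms are pulled back from one another on the smooth locus, and $\bar Z^{2g}$-invariance of the form on $\Ch_z(\Pi,\bar G)$ is automatic since $\bar Z$ acts trivially on $\mathfrak g$ and hence trivially on each $H^1(\Pi,\mathfrak g_\rho)$ compatibly with the pairing. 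Finally, since a closed $2$-form agreeing with a given symplectic form on a dense open subset of the smooth locus agrees with it everywhere on the smooth locus (both varieties being normal with smooth locus of the expected dimension), the coincidence extends from the stable locus to the whole smooth locus.

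The main obstacle I expect is bookkeeping rather than conceptual: making the identification of $\Ch_z(\Pi,\bar G)$ with a quotient of a component of $\Ch(\Pi,G)$ completely precise in the generality stated (a central isogeny, not necessarily the universal cover), and checking that the Goldman form in the \emph{twisted} ($z\neq 1$) case still admits the cup-product description on the stable locus — this requires noting that twisting by a central $z$ changes the defining equation of $\Rep_z$ but leaves the first-order deformation complex, and hence the identification of the tangent space with $H^1(\Pi,\mathfrak g_\rho)$ and the pairing, unchanged. A secondary point is to ensure that the stable locus is dense in the smooth locus of $\Ch_z(\Pi,\bar G)$ (so that the extension argument in step (4) applies); this follows from $g>1$ together with the dimension estimates for $\Rep^{\spadesuit}$ and $\Rep^{\blacklozenge}$ that are established later in the paper, or can simply be invoked from the nonemptiness hypothesis plus standard facts about the smooth locus of these GIT quotients.
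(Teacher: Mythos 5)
Your proposal is correct and follows essentially the same route as the paper's proof: the paper likewise argues (via Goldman's construction, extended to the twisted case) that the centre of $\bar{G}$ acts trivially on the Lie algebra, so the adjoint $\Pi$-module, the cup-product pairing, and hence the symplectic form are unchanged under the isogeny and under the $\bar{Z}^{2g}$-action. The paper merely adds, as an alternative, a quasi-Hamiltonian argument via the Maurer--Cartan forms and the results of Alekseev--Malkin--Meinrenken, which your write-up does not need.
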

\begin{proof}
If $z=1$, then the assertions follow from the same arguments for \cite[Corollary 3.3]{BS}. Essentially, the centre of $\bar{G}$ acts trivially on its Lie algebra and so does not affect the symplectic structure. If $z\ne 1$, we may check that Goldman's construction of symplectic structures remains valid and thus the assertions hold in these cases for the same reason. An alternative approach is via quasi-Hamiltonian reduction (see \cite{AMM} in the case of compact groups and \cite{Boa07} for an algebraic formulation). The symplectic structures are built from the left invariant and right invariant Maurer-Cartan forms $\theta$ and $\bar{\theta}$ on $\bar{G}$. Multiplication by a central element leaves both $\theta$ and $\bar{\theta}$ invariant. It follows from \cite[Example 6.1]{AMM}, \cite[Theorem 6.1]{AMM} and \cite[Theorem 9.3]{AMM} that the defining 2-form of the quasi-Hamiltonian space $\bar{G}^{2g}$ is invariant under multiplication by $\bar{Z}^{2g}$. It follows that the symplectic structure of the reduction at $z$ is also invariant under $\bar{Z}^{2g}$.
\end{proof}

\section{Dimensions of character varieties}\label{sec-DCh}

In this section, we assume $g>1$ and compute the dimensions of representation varieties and character varieties. The key property that we will prove is that $\Rep_{z}(\Pi,G)$ is a pure dimensional complete intersection.
\subsection{Computation of tangent spaces}\hfill

Let $G$ denote a linearly reductive group, and $\Gamma=G/G^{\circ}$ its component group, which we assume to be commutative. For any $\gamma\in\Gamma$, we will denote by $G_{\gamma}$ the corresponding connected component of $G$. Fix a homomorphism $\omega:\Pi\rightarrow\Gamma$ and $z\in Z_{G^{\circ}}$. We would like to compute the dimension of the character variety $\Ch_{\omega,z}(\Pi,G)$. Our arguments will follow that of \cite[\S 2.2]{HRV}, and the first step is to compute the dimensions of the tangent spaces at generic points of $\Rep_{\omega,z}(\Pi,G)$. The extra complication comes from the nontrivial action of $\Gamma$ on the centre of $G^{\circ}$. Recall that $Z_{G^{\circ}}^{\Pi}$ is the subgroup of the fixed points of $\Pi$, which only depends on $\omega$.

\begin{Prop}\label{Prop-dim-RepGamma}
Suppose that $\Rep^{\heartsuit}_{\omega,z}(\Pi,G)$ is nonempty. Then, we have
$$
\dim\Rep^{\heartsuit}_{\omega,z}(\Pi,G)=(2g-1)\dim G+\dim Z_{G^{\circ}}^{\Pi}.
$$
Moreover, $\Rep^{\heartsuit}_{\omega,z}(\Pi,G)$ is contained in the regular locus of $\Rep_{\omega,z}(\Pi,G)$.
\end{Prop}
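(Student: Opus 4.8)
The plan is to realise $\Rep_{\omega,z}(\Pi,G)$ as the scheme-theoretic fibre $\mu^{-1}(1)$ of the morphism
\[
\mu\colon M:=\prod_{i=1}^{g}G_{\omega(\alpha_i)}\times G_{\omega(\beta_i)}\longrightarrow G^{\circ},\qquad (A_i,B_i)_i\longmapsto z\prod_{i=1}^{g}[A_i,B_i],
\]
and to pin down the local dimension of this fibre at a point $\rho\in\Rep^{\heartsuit}_{\omega,z}(\Pi,G)$ by squeezing it between two matching bounds. Here $M$ is smooth and irreducible of dimension $2g\dim G$, each factor being a connected component of $G$. Since replacing $G$ by $G_{\omega}$ alters neither $\Rep_{\omega,z}(\Pi,G)$ nor $\dim G$ nor $Z_{G^{\circ}}^{\Pi}$, I may and will assume $\omega$ surjective, so that $\Gamma=\Ima\omega$.

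First I would bound $\dim_{\rho}\Rep_{\omega,z}(\Pi,G)$ from above by the dimension of the Zariski tangent space $T_{\rho}\Rep_{\omega,z}(\Pi,G)=\ker(d\mu_{\rho})$, which equals $2g\dim G-\rk(d\mu_{\rho})$. The tangent-space computation of Weil and Goldman \cite{G1}, in the form used in \cite[\S 2.2]{HRV} and insensitive both to the central twist $z\in Z_{G^{\circ}}$ (which acts trivially in the adjoint representation) and to the nonconnectedness of $G$, identifies the cokernel of $d\mu_{\rho}\colon T_{\rho}M\to T_{1}G^{\circ}=\mathfrak{g}$, via a $G$-invariant nondegenerate bilinear form on $\mathfrak{g}$, with $\mathfrak{g}^{\rho}:=\{X\in\mathfrak{g}\mid\Ad_{\rho(\sigma)}X=X\text{ for every generator }\sigma\}$. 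As $\rho$ is irreducible, Corollary \ref{Cor-st-irr} together with the obvious inclusion $\Lie Z_{G_{\omega}}^{\circ}\subseteq\mathfrak{g}^{\rho}$ gives $\mathfrak{g}^{\rho}=\Lie Z_{G_{\omega}}^{\circ}$, of dimension $\dim Z_{G^{\circ}}^{\Pi}$. Hence $\rk(d\mu_{\rho})=\dim G-\dim Z_{G^{\circ}}^{\Pi}$, so
\[
\dim T_{\rho}\Rep_{\omega,z}(\Pi,G)=(2g-1)\dim G+\dim Z_{G^{\circ}}^{\Pi}=:N,
\]
and in particular $\dim_{\rho}\Rep_{\omega,z}(\Pi,G)\le N$.

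For the reverse inequality, observe that $\prod_i[A_i,B_i]$ always lies in the commutator subgroup $[G,G]$, which is contained in $G^{\circ}$ because $\Gamma$ is abelian; hence $\mu(M)$ lies in a single coset of $[G,G]$, and since $1\in\mu(M)$ it lies in $[G,G]$ itself. A direct Lie-theoretic computation — projecting to the torus $G^{\circ}/(G^{\circ})_{\mathrm{der}}$, isolating the $\Gamma$-augmentation subtorus, and using $\dim Z_{G}=\dim Z_{G^{\circ}}^{\Gamma}=\dim Z_{G^{\circ}}^{\Pi}$ — shows that $\dim\overline{[G,G]}=\dim G-\dim Z_{G^{\circ}}^{\Pi}$. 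Because $M$ is irreducible, the theorem on dimensions of fibres gives
\[
\dim_{\rho}\Rep_{\omega,z}(\Pi,G)\ge\dim M-\dim\overline{[G,G]}=2g\dim G-\bigl(\dim G-\dim Z_{G^{\circ}}^{\Pi}\bigr)=N.
\]
Combining the two bounds, $\dim_{\rho}\Rep_{\omega,z}(\Pi,G)=N=\dim T_{\rho}\Rep_{\omega,z}(\Pi,G)$ for every $\rho\in\Rep^{\heartsuit}_{\omega,z}(\Pi,G)$, so $\Rep_{\omega,z}(\Pi,G)$ is smooth (equivalently, regular) at each such point; this yields at once that $\Rep^{\heartsuit}_{\omega,z}(\Pi,G)$ is contained in the regular locus and that each of its components has dimension $N$.

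I expect the main obstacle to be the cokernel computation in the second paragraph: one must check that the classical identification $\operatorname{coker}(d\mu_{\rho})\cong\mathfrak{g}^{\rho}$ via Poincaré duality survives the passage to a nonconnected target and the presence of a nontrivial twist $z$. The auxiliary equality $\dim\overline{[G,G]}=\dim G-\dim Z_{G^{\circ}}^{\Pi}$ is elementary but makes essential use of the commutativity of $\Gamma$.
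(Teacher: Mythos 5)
Your skeleton is the same sandwich argument as the paper's own proof: a lower bound on the local fibre dimension obtained by showing that $\mu$ factors through a subgroup of $G^{\circ}$ of dimension $\dim G-\dim Z_{G^{\circ}}^{\Pi}$, and a matching upper bound via the Zariski tangent space at an irreducible point, with irreducibility entering exactly through Corollary \ref{Cor-st-irr}; the two bounds together give both smoothness and the dimension formula, as in the paper. Your lower bound is a mild variant of the paper's: you route through the full commutator subgroup $[G,G]$, whereas the paper exhibits directly that $\Ima\mu\subset G'\cdot[Z_{G^{\circ}}^{\circ},\Gamma]$ (derived group of $G^{\circ}$ times the $\Gamma$-augmentation subtorus). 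Your count $\dim\overline{[G,G]}=\dim G-\dim Z_{G^{\circ}}^{\Pi}$ is correct, but note that it rests on the same torus statement (Lemma \ref{Lem-Tsig-[Tsig]}) together with the observation that, $\Gamma$ being abelian, the commutators of component representatives contribute only a finite group modulo $G'\cdot[Z_{G^{\circ}}^{\circ},\Gamma]$; so this is a repackaging rather than a shortcut.

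The gap is the step you yourself flag and then defer: the identification $\operatorname{coker}(d\mu_{\rho})\cong\mathfrak{g}^{\rho}$ is asserted by appeal to Weil--Goldman and \cite{HRV} with the remark that it is ``insensitive to nonconnectedness,'' but in this generality that identification \emph{is} the content of the proposition and occupies essentially the entire proof in the paper. The difficulty is that when $\Gamma=\Ima\omega$ acts nontrivially on $Z_{G^{\circ}}$, the centre $\mathfrak{z}_{\mathfrak{g}}$ is a nontrivial $\Pi$-summand of $\mathfrak{g}$ on which the Killing form is degenerate, so the classical duality cannot be quoted verbatim. One must either argue as the paper does --- split $d\mu$ into its derived and central parts, prove surjectivity onto $\Lie[Z_{G^{\circ}}^{\circ},\Gamma]$ on the central side via the torus lemma, and run the trace-form test-vector argument on $\mathfrak{g}'$ (checking along the way that differences such as $\tau_i(Z)-h_i^{-1}Zh_i$ lie in $\mathfrak{g}'$ so that nondegeneracy of the trace form applies), invoking Corollary \ref{Cor-st-irr} only at the end --- or else set up Poincaré duality for the honest surface group acting through $\Ad\circ\rho$ (legitimate since $\Ad_z=1$), using a nondegenerate pairing invariant under the full nonconnected adjoint action, e.g.\ obtained by averaging over $\Gamma$. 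Either route works, and your final paragraph correctly names the mechanism, but as written the proposal asserts rather than supplies this verification, which is where the real work lies.
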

\begin{proof}
We first show that $\dim\Rep_{\omega,z}(\Pi,G)\ge(2g-1)\dim G+\dim Z_{G^{\circ}}^{\Pi}$, then we compute the dimensions of the tangent spaces of $\Rep^{\heartsuit}_{\omega,z}(\Pi,G)$, which will be found to have the smallest possible value. In what follows, we will replace $G$ by $G_{\omega}$ and assume that $\omega$ is surjective; in particular, $Z_{G^{\circ}}^{\Pi}=Z_{G^{\circ}}^{\Gamma}$.

The estimate on $\dim\Rep_{\omega,z}(\Pi,G)$ will be achieved by showing that $\Rep_{\omega,z}(\Pi,G)$ is cut out by a limited number of equations. Recall the standard presentation of $\Pi$ with generators $\{\zeta\}\sqcup\{\alpha_i,\beta_i\}_{1\le i\le g}$ and one relation $\zeta\prod_{i=1}^g[\alpha_i,\beta_i]=1$. Let $(\sigma_i,\tau_i)_i\in (G)^{2g}$ be such that $z\prod_{i=1}^g[\sigma_i,\tau_i]=1$, and that $\sigma_i$ (resp. $\tau_i$) lies in the connected component $G_{\omega(\alpha_i)}$ (resp. $G_{\omega(\beta_i)}$). Any other $\rho\in\Rep_{\omega,z}(\Pi,G)$ can be written as $(A_i\sigma_i,B_i\tau_i)_i$ with $(A_i,B_i)_i\in(G^{\circ})^{2g}$ satisfying $z\prod_{i=1}^g[A_i\sigma_i,B_i\tau_i]=1$. We may rewrite
$$
[A_i\sigma_i,B_i\tau_i]=A_i\sigma_i(B_i)h_i\tau_i(A_i)^{-1}B_i^{-1},
$$
where $h_i:=[\sigma_i,\tau_i]$ and we regard $\sigma_i$ and $\tau_i$ as automorphisms of $G^{\circ}$ (e.g., $\sigma_i(B_i)=\sigma_iB_i\sigma_i^{-1}$). Note that $h_i$ lies in $G^{\circ}$ due to the assumption that $\Gamma$ is commutative. Consider the morphism
\begingroup
\allowdisplaybreaks
\begin{align*}
\mu:(G^{\circ}\times G^{\circ})^{g}&\longrightarrow G^{\circ}\\
(A_i,B_i)_i&\longmapsto z\prod_{i=1}^gA_i\sigma_i(B_i)h_i\tau_i(A_i)^{-1}B_i^{-1}.
\end{align*}
\endgroup
Note that $\Rep_{\omega,z}(\Pi,G)$ is isomorphic to $\mu^{-1}(1)$. We are going to show that $\Ima\mu$ is contained in a subgroup of $G^{\circ}$. Let $G'$ be the derived subgroup of $G^{\circ}$. The action of $G$ on $G^{\circ}$ preserves both $Z_{G^{\circ}}^{\circ}$ and $G'$. There is an isogeny $Z_{G^{\circ}}^{\circ}\times G'\rightarrow G^{\circ}$. Thus for each $i$ we may write $A_i=\lambda_iX_i$ and $B_i=\nu_iY_i$ with $(\lambda_i,\nu_i)_i\in (Z_{G^{\circ}}^{\circ})^{2g}$ and $(X_i,Y_i)_i\in(G')^{2g}$. Then
\begin{equation}\label{eq-Prop-dim-RepGamma-1}
\mu((A_i,B_i)_i)=z\prod_{i=1}^g\lambda_i\sigma_i(\nu_i)\tau_i(\lambda_i)^{-1}\nu_i^{-1}\prod_{i=1}^gX_i\sigma_i(Y_i)h_i\tau_i(X_i)^{-1}Y_i^{-1}.
\end{equation}
Let $[Z_{G^{\circ}}^{\circ},\Gamma]$ denote the subtorus of $Z^{\circ}_{G^{\circ}}$ generated by elements of the form $t\gamma(t)^{-1}$ with $t\in Z^{\circ}_{G^{\circ}}$ and $\gamma\in\Gamma$. Obviously,
\begin{equation}\label{eq-Prop-dim-RepGamma-2}
\prod_{i=1}^g\lambda_i\sigma_i(\nu_i)\tau_i(\lambda_i)^{-1}\nu_i^{-1}\in [Z_{G^{\circ}}^{\circ},\Gamma].
\end{equation}
We claim that 
$$
\prod_{i=1}^gX_i\sigma_i(Y_i)h_i\tau_i(X_i)^{-1}Y_i^{-1}\in z^{-1}G'.
$$
Indeed,
\begingroup
\allowdisplaybreaks
\begin{align*}
\prod_{i=1}^gX_i\sigma_i(Y_i)h_i\tau_i(X_i)^{-1}Y_i^{-1}&=\prod_{i=1}^gX_i\sigma_i(Y_i)h_i\tau_i(X_i)^{-1}Y_i^{-1}h_i^{-1}\cdots h_1^{-1}h_1\cdots h_i\\
&=\prod_{i=1}^g(h_1\cdots h_{i-1}X_i\sigma_i(Y_i)h_i\tau_i(X_i)^{-1}Y_i^{-1}h_i^{-1}h_{i-1}^{-1}\cdots h_1^{-1})\cdot\prod_{i=1}^gh_i.
\end{align*}
\endgroup
Now we have $\prod_{i=1}^gh_i=z^{-1}$ by the definition of the $h_i$'s, and $X_i\sigma_i(Y_i)h_i\tau_i(X_i)^{-1}Y_i^{-1}h_i^{-1}$ lies in $G'$. The claim follows. Consequently, the image of $\mu$ is contained in the subgroup of $G^{\circ}$ generated by $G'$ and $[Z_{G^{\circ}}^{\circ},\Gamma]$, which has dimension $\dim G-\dim (Z_{G^{\circ}}^{\circ})^{\Gamma}$ by Lemma \ref{Lem-Tsig-[Tsig]} below. Since $\Rep_{\omega,z}(\Pi,G)$ is a fibre of the morphism $\mu$ between smooth varieties, every irreducible component of $\Rep_{\omega,z}(\Pi,G)$ has dimension at least $(2g-1)\dim G+\dim (Z_{G^{\circ}}^{\circ})^{\Gamma}$.

Our next objective is to compute the differential of $\mu$ at $\rho\in\Rep_{\omega,z}(\Pi,G)$, where $\rho$ is represented by the tuple $(\sigma_i,\tau_i)_i$ as above (equivalently, the tuple $(A_i,B_i)_i=(1,1)_i\in(G^{\circ})^{2g}$). It turns out that, if $\rho$ is an irreducible representation, then  
\begin{equation}\label{eq-Prop-dim-RepGamma-Tang}
\dim T_{\rho}\Rep_{\omega,z}(\Pi,G)=(2g-1)\dim G+\dim Z_{G^{\circ}}^{\Gamma}.
\end{equation}
It follows that this is also the dimension of $\Rep^{\heartsuit}_{\omega,z}(\Pi,G)$.

Write $\mathfrak{g}'=\Lie G'$, $\mathfrak{z}_{\mathfrak{g}}=\Lie Z_{G^{\circ}}^{\circ}$ and $\mathfrak{z}_{\mathfrak{g}}'=\Lie [Z_{G^{\circ}}^{\circ},\Gamma]$. We have $\mathfrak{z}_{\mathfrak{g}}=\mathfrak{z}_{\mathfrak{g}}^{\Gamma}\oplus\mathfrak{z}_{\mathfrak{g}}'$ by Lemma \ref{Lem-Tsig-[Tsig]}, where $\mathfrak{z}_{\mathfrak{g}}^{\Gamma}$ is the $\Gamma$-invariant part. Observe that the differential $\ud\mu:\mathfrak{g}^{2g}\longrightarrow\mathfrak{g}$ factors through $\mathfrak{g}'\oplus\mathfrak{z}_{\mathfrak{g}}'\subset\mathfrak{g}$. Indeed, we have seen above that $\mu((G')^{2g})\subset G'$ and $\mu((Z_{G^{\circ}}^{\circ})^{2g})\subset Z_{G^{\circ}}^{\circ}$; therefore, $\ud\mu$ is the direct sum of the respective restrictions:
$$
\ud\mu':(\mathfrak{g}')^{2g}\longrightarrow\mathfrak{g}',\text{ and }\ud\mu_{\mathfrak{z}}:\mathfrak{z}_{\mathfrak{g}}^{2g}\longrightarrow\mathfrak{z}_{\mathfrak{g}}.
$$
By (\ref{eq-Prop-dim-RepGamma-2}), we have $\mu((Z_{G^{\circ}}^{\circ})^{2g})\subset[Z_{G^{\circ}}^{\circ},\Gamma]$, so $\ud\mu_{\mathfrak{z}}$ factors through $\mathfrak{z}_{\mathfrak{g}}'$. In fact, we have an equality $\mu((Z_{G^{\circ}}^{\circ})^{2g})=[Z_{G^{\circ}}^{\circ},\Gamma]$ since $\omega$ is assumed to be surjective. The morphism $\mu:(Z_{G^{\circ}}^{\circ})^{2g}\rightarrow[Z_{G^{\circ}}^{\circ},\Gamma]$ is smooth since it is a surjective group homomorphism between tori, and so $\ud\mu_{\mathfrak{z}}$ surjects onto $\mathfrak{z}_{\mathfrak{g}}'$. 

To establish (\ref{eq-Prop-dim-RepGamma-Tang}), it remains to show that $\ud\mu'$ is surjective. Let us give a formula for the differential $\ud\mu'$. The adjoint representation defines a homomorphism $\Ad:G'\rightarrow\GL(\mathfrak{g}')$ with $Z_{G'}$ being the kernel, and $\ud\Ad$ is injective at the identity. We then have a commutative diagram:
\begin{equation}\label{eq-Prop-dim-RepGamma-3}
\begin{tikzcd}[row sep=2.5em, column sep=2em]
(G')^{2g} \arrow[r, "\mu'"] \arrow[d, swap, "(\Ad)^{2g}"] & G' \arrow[d, "\Ad"]\\
\GL(\mathfrak{g}')^{2g} \arrow[r, "\mu_{\Ad}"'] & \GL(\mathfrak{g}'),
\end{tikzcd}
\end{equation}
where
$$
\mu_{\Ad}((A_i,B_i)_i)=\prod_{i=1}^gA_i\sigma_i(B_i)h_i\tau_i(A_i)^{-1}B_i^{-1}
$$
for any $(A_i,B_i)_i\in\GL(\mathfrak{g}')^{2g}$. Some clarifications are in order. For any $\sigma\in\Aut(G')$, we use the same notation $\sigma$ for the induced element of $\GL(\mathfrak{g}')$. For any $X\in\GL(\mathfrak{g}')$, we write $\sigma(X):=\sigma X\sigma^{-1}$. Besides, the elements $h_i\in G^{\circ}$ can also be regarded as elements of $\GL(\mathfrak{g}')$: find some $h_i'\in G'$ which differs from $h_i$ by an element of $Z_{G^{\circ}}^{\circ}$, then $h_i'$ induces an element of $\GL(\mathfrak{g}')$ which only depends on $h_i$. Since $\mu_{\Ad}$ is defined by matrix multiplication and inversion, the differential $\ud\mu_{\Ad}$ can be computed using the same formulae as in \cite[Theorem 2.2.5]{HRV}. For any $1\le i\le g$ and $(A,B)\in\GL(\mathfrak{g}')^2$, we write 
$$
[A,B]_i:=A\sigma_i(B)h_i\tau_i(A)^{-1}B^{-1}.
$$
Now we compute $\ud\mu_{\Ad}$ at $(A_i,B_i)_i$. For $(a_i,b_i)_i\in \End(\mathfrak{g}')^{2g}$, we have
\begingroup
\allowdisplaybreaks
\begin{align*}
&\ud\mu_{\Ad}((a_i,b_i)_i)\\
=&\sum_{i=1}^g\big([A_1,B_1]_1\cdots[A_{i-1},B_{i-1}]_{i-1}a_i\sigma_i(B_i)h_i\tau_i(A_i)^{-1}B_i^{-1}[A_{i+1},B_{i+1}]_{i+1}\cdots[A_g,B_g]_g\\
&-[A_1,B_1]_1\cdots[A_{i-1},B_{i-1}]_{i-1}A_i\sigma_i(B_i)h_i\tau_i(A_i)^{-1}\tau_i(a_i)\tau_i(A_i)^{-1}B_i^{-1}[A_{i+1},B_{i+1}]_{i+1}\cdots[A_g,B_g]_g\\
&+[A_1,B_1]_1\cdots[A_{i-1},B_{i-1}]_{i-1}A_i\sigma_i(b_i)h_i\tau_i(A_i)^{-1}B_i^{-1}[A_{i+1},B_{i+1}]_{i+1}\cdots[A_g,B_g]_g\\
&-[A_1,B_1]_1\cdots[A_{i-1},B_{i-1}]_{i-1}A_i\sigma_i(B_i)h_i\tau_i(A_i)^{-1}B_i^{-1}b_iB_i^{-1}[A_{i+1},B_{i+1}]_{i+1}\cdots[A_g,B_g]_g\big).
\end{align*}
\endgroup
Taking $(A_i,B_i)_i=(1,1)_i$ gives
\begingroup
\allowdisplaybreaks
\begin{align*}
\ud\mu_{\Ad}((a_i,b_i)_i)
=&\sum_{i=1}^g\big(h_1\cdots h_{i-1}a_ih_ih_{i+1}\cdots h_g-h_1\cdots h_{i-1}h_i\tau_i(a_i)h_{i+1}\cdots h_g\big)\\
&+\sum_{i=1}^g\big(h_1\cdots h_{i-1}\sigma_i(b_i)h_i h_{i+1}\cdots h_g-h_1\cdots h_{i-1}h_ib_i h_{i+1}\cdots h_g\big).
\end{align*}
\endgroup
For any $1\le i\le g$, define two linear maps from $\End(\mathfrak{g}')$ to itself as follows. For any $a\in\End(\mathfrak{g}')$, define
\begingroup
\allowdisplaybreaks
\begin{align*}
f_i(a)=&h_1\cdots h_{i-1}(a-h_i\tau_i(a)h_i^{-1})h_ih_{i+1}\cdots h_g\\
=&h_1\cdots h_{i-1}(a-h_i\tau_i(a)h_i^{-1})h_{i-1}^{-1}\cdots h_1^{-1},
\end{align*}
\endgroup
and for any $b\in\End(\mathfrak{g}')$, define
\begingroup
\allowdisplaybreaks
\begin{align*}
g_i(b)=&h_1\cdots h_{i-1}(\sigma_i(b)-h_ibh_i^{-1})h_i\cdots h_g\\
=&h_1\cdots h_{i-1}(\sigma_i(b)-h_ibh_i^{-1})h_{i-1}^{-1}\cdots h_1^{-1}.
\end{align*}
\endgroup
Our computation above shows that the image of $\ud\mu_{\Ad}$ is generated by the images of $f_i$ and $g_i$, $1\le i\le g$. Now we regard $\mathfrak{g}'$ as a subalgebra of $\End(\mathfrak{g}')$ via the adjoint representation. The commutativity of (\ref{eq-Prop-dim-RepGamma-3}) shows that $f_i(\mathfrak{g}')$ and $g_i(\mathfrak{g}')$ are contained in the subspace $\mathfrak{g}' \hookrightarrow\End(\mathfrak{g}')$. Take a test vector $Z\in\mathfrak{g}'$. We are going to show that if the Killing form of $Z$ with all element of $f_i(\mathfrak{g}')$ and $g_i(\mathfrak{g}')$ are zero, then $Z$ itself must be zero. This will imply that $\ud\mu'$ is surjective.

Suppose that for any $a\in\mathfrak{g}'$, we have $\Tr(Zf_i(a))=0$, where the trace is computed in $\End(\mathfrak{g}')$. We compute
\begingroup
\allowdisplaybreaks
\begin{align*}
\Tr(Zh_i\tau_i(a)h_i^{-1})=\Tr(h_i^{-1}Zh_i\tau_i(a)),
\end{align*}
\endgroup
and
$$
\Tr(Za)=\Tr(\tau_i\cdot Za\cdot\tau_i^{-1})=\Tr(\tau_i(Z)\tau_i(a)).
$$
Now, 
$$
\Tr(Zf_i(a))=\Tr\left(\left(\tau_i(Z)-h_i^{-1}Zh_i\right)\tau_i(a)\right)=0,\text{ for any $a\in\mathfrak{g}'$,}
$$
which implies $\tau_i(Z)=h_i^{-1}Zh_i$ (it follows from the fact $f_i(\mathfrak{g}')\subset\mathfrak{g}'$ that $\tau_i(Z)-h_i^{-1}Zh_i$ lies in $\mathfrak{g}'$; therefore, it is valid to apply the nondegeneracy of the Killing form).
That is, $Z$ commutes with $h_i\tau_i=\sigma_i\tau_i\sigma_i^{-1}$. 
A similar computation for $g_i$ shows that $Z$ commutes with $\sigma_i(h_i)^{-1}\sigma_i=\sigma_i\tau_i\sigma_i\tau_i^{-1}\sigma_i^{-1}$.
We deduce that $Z$ commutes with both $\sigma_i$ and $\tau_i$. By Corollary \ref{Cor-st-irr} and the assumption that $\rho$ is irreducible, we have $Z=0$; thus, $\ud\mu'$ is surjective.
\end{proof}

\begin{Lem}\label{Lem-Tsig-[Tsig]}
Let $T$ be a torus and let $\Gamma$ be a finite abelian group acting (as group automorphisms) on $T$. Denote by $[T,\Gamma]$ the subtorus of $T$ generated by elements of the form $t\gamma(t)^{-1}$ with $t\in T$ and $\gamma\in\Gamma$, and by $(T^{\Gamma})^{\circ}$ the identity component of the fixed point subgroup of $\Gamma$. Then, 
\begin{itemize}
\item[(i)] $T=(T^{\Gamma})^{\circ}\cdot[T,\Gamma]$ and,
\item[(ii)] $(T^{\Gamma})^{\circ}\cap[T,\Gamma]$ is finite.
\end{itemize} 
In particular, $\dim T=\dim T^{\Gamma}+\dim[T,\Gamma]$.
\end{Lem}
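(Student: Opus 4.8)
The plan is to prove everything directly at the level of the torus $T$ by means of the norm (averaging) endomorphism, with no recourse to cocharacter lattices. First I would record that $[T,\Gamma]$, defined as the subtorus generated by the elements $t\gamma(t)^{-1}$, is the same thing as the image of the homomorphism $\phi\colon\prod_{\gamma\in\Gamma}T\to T$, $(t_\gamma)_\gamma\mapsto\prod_{\gamma\in\Gamma}t_\gamma\gamma(t_\gamma)^{-1}$: this image is a subtorus (image of a torus under a group homomorphism), it contains every $t\gamma(t)^{-1}$, and any subtorus containing all the $t\gamma(t)^{-1}$ contains all their products, hence contains $\Ima\phi$. So $[T,\Gamma]=\Ima\phi$ is a well-defined subtorus. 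The key device is the homomorphism $q\colon T\to T$, $q(t)=\prod_{\gamma\in\Gamma}\gamma(t)$, which makes sense because $T$ is commutative.

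\textbf{Two elementary facts about $q$.} (a) For every $\delta\in\Gamma$ one has $\delta(q(t))=\prod_{\gamma}\delta\gamma(t)=q(t)$ after reindexing, so $q$ factors through $T^{\Gamma}$, and since $T$ is connected, through $(T^{\Gamma})^{\circ}$; moreover on $T^{\Gamma}$ the map $q$ is just $t\mapsto t^{|\Gamma|}$. (b) On a generator of $[T,\Gamma]$ we compute $q(t\gamma(t)^{-1})=q(t)\cdot\prod_{\delta}\delta\gamma(t)^{-1}=q(t)q(t)^{-1}=1$, again by reindexing $\delta\mapsto\delta\gamma$; since $q$ is a homomorphism and these elements generate $[T,\Gamma]$, the restriction $q|_{[T,\Gamma]}$ is trivial.

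\textbf{Proof of (i) and (ii).} For (i): given $t\in T$, use that a torus is divisible to write $t=s^{|\Gamma|}$ for some $s\in T$. Then
\[
t=s^{|\Gamma|}=q(s)\cdot\bigl(s^{|\Gamma|}q(s)^{-1}\bigr),\qquad s^{|\Gamma|}q(s)^{-1}=\prod_{\gamma\in\Gamma}\bigl(s\gamma(s)^{-1}\bigr),
\]
so $q(s)\in(T^{\Gamma})^{\circ}$ and $s^{|\Gamma|}q(s)^{-1}\in[T,\Gamma]$, whence $t\in(T^{\Gamma})^{\circ}\cdot[T,\Gamma]$. For (ii): if $t\in(T^{\Gamma})^{\circ}\cap[T,\Gamma]$, then by (a) $q(t)=t^{|\Gamma|}$ while by (b) $q(t)=1$, so $t^{|\Gamma|}=1$; thus the intersection is contained in the $|\Gamma|$-torsion subgroup of $T$, which is finite. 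The dimension statement is then formal: (i) gives a surjective homomorphism $(T^{\Gamma})^{\circ}\times[T,\Gamma]\twoheadrightarrow T$ whose kernel is isomorphic to $(T^{\Gamma})^{\circ}\cap[T,\Gamma]$, finite by (ii), so $\dim T=\dim (T^{\Gamma})^{\circ}+\dim[T,\Gamma]=\dim T^{\Gamma}+\dim[T,\Gamma]$.

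\textbf{Main obstacle.} Honestly there is no serious obstacle; the only things demanding a moment's care are the bookkeeping identifications at the outset — that $[T,\Gamma]$ is genuinely a subtorus (equal to $\Ima\phi$) so that $\dim[T,\Gamma]$ makes sense, and that $T$ is divisible — both of which are standard facts about tori. Once $q$ is introduced, parts (i), (ii) and the dimension count fall out of the two reindexing identities (a) and (b).
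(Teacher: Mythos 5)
Your proof is correct, and it takes a genuinely different route from the paper. The paper handles the cyclic case by citing Digne--Michel (\cite[Lemma 1.2 (iii)]{DM18} in its bibliography) and then runs an induction on the number of generators of $\Gamma$, carrying along a pair of subtori $T_0,T_1$ with $T=T_0\cdot T_1$ and finite intersection and verifying at each step that $(T_0^{\sigma})^{\circ}$ and $T_1\cdot[T_0,\sigma]$ again have these properties --- a slightly delicate bookkeeping argument (the finiteness of the new intersection is checked by passing to $T/T_1$). You instead introduce the norm endomorphism $q(t)=\prod_{\gamma\in\Gamma}\gamma(t)$ and read off both assertions from the two reindexing identities: $q$ lands in $(T^{\Gamma})^{\circ}$ and restricts to $t\mapsto t^{|\Gamma|}$ there, while it kills $[T,\Gamma]$; divisibility of $T$ then gives (i), and (ii) follows since any element of the intersection is $|\Gamma|$-torsion. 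Your preliminary identification $[T,\Gamma]=\Ima\phi$ and the homomorphism property of $q$ and $\phi$ (both using only commutativity of $T$) are correctly justified, so there is no gap. What your approach buys: it is self-contained (no external reference, no induction), and it never uses that $\Gamma$ is abelian --- only left/right translation on the index set --- so it proves the lemma for an arbitrary finite group acting on $T$, which is strictly more general than the statement; the paper's inductive scheme, by contrast, is tailored to a chosen generating set of an abelian $\Gamma$ and leans on the cited cyclic case as its base.
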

\begin{proof}
Let $\sigma\in\Gamma$, regarded as a finite order automorphism of $T$. The assertions are true if $\Gamma$ is generated by $\sigma$ according to \cite[Lemma 1.2 (iii)]{DM18}. We will prove by induction on the number of the generators of $\Gamma$. Suppose that $T_0$ and $T_1$ are subtori of $T$ satisfying: (1) $T=T_0\cdot T_1$; (2) $T_0\cap T_1$ is finite. Assume moreover that $\sigma\in\Gamma$ preserves both $T_0$ and $T_1$. Then we will show that $T_0':=(T_0^{\sigma})^{\circ}$ and $T_1':=T_1\cdot[T_0,\sigma]$ also satisfy (1) and (2). This will imply the desired result. Indeed, if $\{\sigma_1,\ldots,\sigma_k\}$ is a set of generators of $\Gamma$, then we let $T_0$ be the identity component of the subgroup of elements fixed by $\sigma_i$ with $1\le i\le k-1$, and $T_1$ the subgroup generated by $t\sigma_i(t)^{-1}$ with $t\in T$ and $1\le i\le k-1$. Now $(T_0^{\sigma_k})^{\circ}=(T^{\Gamma})^{\circ}$ and $T_1\cdot[T_0,\sigma_k]=T_1\cdot[T,\sigma_k]=[T,\Gamma]$, and so the lemma follows. Now we check the induction step. (1) follows from the equality $T_0=(T_0^{\sigma})^{\circ}\cdot[T_0,\sigma]$. To check (2) (i.e. $(T_0^{\sigma})^{\circ}\cap (T_1\cdot[T_0,\sigma])$ is finite), we consider the projection $T\rightarrow T/T_1\cong T_0/S$ where $S=T_0\cap T_1$. The images of $T_0^{\sigma}$ and $[T_0,\sigma]$ in $T_0/S$ has finite intersection. Let $tS$ (with $t\in T_0$) be an element in this intersection. Its preimage in $T$ is $tT_1$. Note that $tT_1\cap (T_0^{\sigma})^{\circ}$ is finite, and this completes the induction step.
\end{proof}

\subsection{First properties of representation varieties}\hfill

We will establish the first important property of representation varieties: being complete intersections. This is crucial for many other properties that will be studied later, such as reducedness, normality and factoriality. The key to establishing this property is a coarse estimate on the dimension of $\Rep^{\spadesuit}_{z}(\Pi,G)$, the locus of reducible representations. This estimate will be improved in later sections.
\begin{Prop}\label{Prop-dim-Rep}
Let $G$ be a reductive group and $z\in Z_G$. Suppose that $\Rep_z(\Pi,G)$ is nonempty. Then, the representation variety $\Rep_{z}(\Pi,G)$ is a pure dimensional complete intersection with 
$$
\dim\Rep_{z}(\Pi,G)=(2g-1)\dim G+\dim Z_G.
$$
Moreover, the open subset $\Rep^{\heartsuit}_{z}(\Pi,G)$ is dense.
\end{Prop}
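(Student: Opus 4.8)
The plan is to present $\Rep_z(\Pi,G)$ as a fibre of the commutator--product morphism, which yields a lower bound on the dimension of every component for free, and then to match it with an upper bound coming from a coarse estimate of the reducible locus; the complete intersection property and the density of $\Rep^{\heartsuit}_z(\Pi,G)$ will both fall out once the two bounds meet. Concretely, set $G'=[G,G]$, so $\dim G'=\dim G-\dim Z_G$, and consider $\mu\colon G^{2g}\to G'$, $(A_i,B_i)_i\mapsto z\prod_{i=1}^g[A_i,B_i]$; its image lands in $G'$ because commutators do and, $\Rep_z(\Pi,G)$ being nonempty, $z\in G'$. Since $\Rep_z(\Pi,G)=\mu^{-1}(1)$, every irreducible component has dimension at least $2g\dim G-\dim G'=(2g-1)\dim G+\dim Z_G=:N$. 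It therefore suffices to prove $\dim\Rep_z(\Pi,G)\leq N$: then $\Rep_z(\Pi,G)$ is pure of dimension $N=\dim G^{2g}-\dim G'$, and being cut out of the smooth (hence Cohen--Macaulay) variety $G^{2g}$ by $\dim G'$ equations realising the expected codimension, it is a complete intersection.

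For the upper bound I would split $\Rep_z(\Pi,G)=\Rep^{\heartsuit}_z(\Pi,G)\sqcup\Rep^{\spadesuit}_z(\Pi,G)$. Proposition~\ref{Prop-dim-RepGamma}, applied with trivial $\omega$ (so that $Z_{G^{\circ}}^{\Pi}=Z_G$), already gives $\dim\Rep^{\heartsuit}_z(\Pi,G)\leq N$, with equality whenever this locus is nonempty, and places it inside the regular locus. So everything reduces to the inequality $\dim\Rep^{\spadesuit}_z(\Pi,G)<N$. This strict inequality does double duty: it finishes the upper bound above, and, since $\Rep^{\spadesuit}_z(\Pi,G)$ is closed while $\Rep_z(\Pi,G)$ is pure of dimension $N$, it forces $\Rep^{\spadesuit}_z(\Pi,G)$ to contain no component, whence $\Rep^{\heartsuit}_z(\Pi,G)$ is a nonempty dense open subset.

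To bound $\Rep^{\spadesuit}_z(\Pi,G)$ I would induct on $\dim G$, the base case of a torus $T$ being immediate ($\Rep_z(\Pi,T)$ is $T^{2g}$ if $z=1$ and empty otherwise, and all its points are irreducible). Since any reducible $\rho$ has $\Ima\rho$ in a proper parabolic of the form $P_{\lambda}$, we have $\Rep^{\spadesuit}_z(\Pi,G)=\bigcup_P G\cdot\Rep_z(\Pi,P)$ over the finitely many conjugacy classes of proper parabolic subgroups $P=U\rtimes L$, each $\Rep_z(\Pi,P)$ being the evident closed subscheme of $P^{2g}$, and $\dim\bigl(G\cdot\Rep_z(\Pi,P)\bigr)\leq\dim(G/P)+\dim\Rep_z(\Pi,P)=\dim U+\dim\Rep_z(\Pi,P)$. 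Projecting $P\twoheadrightarrow L$ (note $z\in Z_G\subset L$) and filtering $U$ by its descending central series, the fibre of $\Rep_z(\Pi,P)\to\Rep_z(\Pi,L)$ over $\bar\rho$ is controlled, stage by stage, by the cocycle spaces $Z^1(\Pi,\mathfrak{u}_j)$ of the graded pieces $\mathfrak{u}_j$ of $\Lie U$ (as $\Pi$-modules through $\bar\rho$); the standard surface-group estimate $\dim Z^1(\Pi,M)=(2g-1)\dim M+\dim H^2(\Pi,M)$, together with Poincar\'e duality on $C$, bounds this fibre by $(2g-1)\dim U$ plus a correction that vanishes unless $\bar\rho$ fixes vectors in some $\mathfrak{u}_j$. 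Feeding in the inductive bound $\dim\Rep_z(\Pi,L)\leq(2g-1)\dim L+\dim Z_L$, together with $\dim G=\dim L+2\dim U$ and the root-theoretic inequality $\dim Z_L-\dim Z_G\leq\dim U$, gives $\dim\bigl(G\cdot\Rep_z(\Pi,P)\bigr)<N$ outright when $g>2$. For $g=2$ the crude count only gives $\leq N$, and the correction term must be taken into account: it is nonzero only when $\bar\rho$ kills a weight occurring in $U$, which confines $\bar\rho$ to a proper subgroup of $L$ and so drops $\dim\Rep_z(\Pi,L)$ below $(2g-1)\dim L+\dim Z_L$ by a compensating amount; stratifying $\Rep_z(\Pi,L)$ by this behaviour restores the strict inequality (the only configuration where the crude count stays non-strict is $G$ with an $A_1$-factor and $L$ its complementary Levi, which is precisely what this refinement handles).

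The substance, and the main obstacle, is this last estimate. Obtaining the bound $\leq N$, hence the complete intersection property, already requires careful bookkeeping of unipotent group cohomology against root data; obtaining the \emph{strict} inequality $<N$, which is what the density of $\Rep^{\heartsuit}_z(\Pi,G)$ needs, is genuinely delicate when $g=2$ and $G$ has an $A_1$-component, where the crude bound $\dim Z^1(\Pi,M)\leq 2g\dim M$ is insufficient and one must balance the cohomological fibre dimension against the size of the locus of $\bar\rho$ on which it is attained.
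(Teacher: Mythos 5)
Your skeleton coincides with the paper's: the lower bound on every component comes from presenting $\Rep_z(\Pi,G)$ as a fibre of $\mu\colon G^{2g}\to G'$, the upper bound comes from bounding the reducible locus by parabolic induction through the fibration $\Rep_z(\Pi,P)\to\Rep_z(\Pi,L)$ twisted by $G/P$, and density follows from the strict inequality $\dim\Rep^{\spadesuit}_z(\Pi,G)<N$. Two calibration remarks. First, the non-strict bound $\le N$ (hence purity and the complete intersection property) does not need your cohomological refinement: the crude fibre bound $2g\dim U$ together with the elementary inequality $\dim Z_L-\dim Z_G\le\dim U$ already gives it, which is exactly what the paper does. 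Second, the paper obtains strictness without any $Z^1/H^2$ bookkeeping: it first reduces to simply connected almost simple groups (Lemmas~\ref{Lem-isom-preser-irr} and~\ref{Lem-Ch-isog}), where the margin is $(2g-3)\dim U-\dim Z_L$, and then uses that $\dim U=\dim G/P\ge\rk G>\dim Z_L$ for non-Borel $P$ and $\dim U=\#\Phi^{+}>\rk G$ for the Borel once $\rk G\ge 2$; the single leftover case, type $A_1$, is exactly Simpson's \cite[Proposition 11.3]{Si}.

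The genuine gap is in your treatment of that leftover case, which is the only place where strictness is actually at stake (for every other configuration at $g=2$ the crude count is already strict, since $\dim U=\dim Z_L-\dim Z_G$ forces the removed simple roots to be isolated $A_1$-nodes). There, even your refined count with the full $H^2$-correction only returns $\le N$, and your argument rests on the assertion that when $\bar\rho$ has invariants on some graded piece (strictly speaking the correction $H^2(\Pi,\mathfrak{u}_j)\cong H^0(\Pi,\mathfrak{u}_j^{\ast})^{\ast}$ detects coinvariants, i.e.\ invariants on the dual, not on $\mathfrak{u}_j$ itself) it is confined to a proper subgroup of $L$ and the locus of such $\bar\rho$ in $\Rep_z(\Pi,L)$ drops in dimension ``by a compensating amount.'' That compensating estimate is precisely the nontrivial content and is nowhere established: the stabiliser of a vector in $\mathfrak{u}_j$ need not be reductive, the induction hypothesis gives no dimension bound for representations into it, and you acknowledge yourself that the $g=2$, $A_1$-factor case is ``genuinely delicate'' without supplying an argument. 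As it stands you get the complete intersection and pure-dimensionality statements, but not the density of $\Rep^{\heartsuit}_z(\Pi,G)$ when $g=2$ and $G$ has $\SL_2$-type factors. The gap can be closed either by proving that compensating codimension estimate for rank one directly, or, as the paper does, by reducing to almost simple factors and invoking Simpson's codimension bound for the rank-one case.
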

\begin{proof}
In the proof of Proposition \ref{Prop-dim-RepGamma}, we have seen that every irreducible component of $\Rep_{z}(\Pi,G)$ has dimension at least 
$$
\dim\Rep^{\heartsuit}_{z}(\Pi,G)=(2g-1)\dim G+\dim Z_G.
$$
We will show that $\Rep^{\spadesuit}_{z}(\Pi,G)$ has dimension strictly smaller than $\dim\Rep^{\heartsuit}_{z}(\Pi,G)$, and so no irreducible component of $\Rep_{z}(\Pi,G)$ is contained in $\Rep^{\spadesuit}_{z}(\Pi,G)$. Consequently, $\Rep^{\heartsuit}_{z}(\Pi,G)$ is open dense. Since $\Rep^{\heartsuit}_{z}(\Pi,G)$ is pure dimensional by Proposition \ref{Prop-dim-RepGamma}, the same is true for $\Rep_{z}(\Pi,G)$, and its dimension is equal to $\dim\Rep^{\heartsuit}_{z}(\Pi,G)$. However, $\Rep_{z}(\Pi,G)$ is defined by exactly $(\dim G-\dim Z_G)$ equations in the smooth variety $G^{2g}$, and thus is a complete intersection.

Now we give the estimate of $\codim\Rep^{\spadesuit}_{z}(\Pi,G)$. Note that $z$ necessarily lies in $Z_{G_1}$ if $\Rep_z(\Pi,G)$ is nonempty, where $G_1$ is the derived subgroup of $G$. By Lemma \ref{Lem-isom-preser-irr} and Lemma \ref{Lem-Ch-isog}, we may equally compute $\codim\Rep^{\spadesuit}_{\tilde{z}}(\Pi,G_2)$, where $G_2$ is the simply connected cover of the derived subgroup of $G$ and $\tilde{z}$ is any lift of $z$ in $Z_{G_2}$. Write $G_2$ as a direct product of almost simple groups $G_i$ and $\tilde{z}=(\tilde{z}_i)_i$. It suffices to show that for each $i$, we have $\codim\Rep^{\spadesuit}_{\tilde{z}_i}(\Pi,G_i)>0$. In what follows, we will assume $G$ to be simply connected almost simple and $\tilde{z}=z$, and prove by induction on the rank of $G$. 

If $G$ is of type $A_1$ and $z=1$, then the result follows from Simpson \cite[Proposition 11.3]{Si}. Note that $z=-1$ is generic for $\SL_2$, so that $\Rep^{\spadesuit}_{z}(\Pi,G)$ is empty in this case. Now suppose $\rk G>1$ and that $\codim\Rep^{\spadesuit}_{z'}(\Pi,H)>0$ for any simply connected almost simple groups $H$ with $\rk H<\rk G$ and any $z'\in Z_H$. By Lemma \ref{Lem-isom-preser-irr}, we have $\codim\Rep^{\spadesuit}_{z}(\Pi,L)>0$ for all proper Levi subgroups $L$ of $G$; in particular, 
$$
\dim\Rep_{z}(\Pi,L)=(2g-1)\dim L+\dim Z_{L}
$$ 
if it is nonempty. Let $P\subset G$ be a proper parabolic subgroup and let $P=U\rtimes L$ be a Levi decomposition (i.e., $U$ is the unipotent radical of $P$ and $L$ is a Levi factor). Any $p\in P$ can be written uniquely as $ul$ with $u\in U$ and $l\in L$. We will call $p=ul$ a Levi decomposition of $p$. Consider the morphism
\begingroup
\allowdisplaybreaks
\begin{align*}
\Rep_{z}(\Pi,P)&\longrightarrow\Rep_{z}(\Pi,L)\\
(p_i,q_i)_i&\longmapsto (l_i,m_i)_i,
\end{align*}
\endgroup
where $(p_i,q_i)_i\in P^{2g}$, and $p_i=u_il_i$ (resp. $q_i=v_im_i$) is the Levi decomposition of $p_i$ (resp. $q_i$). Obviously, this morphism is surjective (well-defined because $L$ normalises $U$ and $z$ lies in $L$). 
The dimension of every fibre cannot exceed $2g\dim U$. We see that
$$
\dim\Rep_{z}(\Pi,P)\le (2g-1)\dim L+2g\dim U+\dim Z_{L}.
$$
Denote by $(P)$ the set of parabolic subgroups that are conjugate to $P$. Let $\Rep_{z}(\Pi,G,(P))$ be the closed subvariety of $\Rep_{z}(\Pi,G)$ consisting of $\rho$ with $\Ima\rho$ contained in a conjugate of $P$. In other words, it is the image of the following proper morphism
$$
\{(P',\rho)\in G/P\times\Rep_{z}(\Pi,G)\mid \Ima\rho\subset P'\}\stackrel{\pr_2}{\longrightarrow}\Rep_{z}(\Pi,G).
$$
The domain of this morphism admits a projection onto $G/P$, with all fibres isomorphic to $\Rep_{z}(\Pi,P)$. We deduce that
$$
\dim\Rep_{z}(\Pi,G,(P))\le\dim\Rep_{z}(\Pi,P)+\dim U,
$$
and so 
\begingroup
\allowdisplaybreaks
\begin{align}\label{eq-Prop-dim-Rep}
&\dim\Rep^{\heartsuit}_{z}(\Pi,G)-\dim\Rep_{z}(\Pi,G,(P))\\
\ge&(2g-1)\dim G-(2g-1)\dim L-(2g+1)\dim U-\dim Z_{L}\nonumber\\
=&(2g-3)\dim U-\dim Z_{L}.\nonumber
\end{align}
\endgroup
Since $g>1$ by assumption, we have $2g-3\ge 1$. The codimension of $\Rep_{z}(\Pi,G,(P))$ is positive as long as $\dim U-\dim Z_{L}$ is positive. Since $G/P$ always contains $(\rk G)$-dimensional $T$-orbits under the action of an maximal torus $T$ of $G$ (this follows from \cite[Proposition 3.1]{Dab} and the Corollary that follows, where we need $G$ to have irreducible root system), we have $\dim U=\dim G/P\ge \rk G$. Now $\rk G$ is strictly larger than $\dim Z_L$ unless $P$ is a Borel. Suppose that $P$ is a Borel. Then, $\dim U$ is the number of positive roots of $G$, which is strictly larger than $\rk G$ unless the root system of $G$ is of type $A_1$. To summarise, we always have $\dim U>\dim Z_L$ if $\rk G>1$.
\end{proof}
In the proof of the above proposition, the inequality (\ref{eq-Prop-dim-Rep}) was based on the induction assumptions, which we now know to be true.
\begin{Cor}\label{Cor-dim-Rep-P}
Let $G$ be an almost simple group and $z\in Z_G$. Suppose that $\Rep_z(\Pi,G)$ is nonempty. Then, for any proper parabolic subgroup $P\subset G$ with Levi decomposition $P=U\rtimes L$, we have
$$
\codim\Rep_z(\Pi,G,(P))\ge(2g-3)\dim U-\dim Z_L.
$$
\end{Cor}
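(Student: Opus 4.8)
The plan is to extract the estimate already carried out inside the proof of Proposition \ref{Prop-dim-Rep}: the asserted bound is precisely inequality (\ref{eq-Prop-dim-Rep}) there, and it becomes an unconditional statement now that Proposition \ref{Prop-dim-Rep} is available as a theorem for all reductive groups. If $\Rep_z(\Pi,P)$ is empty then $\Rep_z(\Pi,G,(P))$ is empty and there is nothing to prove, so I would assume throughout that $\Rep_z(\Pi,P)$ is nonempty, and that $P$ is proper so that $U\neq\{1\}$.

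First I would record the dimension of $\Rep_z(\Pi,L)$ for the Levi factor $L$. Since $L$ is a connected reductive group, $z\in Z_G\subseteq Z_L$, and $\Rep_z(\Pi,L)$ is nonempty (it receives the surjective morphism of the proof of Proposition \ref{Prop-dim-Rep}, induced by $P\twoheadrightarrow L$, from $\Rep_z(\Pi,P)$), Proposition \ref{Prop-dim-Rep} applies directly to $(L,z)$ and gives $\dim\Rep_z(\Pi,L)=(2g-1)\dim L+\dim Z_L$. Next I would invoke the fibration arguments already written down: the morphism $\Rep_z(\Pi,P)\to\Rep_z(\Pi,L)$ is surjective with every fibre of dimension at most $2g\dim U$, whence $\dim\Rep_z(\Pi,P)\le(2g-1)\dim L+2g\dim U+\dim Z_L$; and $\Rep_z(\Pi,G,(P))$ is the image of the proper morphism $\pr_2\colon\{(P',\rho)\in G/P\times\Rep_z(\Pi,G)\mid\Ima\rho\subseteq P'\}\to\Rep_z(\Pi,G)$, whose source fibres over $G/P$ with fibres isomorphic to $\Rep_z(\Pi,P)$, so that $\dim\Rep_z(\Pi,G,(P))\le\dim\Rep_z(\Pi,P)+\dim U$.

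Finally I would combine these with $\dim\Rep_z(\Pi,G)=(2g-1)\dim G$ (valid because $Z_G$ is finite for $G$ almost simple, again by Proposition \ref{Prop-dim-Rep}) and the identity $\dim G=\dim L+2\dim U$, to obtain
\[
\codim\Rep_z(\Pi,G,(P))\ \ge\ (2g-1)\cdot 2\dim U-(2g+1)\dim U-\dim Z_L\ =\ (2g-3)\dim U-\dim Z_L .
\]
There is no genuine obstacle here; the only subtlety worth flagging is that, whereas in the proof of Proposition \ref{Prop-dim-Rep} the dimension formula for $\Rep_z(\Pi,L)$ came from the induction hypothesis on the rank, here it is legitimate to invoke it as an already-established instance of that proposition applied to the reductive group $L$, together with the harmless case distinction on whether $\Rep_z(\Pi,P)$ is empty.
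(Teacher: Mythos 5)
Your proposal is correct and follows essentially the same route as the paper: the paper's own proof of this corollary simply observes that the inequality (\ref{eq-Prop-dim-Rep}) established inside the proof of Proposition \ref{Prop-dim-Rep} is now unconditional, and your argument reproduces exactly that chain of estimates (surjection $\Rep_z(\Pi,P)\to\Rep_z(\Pi,L)$ with fibres of dimension at most $2g\dim U$, the incidence variety over $G/P$, and $\dim G=\dim L+2\dim U$). The only cosmetic difference is that you justify $\dim\Rep_z(\Pi,L)=(2g-1)\dim L+\dim Z_L$ by citing the now-proved proposition for the reductive group $L$ directly, whereas the paper phrases the passage to non-simply-connected almost simple $G$ via the simply connected cover; both are legitimate and amount to the same thing.
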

\begin{proof}
The statements for arbitrary almost simple groups follow from those for simply connected ones.
\end{proof}
\begin{Cor}\label{Cor-dim-Ch}
Let $G$ be a reductive group and $z\in Z_G$. Then, the character variety $\Ch_z(\Pi,G)$, if nonempty, is pure dimensional of dimension  $(2g-2)\dim G+2\dim Z_G$, and the open subset $\Ch^{\heartsuit}_z(\Pi,G)$ is dense.
\end{Cor}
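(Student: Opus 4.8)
The plan is to deduce everything from Proposition \ref{Prop-dim-Rep} together with the elementary properties of the GIT quotient $\pi\colon\Rep_z(\Pi,G)\to\Ch_z(\Pi,G)$. Since $G$ is connected reductive we have $\Ch_z(\Pi,G)=\Rep_z(\Pi,G)\ds G$, the relevant homomorphism $\omega$ is trivial, and $Z_{G_\omega}=Z_G$, so no nonconnected subtleties enter.

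First I would work on the irreducible locus. By Proposition \ref{Prop-dim-Rep}, $\Rep^{\heartsuit}_z(\Pi,G)$ is a dense open subset of $\Rep_z(\Pi,G)$, hence pure dimensional of dimension $(2g-1)\dim G+\dim Z_G$. By Proposition \ref{Prop-st-irr}(ii) every $\rho$ in this locus is stable, so its $G$-orbit is closed and $(\Stab_G\rho)^{\circ}=Z_G^{\circ}$; in particular all these orbits have dimension $\dim G-\dim Z_G$. Since the stable locus is saturated, $\pi$ restricts to a geometric quotient $\Rep^{\heartsuit}_z(\Pi,G)\to\Ch^{\heartsuit}_z(\Pi,G)$ whose fibres are exactly these orbits. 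Applying the fibre-dimension theorem to this morphism on each irreducible component of $\Rep^{\heartsuit}_z(\Pi,G)$ shows that $\Ch^{\heartsuit}_z(\Pi,G)$ is pure dimensional of dimension $(2g-1)\dim G+\dim Z_G-(\dim G-\dim Z_G)=(2g-2)\dim G+2\dim Z_G$.

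Next I would propagate this to all of $\Ch_z(\Pi,G)$. Because $\pi$ is surjective and $\Rep^{\heartsuit}_z(\Pi,G)$ is dense in $\Rep_z(\Pi,G)$, we get $\Ch_z(\Pi,G)=\pi\bigl(\overline{\Rep^{\heartsuit}_z(\Pi,G)}\bigr)\subseteq\overline{\pi(\Rep^{\heartsuit}_z(\Pi,G))}=\overline{\Ch^{\heartsuit}_z(\Pi,G)}$, so $\Ch^{\heartsuit}_z(\Pi,G)$ is dense in $\Ch_z(\Pi,G)$. Since it is moreover open, every irreducible component $Y$ of $\Ch_z(\Pi,G)$ meets it in a nonempty open subset, which is then an irreducible component of $\Ch^{\heartsuit}_z(\Pi,G)$ and hence has dimension $(2g-2)\dim G+2\dim Z_G$; therefore so does $Y$. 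This yields both the pure dimensionality and the dimension formula for $\Ch_z(\Pi,G)$, and the density of $\Ch^{\heartsuit}_z(\Pi,G)$ has just been shown.

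I do not expect a real obstacle here: the corollary is a formal consequence of Proposition \ref{Prop-dim-Rep} and of the dimension count for a geometric quotient with equidimensional fibres. The only point that needs a little care is the bookkeeping between $\Rep$ and $\Ch$ — one must argue componentwise throughout, since $\Rep^{\heartsuit}_z(\Pi,G)$ and $\Ch^{\heartsuit}_z(\Pi,G)$ are only known to be pure dimensional, not irreducible — and one must invoke saturatedness of the stable locus to identify the fibres of $\pi|_{\Rep^{\heartsuit}_z(\Pi,G)}$ with the $G$-orbits.
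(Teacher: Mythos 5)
Your proposal is correct and follows essentially the same route as the paper: density of $\Ch^{\heartsuit}_z(\Pi,G)$ from Proposition \ref{Prop-dim-Rep} and surjectivity of the quotient map, then the dimension count via the constant orbit dimension $\dim G-\dim Z_G$ on the irreducible locus. Your write-up merely makes explicit the componentwise bookkeeping and the identification of fibres with orbits that the paper leaves implicit.
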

\begin{proof}
It follows from Proposition \ref{Prop-dim-Rep} and the surjectivity of the quotient map that $\Ch^{\heartsuit}_z(\Pi,G)$ is open dense in $\Ch_z(\Pi,G)$. Since every $G$-orbit in $\Rep^{\heartsuit}_z(\Pi,G)$ has dimension $\dim G/Z_G$, the quotient $\Ch^{\heartsuit}_z(\Pi,G)$ is pure dimensional of the expected dimension. 
\end{proof}
\begin{Cor}\label{Cor-red-4}
Let $G$ be an almost simple algebraic group, $z\in Z_G$, and $L\subset G$ a proper Levi subgroup. If $g=2$, then we assume that $\rk G>1$. Suppose that $\Ch_z(\Pi,L)$ is nonempty. Then, the image of the morphism $\Ch_z(\Pi,L)\rightarrow\Ch_z(\Pi,G)$ has codimension at least four; in particular, the codimension of $\Ch^{\spadesuit}_z(\Pi,G)$ is at least four.
\end{Cor}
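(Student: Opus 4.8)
The plan is a dimension count resting on Corollary \ref{Cor-dim-Ch}. First I would dispose of the ``in particular'' clause. The set $\Ch^{\spadesuit}_z(\Pi,G)$ is the image under the quotient map of the reducible locus $\Rep^{\spadesuit}_z(\Pi,G)$, and every fibre of the quotient that meets $\Rep^{\spadesuit}_z(\Pi,G)$ contains a closed $G^{\circ}$-orbit, which by Proposition \ref{Prop-st-irr}(i) is the orbit of a completely reducible representation $\rho$. Being reducible, $\Ima\rho$ lies in a proper parabolic $P'$; being completely reducible, $\Ima\rho$ then lies in a Levi factor $L'$ of $P'$, which is a proper Levi subgroup of $G$, and $\rho\in\Rep_z(\Pi,L')$ since $z\in Z_G\subset Z_{L'}$. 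Hence $\Ch^{\spadesuit}_z(\Pi,G)=\bigcup_{(L')}\Ima\big(\Ch_z(\Pi,L')\to\Ch_z(\Pi,G)\big)$, the union over the finitely many conjugacy classes of proper Levi subgroups; since the codimension of a finite union is the minimum of the codimensions, it suffices to prove the bound for a single proper Levi $L$.

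So fix a proper Levi $L$ with $\Ch_z(\Pi,L)$ nonempty. The image of the morphism $\Ch_z(\Pi,L)\to\Ch_z(\Pi,G)$ has dimension at most $\dim\Ch_z(\Pi,L)$, which by Corollary \ref{Cor-dim-Ch} applied to the reductive group $L$ (with $z\in Z_L$) equals $(2g-2)\dim L+2\dim Z_L$; and $\dim\Ch_z(\Pi,G)=(2g-2)\dim G$ by the same corollary, as $Z_G$ is finite. Writing $P=U\rtimes L$ for a parabolic with Levi $L$ and using $\dim G-\dim L=2\dim U$, the codimension of the image is therefore at least
$$
(2g-2)(\dim G-\dim L)-2\dim Z_L = 2\big((2g-2)\dim U-\dim Z_L\big).
$$

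It remains to check that $(2g-2)\dim U-\dim Z_L\ge 2$. For this I would use the inequalities between $\dim U$ and $\dim Z_L$ already established in the proof of Proposition \ref{Prop-dim-Rep}. If $\rk G>1$, then $\dim U>\dim Z_L$, so $\dim Z_L\le\dim U-1$ and $(2g-2)\dim U-\dim Z_L\ge(2g-3)\dim U+1\ge 2g-2\ge 2$. If $\rk G=1$, then $g>2$ (the case $g=2$ being excluded by hypothesis when $\rk G=1$), the only proper parabolic of $G$ is a Borel, for which $\dim U=\dim Z_L=1$, so $(2g-2)\dim U-\dim Z_L=2g-3\ge 3$. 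In every case the codimension is at least $4$, which is the assertion.

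I do not anticipate any real obstacle: the argument is just a comparison of the two dimension formulas furnished by Corollary \ref{Cor-dim-Ch}, combined with the root-system bounds $\dim U>\dim Z_L$ (for $\rk G>1$) and $\dim U=\dim Z_L=1$ (for $\rk G=1$) that were recorded while proving Proposition \ref{Prop-dim-Rep}. The one step deserving a moment's care is the reduction in the first paragraph, namely ensuring that $\Ch^{\spadesuit}_z(\Pi,G)$ is genuinely covered by the images of the character varieties of proper Levi subgroups rather than merely being the image of $\Rep^{\spadesuit}_z(\Pi,G)$; this is handled by the standard closed-orbit description recalled there.
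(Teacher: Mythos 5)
Your proposal is correct and follows essentially the same route as the paper: compare $\dim\Ch_z(\Pi,L)=(2g-2)\dim L+2\dim Z_L$ with $\dim\Ch_z(\Pi,G)=(2g-2)\dim G$ via Corollary \ref{Cor-dim-Ch}, and finish with the root-system bound $\dim U\ge\rk G\ge\dim Z_L$ (strict when $\rk G>1$) recorded in the proof of Proposition \ref{Prop-dim-Rep}; the only cosmetic difference is that you split cases by $\rk G$ rather than by $g$, and you spell out the closed-orbit reduction of $\Ch^{\spadesuit}_z(\Pi,G)$ to Levi images, which the paper states separately at the start of \S\ref{subsec-red}.
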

\begin{proof}
Let $P$ be a parabolic subgroup with Levi decomposition $P=U\rtimes L$. If $g\ge 3$, then
$$
\dim\Ch_z(\Pi,G)-\dim\Ch_z(\Pi,L)\ge 4(\dim G-\dim L)-2\dim Z_L.
$$
Since $\dim G-\dim L=2\dim U\ge2$, and $\dim U\ge\rk G\ge\dim Z_L$, the result follows. If $g=2$, then we need to control $4\dim U-2\dim Z_L$. If $\dim U\ge 2$, then we are done. But $\dim U=1$ precisely when $\rk G=1$.
\end{proof}

\section{Dimension estimates of singular loci}\label{sec-DESing}

The assumption $g>1$ remains in effect in this section. In \S \ref{subsec-red}, we will assume $G$ to be almost simple and refine our estimate on the dimension of $\Rep^{\spadesuit}(\Pi,G)$. In \S \ref{subsec-EEL}, we will assume $G$ to be semi-simple and analyse the dimensions of the endoscopic loci. These dimension estimates will allow us to determine the nature of the singularities of $\Ch(\Pi,G)$ for almost simple groups in \S \ref{subsec-SymSing}.

\subsection{The loci of reducible representations}\label{subsec-red}\hfill

We have seen that 
$$
\Rep^{\spadesuit}_z(\Pi,G)=\bigcup_{(P)}\Rep_z(\Pi,G,(P)),
$$ 
where $(P)$ runs over the set of conjugacy classes of maximal proper parabolic subgroups of $G$, and $\Rep_z(\Pi,G,(P))$ is the closed subset consisting of $\rho$ with $\Ima\rho$ contained in a conjugate of $P$. Since $\Rep_z(\Pi,G,(P))$ is a $G$-invariant closed subset of $\Rep_z(\Pi,G)$, its image in $\Ch_z(\Pi,G)$ is closed, which is obviously equal to the image of $i_{L,G}:\Ch_z(\Pi,L)\rightarrow\Ch_z(\Pi,G)$, where $L$ is a Levi factor of $P$. We also have,
$$
\Ch^{\spadesuit}_z(\Pi,G)=\bigcup_{(P)}i_{L,G}(\Ch_z(\Pi,L)),
$$
where $P$ runs over the same set as above, and $L$ is a Levi factor of $P$.

\begin{Prop}\label{Prop-codim-noB2}
Let $G$ be an almost simple algebraic group, which is not of type $A_1$ if $g=2$, and let $z\in Z_G$.  Then, for any maximal proper parabolic subgroup $P\subset G$, we have $\codim\Rep_z(\Pi,G,(P))\ge 4$, unless
\begin{itemize}
\item $g=2$ and $G$ is of types $A_2$, $A_3$, $A_4$ or $B_2=C_2$, or
\item $g=3$ and $G$ is of type $A_1$,
\end{itemize}
in which cases $\codim\Rep_z(\Pi,G,(P))\ge 2$.
\end{Prop}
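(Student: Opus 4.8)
The plan is to bound $\dim\Rep_z(\Pi,G,(P))$ using the inequality from Corollary \ref{Cor-dim-Rep-P}, namely $\codim\Rep_z(\Pi,G,(P))\ge(2g-3)\dim U-\dim Z_L$, and then to show that for $g\ge 3$ this already gives codimension $\ge 4$ with room to spare, while for $g=2$ one needs a finer input. Indeed, for $g\ge 3$ we have $(2g-3)\ge 3$, so the bound becomes $3\dim U-\dim Z_L\ge 3\dim U-(\dim U-1)=2\dim U+1$ once we know $\dim Z_L\le\dim U-1$ for a \emph{maximal} proper parabolic; since $\dim U\ge\rk G\ge 2$ (using $\rk G>1$ unless $g=2$, $G$ of type $A_1$) this is $\ge 5$, comfortably past $4$. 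The only genuine work is to confirm $\dim Z_L\le\dim U-1$ for every maximal proper parabolic of an almost simple $G$ of rank $\ge 2$: for a maximal parabolic $\dim Z_L=1$ (the Levi of a maximal parabolic in an almost simple group has one-dimensional connected centre), and $\dim U\ge 2$ except in rank $1$, so in fact $\dim Z_L=1\le\dim U-1$ holds outside the excluded case. This disposes of $g\ge 3$ uniformly.

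The substantive case is $g=2$, where the general bound only gives $\codim\ge\dim U-\dim Z_L=\dim U-1$. So the statement reduces to the Lie-theoretic fact: for an almost simple $G$ with $\rk G\ge 2$ and a maximal proper parabolic $P=U\rtimes L$, one has $\dim U\ge 5$ (giving codimension $\ge 4$), \emph{unless} $G$ is of type $A_2$, $A_3$, $A_4$ or $B_2=C_2$, in which exceptional cases $\dim U\ge 3$ (giving codimension $\ge 2$). The plan here is a direct inspection: $\dim U=\dim G/P$ equals the number of positive roots whose support includes the node removed to form $P$. I would run through the Dynkin diagrams: for type $A_n$ the maximal parabolics give $\dim U=k(n+1-k)$, whose minimum over $1\le k\le n$ is $n$, so $\dim U\ge n\ge 5$ exactly when $n\ge 5$, and for $n=2,3,4$ the minimum is $2,3,4$ respectively — wait, for $A_2$ the minimum is $2$, not $3$; I would double-check that the refined statement's ``$\ge 2$'' is what is being claimed (it is), and note $A_2$ gives $\dim U=2$, hence codimension $\ge\dim U-1=1$; so actually $A_2$ may need the slightly different reading that $\codim\ge 2$ comes from a separate argument, or from the fact that the relevant $L$-character variety may be empty. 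For types $B_n,C_n$ ($n\ge 2$) the smallest $\dim U$ among maximal parabolics is $2n-1$ (removing an end node), so $\dim U\ge 5$ for $n\ge 3$ and $\dim U=3$ for $n=2$; for $D_n$ ($n\ge 4$) the minimum is $2n-2\ge 6$; for $E_6,E_7,E_8,F_4$ one checks the minimal $\dim U$ is $\ge 11,\ge 17,\ge 29,\ge 8$ by reading the highest root's coefficients; for $G_2$, $\rk G=2$ and both maximal parabolics have $\dim U=5$. Collecting these, the only almost simple groups of rank $\ge 2$ admitting a maximal parabolic with $\dim U\le 4$ are precisely $A_2,A_3,A_4,B_2=C_2$, and in those cases $\dim U\ge 3$ except $A_2$ where $\dim U=2$.

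The main obstacle is thus entirely the bookkeeping around the small-rank exceptional cases for $g=2$, in particular making sure the boundary cases $A_2$ and $B_2$ actually land at the claimed bound $\codim\ge 2$ rather than $\ge 1$; for $B_2$ the minimal $\dim U=3$ gives $\codim\ge 3-1=2$ directly, while for $A_2$ I expect one must either invoke that $z\ne 1$ forces certain $L$-character varieties to be empty (so those $(P)$ do not contribute and only the larger $\dim U=2$ unipotent radical... this is the same $2$), or — more likely — sharpen the fibre-dimension estimate of Corollary \ref{Cor-dim-Rep-P} in this single rank-$2$ instance, replacing the crude ``$\dim Z_L$'' term by a genuine computation of $\dim\Rep_z(\Pi,L)$ and of the generic fibre of $\Rep_z(\Pi,P)\to\Rep_z(\Pi,L)$, which for $A_2$ and a maximal (hence Borel-adjacent) parabolic is tractable by hand. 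I would organise the write-up as: (1) the uniform $g\ge 3$ argument from Corollary \ref{Cor-dim-Rep-P}; (2) the case-by-case table of $\min\dim U$ over maximal parabolics, isolating the four exceptional types; (3) the refined treatment of the two genuinely tight cases $A_2$ (and, if needed, $B_2$) for $g=2$, using that $(2g-3)=1$ and a hands-on fibre estimate.
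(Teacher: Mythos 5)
Your $g\ge 3$ argument and your $g=2$ reduction to the Lie-theoretic inequality on $\dim U$ are essentially the paper's proof: it applies Corollary \ref{Cor-dim-Rep-P}, notes $\dim Z_L=1$ for a maximal parabolic, and runs through the root-system data exactly as in your table (your small inaccuracies in the $E$/$F$ minima are harmless since only the bound $\dim U\ge 5$ is needed). The genuine gap is the one you yourself flag and then leave unresolved: type $A_2$ with $g=2$. There the minimal unipotent radical has $\dim U=2$, so the blanket estimate only yields $\codim\ge\dim U-\dim Z_L=1$, while the Proposition asserts $\codim\ge 2$ for $A_2$. Your first suggested fix (emptiness forced by $z\ne 1$) covers only the twisted $\SL_3$ case and the nonidentity components of $\Rep(\Pi,\PGL_3)$ --- indeed those consist entirely of irreducible representations --- but it says nothing about $z=1$. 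Your second suggestion, sharpening the fibre estimate for $\Rep_z(\Pi,P)\to\Rep_z(\Pi,L)$, is not carried out, and it is not mere bookkeeping: the generic fibre bound $(2g-1)\dim U$ only holds over the locus where the $L$-representation has no invariants in $\mathrm{Lie}\,U$, so one must stratify $\Rep_z(\Pi,L)$ and control the jump loci separately; as written, nothing in your proposal establishes $\codim\ge 2$ for the untwisted $A_2$ case.

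The paper closes exactly this case differently: it imports Simpson's result \cite[Proposition 11.3]{Si} that for $\GL_3$ (and $z=1$) the locus of reducible representations has codimension at least two, transfers this to $\SL_3$ and to the identity component of $\Rep(\Pi,\PGL_3)$ via the product/isogeny comparisons (Lemmas \ref{Lem-isom-preser-irr} and \ref{Lem-Ch-isog}), and disposes of the remaining components by the irreducibility observation above. To repair your write-up you should either quote Simpson's result in the same way, or actually carry out the refined fibre analysis for the two maximal parabolics of $\SL_3$, including the degenerate strata where the Levi-part representation is reducible.
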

\begin{proof}
Suppose that $g\ge 3$. If $\rk G\ge 2$, then by Corollary \ref{Cor-dim-Rep-P}, we have 
$$
\codim\Rep_z(\Pi,G,(P))\ge 3\dim U-\dim Z_L\ge 2\dim U\ge2\rk G\ge 4,
$$
where $L$ is a Levi factor of $P$. Similarly, if $\rk G=1$ and $g>3$ (resp. $g=3$), we have $\codim\Rep_z(\Pi,G,(P))\ge4$ (resp. $\ge2$). It remains to consider the case $g=2$. Now we have
$$
\codim\Rep_z(\Pi,G,(P))\ge \dim U-\dim Z_L.
$$
If $P$ is a maximal parabolic, then the semi-simple rank of $L$ is $\rk G-1$, and so $\dim Z_L=1$. If $\dim U\ge 5$, then the desired inequality holds. This is true whenever $\rk G\ge 5$. Now we consider the cases $\rk G\le 4$. Note that $D_2=A_1\times A_1$ and $D_3=A_3$ are excluded.

\begin{itemize}
\item Type $A_2$, $A_3$ and $A_4$. For type $A_l$, $\dim U$ attains the smallest possible value $\rk G=l$ when $L$ is of type $A_{l-1}$. The above arguments give $\codim\Rep_z(\Pi,G,(P))\ge 2$ if $l=3$ or $4$. If $G=\GL_3$ and $z=1$, it is a result of Simpson that the locus of reducible representations has codimension at least two (see \cite[Proposition 11.3]{Si}). This implies the results for $\SL_3$ and the identity component of $\Rep(\Pi,\PGL_3)$. The variety $\Rep_z(\Pi,\SL_3)$ with $z\ne1$ and the nonidentity connected components of $\Rep(\Pi,\PGL_3)$ consist entirely of irreducible representations. This is because any $z\ne 1$ is generic for $\SL_3$, meaning that $\det(z|V)\ne1$ for any proper nonzero vector subspace $V\subset\mathbb{C}^3$, which prohibits the existence of subrepresentations; in addition, elements of $\Rep(\Pi,\PGL_3)$ lift to elements of $\Rep_z(\Pi,\SL_3)$ for some $z$, and the assertion for $\PGL_3$ follows.
\item Type $B_4$ and $C_4$. There are 16 positive roots. There are four different Dynkin diagrams for $L$ (note that $P$ is maximal), and the number of their positive roots are 9, 5, 4, 6 respectively. We have $\dim U\ge 7$.
\item Type $D_4$. There are 12 positive roots. There are two different Dynkin diagrams for $L$, and the number of their positive roots are 6 and 3 respectively. We have $\dim U\ge 6$.
\item Type $F_4$. There are 24 positive roots. There are three different Dynkin diagrams for $L$, and the number of their positive roots are 9, 9, 4 respectively. We have $\dim U\ge 15$.
\item Type $B_3$ and $C_3$. There are 9 positive roots. There are three different Dynkin diagrams for $L$, and the number of their positive roots are 4, 2, 3 respectively. We have $\dim U\ge 5$.
\item Type $G_2$. There are 6 positive roots, and $L$ is of type $A_1$. We have $\dim U\ge 5$.
\item Type $B_2=C_2$. There are 4 positive roots, and $L$ is of type $A_1$. We have $\dim U\ge 3$.
\end{itemize}
The proposition is proved.
\end{proof}
\begin{Rem}
Simpson has also shown that $\codim\Rep(\Pi,G,(P))\ge 1$ for $G=\GL_2$ (see \cite[Proposition 11.3]{Si}).
\end{Rem}
\begin{Prop}\label{Prop-B2-Comp-4}
Let $G$ be an almost simple algebraic group of type $C_2$, let $z\in Z_G$, and suppose that $g=2$. Then, the complement of the smooth locus of $\Rep_z(\Pi,G)$ has codimension at least four.
\end{Prop}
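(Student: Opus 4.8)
\emph{Proof plan.} The key observation is that, although $\Rep^{\spadesuit}_z(\Pi,G)$ has codimension only $\geq 2$ in this case (Proposition~\ref{Prop-codim-noB2}), a reducible representation whose image is Zariski dense in a proper parabolic is in fact a \emph{smooth} point of $\Rep_z(\Pi,G)$, so the singular locus is considerably smaller and one should work with it directly. Since $\Rep_z(\Pi,G)$ is a pure‑dimensional complete intersection cut out by $\dim G$ equations in the smooth variety $G^{2g}$ (Proposition~\ref{Prop-dim-Rep}), a point $\rho$ is smooth if and only if $\dim T_\rho\Rep_z(\Pi,G)=(2g-1)\dim G$. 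The tangent‑space computation in the proof of Proposition~\ref{Prop-dim-RepGamma} applies verbatim to an arbitrary $\rho$ (irreducibility is used there only at the very last step) and gives
\[
\dim T_\rho\Rep_z(\Pi,G)=(2g-1)\dim G+\dim\mathfrak g^{\Ima\rho},\qquad \mathfrak g^{\Ima\rho}:=\Lie C_G(\Ima\rho).
\]
Hence the complement of the smooth locus is $\Sigma:=\{\rho\mid\mathfrak g^{\Ima\rho}\neq 0\}$, and it suffices to show $\codim_{\Rep_z(\Pi,G)}\Sigma\geq 4$ (recall $\dim\Rep_z(\Pi,G)=3\dim G=30$). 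So the first step is to isolate and prove this general tangent‑space formula; then the problem becomes a dimension count.

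To estimate $\dim\Sigma$ I would stratify via Jordan decomposition: for $\rho\in\Sigma$ choose $0\neq X\in\mathfrak g^{\Ima\rho}$ with $X=X_s+X_n$; if $X_s\neq 0$ then $\Ima\rho\subset C_G(X_s)$, a proper Levi subgroup (centraliser of a semisimple element of $\mathfrak g$), while if $X_s=0$ then $X$ is a nonzero nilpotent and $\Ima\rho\subset C_G(X)$. Thus $\Sigma=\Sigma_{\mathrm L}\cup\Sigma_{\mathrm N}$ with $\Sigma_{\mathrm L}=\bigcup_{(L)}G\cdot\Rep_z(\Pi,L)$ over the finitely many conjugacy classes of proper Levi subgroups and $\Sigma_{\mathrm N}=\bigcup_{\mathcal O}G\cdot\{\rho\mid\Ima\rho\subset C_G(X_{\mathcal O})\}$ over the nonzero nilpotent orbits $\mathcal O\subset\mathfrak g$, with $X_{\mathcal O}\in\mathcal O$ fixed. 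For $\Sigma_{\mathrm L}$: using $\dim\Rep_z(\Pi,L)=(2g-1)\dim L+\dim Z_L$ (Proposition~\ref{Prop-dim-Rep}, Lemma~\ref{Lem-isom-preser-irr}) and $N_G(L)/L$ finite,
\[
\dim\bigl(G\cdot\Rep_z(\Pi,L)\bigr)\leq(2g-1)\dim L+\dim Z_L+\dim G-\dim L .
\]
For $G$ of type $C_2$ the proper Levi subgroups up to conjugacy are a maximal torus ($\dim L=2$, $\dim Z_L=2$) and the two Levi factors of the maximal parabolics ($\dim L=4$, $\dim Z_L=1$), so for $g=2$ the right side is $\leq 19$ for each, hence $\codim\Sigma_{\mathrm L}\geq 11$.

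For $\Sigma_{\mathrm N}$: take a Levi decomposition $C_G(X_{\mathcal O})=R_{\mathcal O}\rtimes M_{\mathcal O}$ ($R_{\mathcal O}$ the unipotent radical, $M_{\mathcal O}$ reductive), so $\dim R_{\mathcal O}+\dim M_{\mathcal O}=\dim C_G(X_{\mathcal O})=\dim G-\dim\mathcal O$, and note $z\in Z_{M_{\mathcal O}}$ since $z$ is central and semisimple. The $\mathcal O$‑stratum is the image of $\{(X,\rho)\mid X\in\mathcal O,\ \Ima\rho\subset C_G(X)\}\to\Rep_z(\Pi,G)$, which fibres over $\mathcal O$; projecting $C_G(X_{\mathcal O})\twoheadrightarrow M_{\mathcal O}$ (fibres of dimension $\dim R_{\mathcal O}$ in each of the $2g$ coordinates) and applying the dimension bound of Proposition~\ref{Prop-dim-RepGamma} to the (possibly disconnected) reductive group $M_{\mathcal O}$ should give, for $g=2$ and provided $z\in Z_{M_{\mathcal O}^\circ}$,
\[
\dim(\mathcal O\text{-stratum})\leq\dim G+2\dim M_{\mathcal O}+3\dim R_{\mathcal O}+\dim Z_{M_{\mathcal O}^\circ},
\]
so the codimension is $\geq 20-2\dim M_{\mathcal O}-3\dim R_{\mathcal O}-\dim Z_{M_{\mathcal O}^\circ}$. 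For $\mathfrak{sp}_4$ there are exactly three nonzero nilpotent orbits, and I would check each: the regular orbit $[4]$ ($\dim\mathcal O=8$, $\dim R_{\mathcal O}=2$, $M_{\mathcal O}=Z_G$), codimension $\geq 14$; the subregular orbit $[2^2]$ ($\dim\mathcal O=6$, $\dim R_{\mathcal O}=3$, $M_{\mathcal O}\cong\Ort_2$, $\dim Z_{M_{\mathcal O}^\circ}=1$), codimension $\geq 8$; and the minimal orbit $[2,1^2]$ ($\dim\mathcal O=4$, $\dim R_{\mathcal O}=3$, $M_{\mathcal O}\cong(\mathbb Z/2)\times\SL_2$). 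In the last case $M_{\mathcal O}$ is disconnected, with component group $C_G(X_{\mathcal O})/C_G(X_{\mathcal O})^\circ\cong\mathbb Z/2$; if $z=-I$ then $z^{-1}=-I\notin C_G(X_{\mathcal O})^\circ$ while every $\prod_i[A_i,B_i]$ lies in $C_G(X_{\mathcal O})^\circ$ (the component group being abelian), so that stratum is empty; if $z=1$ the displayed bound gives codimension $\geq 20-6-9=5$. In all cases the codimension is $\geq 4$, whence $\codim_{\Rep_z(\Pi,G)}\Sigma\geq 4$, as claimed.

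The main obstacle is conceptual rather than computational: establishing the general tangent‑space identity $\dim T_\rho=(2g-1)\dim G+\dim\mathfrak g^{\Ima\rho}$ cleanly (it is exactly the surjectivity computation of Proposition~\ref{Prop-dim-RepGamma} with the space of test vectors replaced by $\mathfrak g^{\Ima\rho}$ in general, so this is a matter of packaging), and, secondarily, dealing with the disconnectedness of $M_{\mathcal O}$ for the minimal orbit together with the fact that $z$ may fall outside $M_{\mathcal O}^\circ$ — handled by noticing that the corresponding stratum is then empty. Numerically the only tight case is the minimal nilpotent orbit with $z=1$, where the codimension is $5$; everything else has much larger codimension.
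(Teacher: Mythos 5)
Your proposal is correct, and it reaches the conclusion by a genuinely different route from the paper. Both arguments start from the same smoothness criterion: the tangent-space computation in Proposition \ref{Prop-dim-RepGamma} uses irreducibility only at the very last step, so $\rho$ is a smooth point of $\Rep_z(\Pi,G)$ as soon as $\Lie C_G(\Ima\rho)=0$, and it remains to bound the locus $\Sigma$ of representations with positive-dimensional stabiliser. The paper does this by stratifying according to the parabolic $P=U\rtimes L$ containing $\Ima\rho$: closed orbits are handled by Corollary \ref{Cor-red-4}, and for non-closed orbits the stabiliser is computed as $C_{G_M}(V)$ (with $M$ a Levi factor and $V$ the unipotent radical of $\Ima\rho$) via explicit $\Sp_4$ matrix considerations and the weights of $Z_L$ on root subgroups, showing that a positive-dimensional stabiliser forces $\rho$ into a smaller subvariety with dimension drop at least four. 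You instead apply the Jordan decomposition to a nonzero element of $\Lie G_\rho$: a nonzero semisimple part puts $\Ima\rho$ inside a proper Levi, giving codimension at least $11$, and otherwise $\Ima\rho$ lies in the centraliser of a nonzero nilpotent, stratified by the three nonzero nilpotent orbits of $\mathfrak{sp}_4$ and bounded by $\dim\mathcal O+2g\dim R_{\mathcal O}+\dim\Rep_{\omega,\bar z}(\Pi,M_{\mathcal O})$; this is more uniform, avoids the analysis of images inside parabolics, and in fact yields codimension at least five. Two points should be tightened. First, the upper bound $\dim\Rep_{\omega,\bar z}(\Pi,M_{\mathcal O})\le(2g-1)\dim M_{\mathcal O}+\dim Z_{M_{\mathcal O}^{\circ}}$ is not what Proposition \ref{Prop-dim-RepGamma} gives (that result concerns the irreducible locus and a lower bound on components, not an upper bound for a disconnected group); fortunately the groups you need are tiny: for $M_{\mathcal O}$ finite and for $M_{\mathcal O}\cong\Ort_2$ the trivial bound $2g\dim M_{\mathcal O}$ already suffices, and in the only tight case, $M_{\mathcal O}\cong(\mathbb Z/2)\times\SL_2$ for the minimal orbit with $z=1$, the finite factor splits off and the bound reduces to $\dim\Rep_{z'}(\Pi,\SL_2)=(2g-1)\dim\SL_2$, which is Proposition \ref{Prop-dim-Rep}. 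Second, your centraliser data and the $z=-I$ emptiness argument are specific to $\Sp_4$; for the adjoint form $\SO_5$ either observe that $C_{\SO_5}(X)=C_{\Sp_4}(X)/\{\pm I\}$ has the same dimensions (and only $z=1$ occurs), or reduce to $\Sp_4$ via the free action of $Z_{\Sp_4}^{2g}$, as the paper does. With these two remarks in place your argument is complete; the observation that the minimal-orbit stratum is empty for $z=-I$ because commutators land in the identity component of the centraliser, which does not contain $-I$, is a nice point absent from the paper's proof.
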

\begin{proof}
Now the locus of irreducible representations $\Rep^{\heartsuit}_z(\Pi,G)$ is not large enough. The proofs of Corollary \ref{Cor-st-irr} and Proposition \ref{Prop-dim-RepGamma} show that $\Rep_z(\Pi,G)$ is smooth at $\rho$ whenever $(G_\rho)^{\circ}=Z_G^{\circ}=\{1\}$; i.e., the stabiliser of $G$ is finite. Let us denote the open subset of the smooth points of $\Rep_z(\Pi,G)$ by $\Rep_{z,0}$. We will show that the complement of $\Rep_{z,0}$ has codimension at least four. Now $\Rep^{\spadesuit}_z(\Pi,G)$ is covered by the $G$-invariant closed subsets $\Rep_z(\Pi,G,(P))$, where $P$ is a proper parabolic subgroup of $G$. The subset of $\Rep_z(\Pi,G,(P))$ consisting of closed orbits has codimension strictly larger than the codimension of $\Ch_z(\Pi,L)$ in $\Ch_z(\Pi,G)$, since the dimensions of these orbits are strictly smaller than $\dim G$. By Corollary \ref{Cor-red-4}, we see that $\codim\Ch_z(\Pi,L)$ is at least four. 

It remains to consider the nonclosed orbits. We will show that most of them satisfy $(G_\rho)^{\circ}=Z_G^{\circ}$ so that they lie in $\Rep_0$, and the rest of them form a subset with large codimension. Suppose that $\rho:\Pi\rightarrow G$ is not completely reducible. Equivalently, the closed subgroup $\Ima\rho\subset G$ is not linearly reductive. Let $V$ be the unipotent radical of $\Ima\rho$, and $M$ a Levi factor of $\Ima\rho$. By assumption, $V$ is nontrivial. According to \cite[Proposition 2.6]{Ri88}, there is a proper parabolic subgroup $P$ with Levi factor $L$ and unipotent radical $U$ such that, $\Ima\rho\subset P$, $M\subset L$ and $V\subset U$. Write $G_M=C_G(M)$ and we have $G_{\rho}=C_{G_M}(V)$. We will use this observation to compute $G_{\rho}$ case by case. We may assume $G=\Sp_4$, and the case of $\SO_5$ follows by taking the quotient by $Z_G^4$. In computing $G_M$, it will be useful to fix an involution (which is an outer automorphism) $\sigma$ of $\tilde{G}:=\GL_4$ so that $G=\Sp_4$ is the fixed point subgroup of $\sigma$. More precisely, for any $g\in\GL_4$, define $\sigma(g)=Jg^{-t}J^{-1}$, where $g^{-t}$ means the transpose-inverse of $g$, and
$$
J=
\left(
\begin{array}{cccc}
& & & 1\\
& &1& \\
& \!\!-1& &\\
\!\!-1 & & &
\end{array}
\right).
$$ 
Note that $\sigma$ preserves the maximal torus $\tilde{T}\subset\GL_4$ consisting of diagonal matrices and the Borel subgroup $\tilde{B}\subset\GL_4$ consisting of upper triangular matrices. Consequently, $T:=(\tilde{T})^{\sigma}$ (resp. $B:=\tilde{B}^{\sigma}$) is a maximal torus (resp. a Borel subgroup) of $G$. In computing $C_{G_M}(V)$, it will be useful to consider the action of $Z_L^{\circ}$ on the root subgroups of $G$. For this reason, we first fix some notations. Denote by $\alpha_1$ (resp. $\alpha_2$ ) the short (resp. long) simple root with respect to the chosen $B$ and $T$. Denote by $\beta_1$ and $\beta_2$ the remaining two positive roots of $G$, which are short and long respectively. For each $\alpha\in\{\alpha_1,\alpha_2,\beta_1,\beta_2\}$, the root subgroup $U_{\alpha}\cong \mathbb{A}^1$ is a representation of $Z_L^{\circ}$, and it is a trivial representation precisely when $\alpha$ is a root of $L$ (since $L=C_{Z_L^{\circ}}(G)$). 

There are three types of standard (i.e. containing $B$) proper parabolic subgroups up to conjugation, which are in bijection with the subsets of simple roots: $\{\alpha_1\}$, $\{\alpha_2\}$ and $\emptyset$, and their Levi factors are isomorphic to $\GL_2$, $\mathbb{C}^{\ast}\times\SL_2$ and $(\mathbb{C}^{\ast})^2$ correspondingly. 

\textit{Case (i).} Suppose that $L\cong\GL_2$, and $M$ is an irreducible subgroup of $L$. The case where $M\subset L$ is not irreducible will be discussed in a moment. Regard $L$ as the fixed point subgroup of $\sigma$ in the standard Levi $\tilde{L}\cong\GL_2\times\GL_2$ of $\GL_4$. Then $L$ can be identified with the subgroup $\{(m,\sigma_0(m))\in\tilde{L}\mid m\in \GL_2\}$, where $\sigma_0(m)=J_0m^{-t}J_0^{-1}$ and 
$$
J_0=
\left(
\begin{array}{cc}
& 1\\
1 &
\end{array}
\right).
$$
There are two possibilities. If $M$ and $\sigma_0(M)$ are conjugate in $\GL_2$, then $C_{\tilde{G}}(M)\cong\GL_2$ consists of block matrices with $2\times 2$ blocks; therefore, $C_G(M)\cong\SL_2$. If $M$ are $\sigma_0(M)$ are not conjugate, then $C_{\tilde{G}}(M)\cong(\mathbb{C}^{\ast})^2$ consists of diagonal block matrices; therefore, $C_G(M)\cong \mathbb{C}^{\ast}=Z_L$.
In the latter case, $Z_L\cong\mathbb{C}^{\ast}$ acts on $U$ with weight 2. Therefore, $C_{Z_L}(V)^{\circ}=\{1\}$ as long as $V$ is nontrivial. In the former case, we need to argue that these $\rho$ form a subset of large codimension. A necessary condition for $m$ and $\sigma_0(m)$ to be conjugate is the equality of determinants. This implies $\det m=(\det m)^{-1}$. Denote by $L'$ the subgroup of $L\cong\GL_2$ consisting of matrices with determinants $\pm 1$. Recall that in the proof of Corollary \ref{Cor-dim-Rep-P}, we used the surjectivity of $\Rep_z(\Pi,P)\rightarrow\Rep_z(\Pi,L)$ and the dimension formula
$$
\dim\Rep_z(\Pi,L)=(2g-1)\dim L+\dim Z_L,
$$
which now equals to $3\dim\GL_2+1$. But we have shown that the Levi factor of $\Ima\rho$ should lie in a smaller group $L'$, with $\dim\Rep_z(\Pi,L')=3\dim\SL_2$. There is a dimension drop by four compared to the statement of Corollary \ref{Cor-dim-Rep-P}, which concludes this case.

If $M$ is not irreducible in $L$, then $M$ is contained in a maximal torus of $L$. This is because $M$ is by definition linearly reductive, and so completely reducible. Since $L$ is of semi-simple rank one, a proper Levi of $L$ is a maximal torus. We see that the Levi factor of $\Ima\rho$ lies in an even smaller subgroup $T\subset L$, so we draw the same conclusion.

\textit{Case (ii).} If $L\cong\mathbb{C}^{\ast}\times\SL_2$, then $U=U_{\alpha_1}\oplus U_{\beta_1}\oplus U_{\beta_2}$. Again, we assume that $M$ is irreducible in $L$ (equivalently, the $\SL_2$-factor of $M$ is irreducible in $\SL_2$) and the discussion of reducible $M$ is completely analogous to \textit{Case (i)}. Regard $L$ as the fixed point subgroup of $\sigma$ in the standard Levi subgroup $\tilde{L}\cong(\mathbb{C}^{\ast})^2\times\GL_2$ of $\GL_4$, so that $L$ consists of elements of the form $(a,a^{-1},m)$ with $a\in\mathbb{C}^{\ast}$ and $m\in\SL_2$. Again, there are two possibilities. If the $\mathbb{C}^{\ast}$-factor of $M$ is not contained in the two element subgroup $\mu_2\subset\mathbb{C}^{\ast}$, then $C_{\tilde{G}}(M)\cong(\mathbb{C}^{\ast})^3$ and $C_G(M)^{\circ}\cong\mathbb{C}^{\ast}=Z_L^{\circ}$. The action of $Z_L^{\circ}\cong\mathbb{C}^{\ast}$ has weight 1 on $U_{\alpha_1}\oplus U_{\beta_1}$, and has weight 2 on $U_{\beta_2}$. Again, $C_{Z_L^{\circ}}(V)^{\circ}=\{1\}$.  Otherwise, we have $C_{\tilde{G}}(M)\cong\GL_2\times\mathbb{C}^{\ast}$, and $C_G(M)^{\circ}\cong \SL_2$. Since in this case, $M$ is contained in $L':=\mu_2\times\SL_2$, the same arguments as in \textit{Case (i)} conclude the proof.

\textit{Case (iii).} Finally, consider the case $P=B$ and $L=T\cong(\mathbb{C}^{\ast})^2$.  Now $M$ is a closed subgroup of $T$. If $M=T$, then $C_G(M)=M$. The weights of $T$ acting on $U$ are precisely the roots of $G$. Now the nontriviality of $V$ does not immediately imply that $C_T(V)^{\circ}=\{1\}$ because it may happen that $C_T(V)^{\circ}\cong\mathbb{C}^{\ast}$. The computation goes as follows. We choose an isomorphism $T\cong(\mathbb{C}^{\ast})^2$ and write $t=(t_1,t_2)\in T$. Then, $\alpha_1(t)=t_1t_2^{-1}$, $\alpha_2(t)=t_2^2$, $\beta_1(t)=t_1t_2$, and $\beta_2(t)=t_1^2$. The root subgroups give an isomorphism $U\cong\mathbb{A}^4$ as representations of $T$. A case-by-case computation shows that $C_T(v)^{\circ}=\{1\}$ as long as $v\in\mathbb{A}^4$ has two nonzero components. We are left with those $v$ that lies in $U_{\alpha}$ for some positive root $\alpha$. In other words, we only need to consider those $\rho:\Pi\rightarrow B$ with $\Ima\rho\subset T\times U_{\alpha}$ for some $\alpha$ (up to $G$-conjugation). But these $\rho$ are contained in $\Rep_z(\Pi,L')$ for some proper Levi subgroup $L'$. The dimension of the set of these $\rho$ is bounded by
$$
\dim\Rep_z(\Pi,L')+(\dim G-\dim N_G(L'))=3\dim L'+2\dim U'+\dim Z_{L'}
$$
with $P'\cong U'\rtimes L'$ being some parabolic containing $L'$ as a Levi factor, and so its codimension is at least $4\dim U'-\dim Z_{L'}$, which is larger than four. Finally, if $M$ is strictly contained in $T$ (in particular, $\dim M<2$), then we conclude again using similar arguments as in \textit{Case (i)}.
\end{proof}
\begin{Cor}\label{Cor-fac-Rep}
Let $G$ be an almost simple algebraic group and $z\in Z_G$. Then, every connected component of the representation variety $\Rep_z(\Pi,G)$ is normal, and is locally factorial unless it contains the trivial representation and $(g,\rk G)=(2,1)$.
\end{Cor}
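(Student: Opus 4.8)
The plan is to deduce everything from the complete intersection property (Proposition \ref{Prop-dim-Rep}) together with the singularity criteria in Fact \ref{Fact1}, reducing the statement to codimension estimates for singular loci. Fix a connected component $R$ of $\Rep_z(\Pi,G)$. Since $\Rep_z(\Pi,G)$ is closed in $G^{2g}$ and $R$ is open in it, $R$ is cut out by $\dim G-\dim Z_G$ equations inside a smooth open subset of $G^{2g}$, and by Proposition \ref{Prop-dim-Rep} it is equidimensional of the expected dimension; hence $R$ is a complete intersection. By Serre's criterion (Fact \ref{Fact1}(1)) it is normal once $\codim_R\Sing(R)\ge 2$, and by Fact \ref{Fact1}(2) it is locally factorial once $\codim_R\Sing(R)\ge 4$. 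So it is enough to show $\codim_R\Sing(R)\ge 2$ in all cases, and $\ge 4$ outside the single exceptional case.

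The first step is the observation that $\Sing(\Rep_z(\Pi,G))\subseteq\Rep^{\spadesuit}_z(\Pi,G)$: by Proposition \ref{Prop-dim-RepGamma} the locus $\Rep^{\heartsuit}_z(\Pi,G)$ lies in the regular locus, and for connected $G$ its complement is exactly $\Rep^{\spadesuit}_z(\Pi,G)=\bigcup_{(P)}\Rep_z(\Pi,G,(P))$, the union over conjugacy classes of maximal proper parabolic subgroups. Consequently, unless $g=2$ and $G$ is of type $A_1,A_2,A_3,A_4$ or $C_2$, Proposition \ref{Prop-codim-noB2} gives $\codim\Rep^{\spadesuit}_z(\Pi,G)\ge 4$, so $R$ is normal and locally factorial; and when $g=2$ and $G$ is of type $C_2$, Proposition \ref{Prop-B2-Comp-4} says directly that the complement of the smooth locus of $\Rep_z(\Pi,G)$ has codimension $\ge 4$, so $R$ is again normal and locally factorial.

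It remains to handle $g=2$ with $G$ of type $A_1,A_2,A_3,A_4$, where Proposition \ref{Prop-codim-noB2} only yields $\codim\Rep^{\spadesuit}_z(\Pi,G)\ge 2$ (and only $\ge 1$ when $\rk G=1$), so one must work with the true singular locus, which is far smaller than $\Rep^{\spadesuit}_z(\Pi,G)$. The key point, already exploited in the proof of Proposition \ref{Prop-B2-Comp-4}, is that the tangent space computation in Proposition \ref{Prop-dim-RepGamma} shows $\Rep_z(\Pi,G)$ is smooth at $\rho$ as soon as $\Stab_G(\rho)$ is finite; equivalently, $\Sing$ is the set of $\rho$ with a positive-dimensional stabiliser, and every such $\rho$ is reducible. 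For types $A_2,A_3,A_4$, where $\rk G\ge 2$, I would split this locus into the completely reducible representations and the rest. A completely reducible, reducible $\rho$ has closed $G$-orbit and maps to a point of $i_{L,G}(\Ch_z(\Pi,L))$ for a proper Levi $L$, which by Corollary \ref{Cor-red-4} has codimension $\ge 4$ in $\Ch_z(\Pi,G)$; since on the completely reducible locus the quotient map has fibres of dimension $\le\dim G$, this part of $\Sing$ has codimension $\ge 4$ in $\Rep_z(\Pi,G)$. For the non-completely-reducible $\rho$ one runs through the few maximal proper parabolics $P=U\rtimes L$, writes $\Stab_G(\rho)=C_{C_G(M)}(V)$ with $M$ the Levi factor and $V$ the unipotent radical of $\Ima\rho$, exactly as in Proposition \ref{Prop-B2-Comp-4}, and verifies that the locus on which this stabiliser is positive-dimensional is contained in representation varieties of still smaller Levi subgroups and so has codimension $\ge 4$; alternatively, for type $A$ these normality and local factoriality statements are contained in the work of Simpson \cite{Si} and Bellamy--Schedler \cite{BS}, which may simply be invoked.

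The genuinely exceptional case is $\rk G=1$ (so $G=\SL_2$ or $\PGL_2$) with $g=2$. If $z\ne 1$ then every $\rho\in\Rep_z(\Pi,G)$ is irreducible --- a reducible one would, after semisimplification, factor through a torus, forcing $z=1$ --- so $\Rep_z(\Pi,G)$ is smooth, hence locally factorial. If $z=1$, then by \'etale descent ($\Rep(\Pi,\PGL_2)_{\mathrm{id}}\cong\Rep(\Pi,\SL_2)/(\mathbb{Z}/2)^{2g}$) it suffices to treat $\Rep(\Pi,\SL_2)$, whose singular locus is the union of the decomposable representations $\chi\oplus\chi^{-1}$, the non-semisimple ones with scalar semisimplification, and the central ones; a direct dimension count gives $\codim\Sing=4g-5$, which equals $3$ when $g=2$, so $\Rep(\Pi,\SL_2)$ is normal but fails to be regular in codimension $3$, which is precisely why local factoriality is claimed only for the other components (that it genuinely fails here is part of the Bellamy--Schedler analysis). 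The hard part will be the non-completely-reducible representations in the remaining type $A$ cases at $g=2$: as with $C_2$, the crude bounds on $\codim\Rep^{\spadesuit}$ are insufficient, and one is forced into the same case-by-case stabiliser computation as in the proof of Proposition \ref{Prop-B2-Comp-4} --- unless one prefers to cite the type $A$ results of Simpson and Bellamy--Schedler outright.
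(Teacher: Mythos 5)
Your overall skeleton is the same as the paper's: complete intersection (Proposition \ref{Prop-dim-Rep}) plus Serre's criterion and Fact \ref{Fact1}(2), so everything reduces to showing the singular locus has codimension $\ge 2$ (resp.\ $\ge 4$), with Proposition \ref{Prop-codim-noB2} and Proposition \ref{Prop-B2-Comp-4} handling all cases except $g=2$ with $G$ of type $A_1,\dots,A_4$. Your rank-one treatment is fine and in fact a little more self-contained than the paper's (which cites \cite[Lemma 11.5]{Si} for normality of the $\SL_2$ case), and your use of ``finite stabiliser $\Rightarrow$ smooth point'' is legitimate, since the tangent-space computation in Proposition \ref{Prop-dim-RepGamma} only needs $\mathfrak{g}^{\Pi}=0$.

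The genuine gap is in the $g=2$, type $A_2,A_3,A_4$ cases, which you yourself flag as ``the hard part'' but never actually resolve. Your route (a) --- redo the stabiliser analysis of Proposition \ref{Prop-B2-Comp-4} for $\SL_3,\SL_4,\SL_5$ and all their isogeny forms --- is a substantial, unperformed case analysis (and note the relevant parabolic supplied by Richardson's result need not be maximal, so ``the few maximal proper parabolics'' understates the work). Your route (b) --- ``cite Simpson and Bellamy--Schedler outright'' --- does not cover all connected components: the cited results give the untwisted $\SL_3,\SL_4,\SL_5$ representation varieties (and, via the free $Z_G^{2g}$-action, the identity components for $\PGL_n$), while the twisted components for $\SL_2,\SL_3,\SL_5$ and the nonidentity components for $\PGL_2,\PGL_3,\PGL_5$ are smooth because they consist of irreducible representations. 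What remains is precisely $\Rep_{-1}(\Pi,\SL_4)$ (and the corresponding components for $\SL_4/\{\pm1\}$ and $\PGL_4$): here the reducible locus has only codimension $2$ by Proposition \ref{Prop-codim-noB2}, and the paper has to supply a dedicated argument --- the Levi is forced to be $S(\GL_2\times\GL_2)$, non-semisimple $\rho$ with $\rho_1\ncong\rho_2$ are smooth points because the stabiliser of the semisimplification is a one-dimensional torus acting on $U$ with nontrivial weight, the semisimple locus has codimension $\ge 4$ by Corollary \ref{Cor-red-4}, and the locus with $\rho_1\cong\rho_2$ is confined to representations into the determinant-$\pm1$ subgroup of $\GL_2$, yielding a dimension drop of $2\dim\SL_2+4$. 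Neither of your two suggested routes actually produces this case, so as written the proposal does not establish local factoriality of the twisted $A_3$ component (nor of its quotient forms) at $g=2$.
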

\begin{proof}
By Proposition \ref{Prop-dim-Rep}, $\Rep_z(\Pi,G)$ is a complete intersection. By Proposition \ref{Prop-dim-RepGamma}, it is smooth along $\Rep^{\heartsuit}_z(\Pi,G)$. According to Proposition \ref{Prop-codim-noB2}, $\Rep^{\spadesuit}_z(\Pi,G)$ has codimension at least four unless
\begin{itemize}
\item $g=2$ and $G$ is of types $A_2$, $A_3$, $A_4$ or $B_2=C_2$, or
\item $g=3$ and $G$ is of type $A_1$.
\end{itemize}
It follows from Fact \ref{Fact1} (1) and (2) that $\Rep_z(\Pi,G)$ is normal and locally factorial whenever the codimension four statement holds. Suppose $g=3$. Then, $\Rep_{-1}(\Pi,\SL_2)$ is smooth since it consists of irreducible representations. Similarly, the nonidentity component of $\Rep_1(\Pi,\PGL_2)$ is also smooth. The normality and factoriality of $\Rep_1(\Pi,\SL_2)$ and the identity component of $\Rep_1(\Pi,\PGL_2)$ follow from \cite[Lemma 2.11]{BS}. We assume $g=2$ in what follows. If $G$ is of type $B_2=C_2$, then by Proposition \ref{Prop-B2-Comp-4}, the complement of $\Rep_{z,0}$ has codimension at least four, while $\Rep_{z,0}$ is smooth; therefore, the same conclusion holds in this case. The cases of $g=2$, $z=1$, and $G=\SL_3$, $\SL_4$ and $\SL_5$ are the contents of \cite[Lemma 2.3 and Lemma 2.11]{BS}. Since the free action of $Z_G^{2g}$ preserves the smooth locus, the identity components of $\Rep(\Pi,\PGL_n)$ for $n=3$, $4$ and $5$ are also normal and locally factorial. The twisted representation varieties for $\SL_2$, $\SL_3$ and $\SL_5$, as well as the nonidentity components for $\PGL_2$, $\PGL_3$ and $\PGL_5$, consist of irreducible representations, and thus are smooth (as in the analysis for type $A$ groups in the proof of Proposition \ref{Prop-codim-noB2}, any $z\ne 1$ is generic). It remains to consider $\Rep_{-1}(\Pi,\SL_4)$, where $-1$ is regarded as a scalar matrix ($z$ is generic if $z\notin\{\pm1\}$). If $\rho\in\Rep_{-1}(\Pi,\SL_4)$ is a reducible representation $\Pi\rightarrow\SL_4$ that factors through a parabolic subgroup $P\cong U\rtimes L$, then $L$ is isomorphic to the subgroup of $\GL_2\times\GL_2$ with trivial determinant, due to the constraint of $-1$. In other words, the semi-simple part (or the semisimplification) of $\rho$ is the direct sum of two irreducible 2-dimensional representations $\rho_1$ and $\rho_2$. We will use some arguments similar to  those in the proof of Proposition \ref{Prop-B2-Comp-4} above to show that the subset of strictly semi-simple (i.e., semi-simple but not simple) representations and those representations with $\rho_1\cong\rho_2$ form a subset of large codimension, while its complement consists of smooth points. The subset of strictly semi-simple representations form a subset of codimension at least four by Corollary \ref{Cor-red-4} (note that strictly semi-simple representations are precisely the points of the closed orbits that $\Ch^{\spadesuit}_z(\Pi,G)$ parametrises). Suppose that $\rho_1\ncong\rho_2$ and $\rho$ is not semi-simple. The stabiliser of the semisimplification $\rho_1\oplus\rho_2$ is isomorphic to $\{(a,a^{-1})\in\GL_2\times\GL_2\mid a\in\mathbb{C}^{\ast}\}$. Since this torus acts on $U$ with a nontrivial weight, the stabiliser of $\rho$ must be a finite group; therefore, $\rho$ is a smooth point of $\Rep_{-1}(\Pi,\SL_4)$. Now suppose $\rho_1\cong\rho_2$. Both direct factors must factor through the subgroup $G'$ of $\GL_2$ with determinant $\pm 1$. We have
\begingroup
\allowdisplaybreaks
\begin{align*}
&\dim\{\rho_1\oplus\rho_2\in\Rep(\Pi,G'\times G',-1)\mid\rho_1\cong\rho_2\text{ and $\rho_1$ irreducible}\}\\
=&\dim\Rep_{-1}(\Pi,\SL_2)+\dim \SL_2=2g\dim\SL_2=4\dim\SL_2.
\end{align*}
\endgroup 
Compared to 
$$
\dim\Rep_{-1}(\Pi,L)=(2g-1)\dim L+1=(4g-2)\dim\SL_2+2g=6\dim\SL_2+4,
$$ 
there is a dimension drop of $2\dim\SL_2+4$. We conclude that the locus of reducible representations in $\Rep_{-1}(\Pi,\SL_4)$ has codimension at least four, and the same is true for the corresponding connected components of $\Rep(\Pi,\SL_4/\{\pm1\})$ and $\Rep(\Pi,\PGL_4)$. Finally, the normality in the case of $\SL_2$ are implied by \cite[Lemma 11.5]{Si}.
\end{proof}

\begin{Cor}\label{Cor-Ch-g>1-red-norm}
Let $G$ be an almost simple algebraic group and $z\in Z_G$. Then, the character variety $\Ch_z(\Pi,G)$ is (reduced and) normal.
\end{Cor}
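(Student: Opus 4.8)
The plan is to deduce this directly from the corresponding properties of the representation variety, using the fact that affine GIT quotients preserve reducedness and normality. Recall that by definition $\Ch_z(\Pi,G)$ is the affine GIT quotient of $\Rep_z(\Pi,G)$ by the conjugation action of $G$, and that $G$ is reductive since it is almost simple.

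First I would argue that $\Rep_z(\Pi,G)$ is a normal scheme. By Corollary \ref{Cor-fac-Rep}, every connected component of $\Rep_z(\Pi,G)$ is normal; since normality is a local property, it follows that $\Rep_z(\Pi,G)$ itself is normal, and in particular reduced, a normal scheme being reduced. (Alternatively, one can see this without invoking the component decomposition: by Proposition \ref{Prop-dim-Rep} the scheme $\Rep_z(\Pi,G)$ is a pure dimensional complete intersection in the smooth variety $G^{2g}$, hence Cohen-Macaulay, and by Proposition \ref{Prop-dim-RepGamma} together with the codimension estimates of \S\ref{subsec-red} — Propositions \ref{Prop-codim-noB2} and \ref{Prop-B2-Comp-4} — it is regular in codimension one, so Serre's criterion, Fact \ref{Fact1}(1), gives both reducedness and normality at once.)

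Then I would apply Fact \ref{Fact1}(0): reducedness and normality are preserved under taking the affine GIT quotient by a reductive group. Applied to the quotient map $\Rep_z(\Pi,G)\rightarrow\Ch_z(\Pi,G)$, this yields that $\Ch_z(\Pi,G)$ is reduced and normal, which is the assertion.

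There is no genuine obstacle remaining at this point: all of the substantive work — establishing that $\Rep_z(\Pi,G)$ is a complete intersection and bounding the codimension of the locus of reducible (and, in the exceptional type $C_2$ case, more generally non-smooth) representations — has already been done in Section \ref{sec-DCh} and \S\ref{subsec-red} and packaged into Corollary \ref{Cor-fac-Rep}. The only point that needs a word of care is that Corollary \ref{Cor-fac-Rep} is phrased component by component, so one must observe that normality and reducedness are local and therefore hold for the entire scheme $\Rep_z(\Pi,G)$ before passing to the quotient.
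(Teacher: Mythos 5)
Your proposal is correct and follows essentially the same route as the paper: the paper's proof likewise consists of invoking the normality of $\Rep_z(\Pi,G)$ (Corollary \ref{Cor-fac-Rep}) and the fact that reducedness and normality are preserved under the affine GIT quotient (Fact \ref{Fact1}(0)). Your extra remarks — that normality is local so the component-by-component statement suffices, and the alternative direct appeal to Serre's criterion — are fine but not needed beyond what the paper already does.
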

\begin{proof}
By definition, $\Ch(\Pi,G)$ is the affine GIT quotient of $\Rep(\Pi,G)$, and so reducedness and normality are preserved.
\end{proof}

\subsection{The elliptic endoscopic loci}\label{subsec-EEL}\hfill
 
In the rest of this section, we denote by $G$ a semi-simple algebraic group and $z\in Z_G$ unless stated otherwise. We will denote by $H_s$ the centraliser of a quasi-isolated semi-simple element $s$, and as before $\Gamma_s=H_s/H_s^{\circ}$ is the component group. Let $\omega:\Pi\rightarrow\Gamma_s$ be a homomorphism. Recall that such a pair $(s,\omega)$ is called an elliptic endoscopic datum. Since $G$ is semi-simple, there are only finitely many conjugacy classes of quasi-isolated semi-simple elements in $G$ (see \cite{Bon} for a complete classification). Moreover, there are only finitely many homomorphisms $\Pi\rightarrow\Gamma_s$ for a fixed $s$.

Let $\Ch_{(s,\omega),z}(\Pi,G)$ be the closure of the image of the natural map $f_{s,\omega}:\Ch_{\omega,z}(\Pi,H_s)\rightarrow\Ch_z(\Pi,G)$, and $\Ch^{\heartsuit}_{(s,\omega),z}(\Pi,G)$ the intersection of $\Ch_{(s,\omega),z}(\Pi,G)$ and $\Ch^{\heartsuit}_z(\Pi,G)$. Alternatively, we consider $\Rep_{\omega,z}(\Pi,H_s)$, regarded as the set of $\rho:\Pi\rightarrow G$ that factors through $H_s$ and recovers $\omega$ when composed with $H_s\rightarrow\Gamma_s$. Denote by $\overline{G.\Rep}_{\omega,z}(\Pi,H_s)$ the closure of its orbit under the $G$-action, which is a $G$-invariant closed subset of $\Rep_z(\Pi,G)$. Its image under the GIT quotient is closed, and is contained in $\Ch_{(s,\omega),z}(\Pi,G)$, since this is so generically. But it also contains the image of $f_{s,w}$, so it must coincide with $\Ch_{(s,\omega),z}(\Pi,G)$. 

We have
\begin{equation}\label{eq-Rep-str-irr}
\Rep^{\diamondsuit}_z(\Pi,G)=\Rep^{\heartsuit}_z(\Pi,G)\setminus\bigcup_{(s,\omega)}\overline{G.\Rep}_{\omega,z}(\Pi,H_s),
\end{equation}
where $s$ runs over a finite set of representatives of conjugacy classes of quasi-isolated semi-simple elements that do not lie in $Z_G$ and $\omega$ runs over all homomorphisms $\Pi\rightarrow\Gamma_s$ for a given $s$. If $\rho\in\Rep^{\heartsuit}_z(\Pi,G)$, then $G_{\rho}$ is a finite group containing $Z_G$. Suppose that $G_{\rho}$ strictly contains $Z_G$. Each element of $G_{\rho}\setminus Z_G$ is quasi-isolated and semi-simple, thus is conjugate to some $s$ as in (\ref{eq-Rep-str-irr}), so that $\rho$ lies in $G.\Rep_{\omega,z}(\Pi,H_s)$ for some $(s,\omega)$; hence the inclusion $\supset$. Note that each $G.\Rep_{\omega,z}(G,H_s)$ is contained in $\Rep^{\blacklozenge}_z(\Pi,G)$, which is closed, and so its closure is also contained in $\Rep^{\blacklozenge}_z(\Pi,G)$; hence the inclusion $\subset$. Passing to the quotient, we obtain
\begin{equation}
\Ch^{\diamondsuit}_z(\Pi,G)=\Ch^{\heartsuit}_z(\Pi,G)\setminus\bigcup_{(s,\omega)}\Ch_{(s,\omega),z}^{\heartsuit}(\Pi,G),
\end{equation}
where $(s,\omega)$ runs over the same set.

In what follows, we fix an elliptic endoscopic datum $(s,\omega)$ and write $H=H_s$ and $\Gamma=\Gamma_s$.
\begin{Lem}
Suppose that $\Rep_{\omega,z}(\Pi,H)$ has nonempty intersection with $\Rep^{\heartsuit}_z(\Pi,G)$. Then, we have
$$
\Rep^{\heartsuit}_z(\Pi,G)\cap\Rep_{\omega,z}(\Pi,H)\subset\Rep_{\omega,z}^{\heartsuit}(\Pi,H),
$$
and
$$
\dim\Rep^{\heartsuit}_{\omega,z}(\Pi,H)=(2g-1)\dim H.
$$
\end{Lem}
\begin{proof}
Suppose that $\Ima\rho$ is contained in a proper parabolic subgroup $P_{H,\lambda}$ of $H$ for some cocharacter $\lambda$. Then, $\Ima\rho$ is contained in the parabolic subgroup $P_{G,\lambda}$ of $G$ defined by $\lambda$, contradicting the irreducibility of $\rho$ as a $G$-representation. Now, we prove the dimension formula. Suppose that $\rho\in\Rep^{\heartsuit}_z(\Pi,G)\cap\Rep_{\omega,z}(\Pi,H)$. By \cite[Proposition 1.3 (d) and Corollary 2.9]{Bon}, the component group $\Gamma$ is commutative. This allows us to apply Proposition \ref{Prop-dim-RepGamma}. We need to show that $\dim Z_{H^{\circ}}^{\Pi}=0$, where $\Pi$ acts on $Z_{H^{\circ}}$ via $\rho$. But $\dim G^{\Pi}=\dim C_{G}(\Ima\rho)=0$ by Proposition \ref{Prop-st-irr} (ii), since $\rho$ is an irreducible $G$-representation.
\end{proof}
\begin{Cor}
With the same assumptions as in the above lemma, we have
$$
\dim\Ch_{\omega,z}^{\heartsuit}(\Pi,H)=(2g-2)\dim H.
$$
In particular,
$$
\dim\Ch_{(s,\omega),z}^{\heartsuit}(\Pi,G)\le(2g-2)\dim H.
$$
\end{Cor}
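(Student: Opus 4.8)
The plan is to prove the displayed equality first and then extract the inequality from it by a dimension count for a conjugation orbit.

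For the equality $\dim\Ch^{\heartsuit}_{\omega,z}(\Pi,H)=(2g-2)\dim H$, I would feed the preceding lemma into the description of the geometric quotient on the irreducible locus. By that lemma $\dim\Rep^{\heartsuit}_{\omega,z}(\Pi,H)=(2g-1)\dim H$, and by Proposition \ref{Prop-dim-RepGamma} this open set lies in the regular locus, so each of its connected components has that dimension. As recalled in Section \ref{sec-Rep-Ch}, the GIT quotient restricts to a geometric quotient $\Rep^{\heartsuit}_{\omega,z}(\Pi,H)\to\Ch^{\heartsuit}_{\omega,z}(\Pi,H)$ by $H^{\circ}$, all of whose fibres are single orbits. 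For $\rho\in\Rep^{\heartsuit}_{\omega,z}(\Pi,H)$, Proposition \ref{Prop-st-irr}(ii) (applied with $H$ in the role of $G$) gives $(\Stab_{H^{\circ}}\rho)^{\circ}=Z_{H_{\omega}}^{\circ}=(Z_{H^{\circ}}^{\Pi})^{\circ}$; but the proof of the preceding lemma shows $\dim Z_{H^{\circ}}^{\Pi}=0$, so this identity component is trivial and every orbit has dimension $\dim H^{\circ}=\dim H$. Hence $\dim\Ch^{\heartsuit}_{\omega,z}(\Pi,H)=(2g-1)\dim H-\dim H=(2g-2)\dim H$.

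For the inequality I would pass through representation varieties. Recall from Section \ref{sec-Rep-Ch} that $\Ch_{(s,\omega),z}(\Pi,G)$ is the image of the $G$-invariant closed set $\overline{G.\Rep}_{\omega,z}(\Pi,H)$ under the GIT quotient $\pi\colon\Rep_z(\Pi,G)\to\Ch_z(\Pi,G)$; since $\Rep^{\heartsuit}_z(\Pi,G)$ is the stable locus it is $G$-saturated, so $\Ch^{\heartsuit}_{(s,\omega),z}(\Pi,G)=\pi\bigl(\overline{G.\Rep}_{\omega,z}(\Pi,H)\cap\Rep^{\heartsuit}_z(\Pi,G)\bigr)$. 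Write $\Rep_{\omega,z}(\Pi,H)=R^{\heartsuit}\sqcup R^{\mathrm{red}}$ with $R^{\heartsuit}=\Rep_{\omega,z}(\Pi,H)\cap\Rep^{\heartsuit}_z(\Pi,G)$; points of $R^{\mathrm{red}}$ are reducible as $G$-representations, so $G.R^{\mathrm{red}}\subset\Rep^{\spadesuit}_z(\Pi,G)$, a closed set, whence $\overline{G.R^{\mathrm{red}}}$ is disjoint from $\Rep^{\heartsuit}_z(\Pi,G)$ and
\[
\overline{G.\Rep}_{\omega,z}(\Pi,H)\cap\Rep^{\heartsuit}_z(\Pi,G)=\overline{G.R^{\heartsuit}}\cap\Rep^{\heartsuit}_z(\Pi,G).
\]
I would then bound $\dim G.R^{\heartsuit}$ via the conjugation morphism $G\times R^{\heartsuit}\to\Rep_z(\Pi,G)$: its image lies in the stable locus, and the fibre over any $\rho_1$ in the image is the set of $g\in G$ with $gsg^{-1}\in\Stab_G(\rho_1)$ and $g^{-1}\rho_1g$ inducing $\omega$, which — as $\Stab_G(\rho_1)$ is finite and $C_G(s)=H$ — is a non-empty finite union of cosets of $H$, of dimension exactly $\dim H$. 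Hence $\dim G.R^{\heartsuit}=\dim G+\dim R^{\heartsuit}-\dim H$, and the second assertion of the preceding lemma gives $R^{\heartsuit}\subset\Rep^{\heartsuit}_{\omega,z}(\Pi,H)$, so $\dim R^{\heartsuit}\le(2g-1)\dim H$. Finally $\pi$ restricted to the stable locus is a geometric quotient by $G$ with all fibres orbits of dimension $\dim G$, so $\dim\Ch^{\heartsuit}_{(s,\omega),z}(\Pi,G)\le\dim\overline{G.R^{\heartsuit}}-\dim G=\dim G.R^{\heartsuit}-\dim G\le(2g-2)\dim H$.

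I expect the only genuine work to be in the second step. Two points need care: separating off the reducible part $R^{\mathrm{red}}$, which uses that $\Rep^{\spadesuit}_z(\Pi,G)$ is closed and $\Rep^{\heartsuit}_z(\Pi,G)$ is saturated so that the passage to the closure $\overline{G.\Rep}_{\omega,z}(\Pi,H)$ adds nothing to the stable locus beyond $\overline{G.R^{\heartsuit}}$; and checking that the conjugation morphism has constant fibre dimension $\dim H$ over the stable locus, which rests on finiteness of stabilisers of irreducible representations (Proposition \ref{Prop-st-irr}) together with $s$ having full centraliser $H$. Once those are in place, the rest is routine bookkeeping with dimensions of geometric quotients.
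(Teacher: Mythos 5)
Your proof is correct, and it follows the route the paper intends: the paper states this corollary without a written proof, treating it as immediate from the preceding lemma via the orbit-dimension count for the geometric quotient $\Rep^{\heartsuit}_{\omega,z}(\Pi,H)\to\Ch^{\heartsuit}_{\omega,z}(\Pi,H)$ (stabilisers are finite since $\dim Z_{H^{\circ}}^{\Pi}=0$) and the description of $\Ch_{(s,\omega),z}(\Pi,G)$ as the quotient image of $\overline{G.\Rep}_{\omega,z}(\Pi,H)$. Your second step simply spells out the details the paper leaves implicit — saturatedness of the stable locus, discarding $\overline{G.R^{\mathrm{red}}}$ via the closed set $\Rep^{\spadesuit}_z(\Pi,G)$ (important, since $\dim\Ch_{\omega,z}(\Pi,H)$ itself is not controlled), and the fibre dimension $\dim H$ of the conjugation map using finiteness of $\Stab_G(\rho_1)$ and $C_G(s)=H$ — all of which are handled correctly.
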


\begin{Prop}\label{Prop-EEL-4}
For any elliptic endoscopic datum $(s,\omega)$ such that $s$ is not central and that
$$
\Rep_{\omega,z}(\Pi,H)\cap\Rep^{\heartsuit}_z(\Pi,G)\ne\emptyset,
$$
we have $\codim\Ch^{\heartsuit}_{(s,\omega),z}(\Pi,G)\ge 4$.
\end{Prop}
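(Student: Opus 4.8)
The plan is to extract the codimension from the two dimension estimates already available and then check that the resulting gap is at least $4$ for every $g>1$. Since $G$ is semi-simple, $Z_G$ is finite, so Corollary~\ref{Cor-dim-Ch} gives $\dim\Ch^{\heartsuit}_z(\Pi,G)=(2g-2)\dim G$, while the Corollary immediately preceding this proposition (whose hypothesis is exactly the one assumed here) gives $\dim\Ch^{\heartsuit}_{(s,\omega),z}(\Pi,G)\le(2g-2)\dim H$. Because $\Ch^{\heartsuit}_z(\Pi,G)$ is open and dense in $\Ch_z(\Pi,G)$, the codimension can be computed inside it, and subtracting the two estimates gives $\codim\Ch^{\heartsuit}_{(s,\omega),z}(\Pi,G)\ge(2g-2)(\dim G-\dim H)$.

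It then remains only to prove $\dim G-\dim H\ge 2$, the sole endoscopy-specific input. Here I would argue that $s$ semi-simple forces $H=C_G(s)$ to contain a maximal torus $T$ with $s\in T$, and $\Lie H=\mathfrak{t}\oplus\bigoplus_{\alpha(s)=1}\mathfrak{g}_{\alpha}$, the sum over the roots $\alpha$ of $(G,T)$ with $\alpha(s)=1$; hence $\dim G-\dim H$ equals the number of roots $\alpha$ with $\alpha(s)\ne 1$. That set of roots is stable under $\alpha\mapsto-\alpha$ (as $(-\alpha)(s)=\alpha(s)^{-1}$), so its cardinality is even, and it is nonempty exactly because $s\notin Z_G$; therefore $\dim G-\dim H\ge 2$. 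Combining the two paragraphs,
\[
\codim\Ch^{\heartsuit}_{(s,\omega),z}(\Pi,G)\ \ge\ (2g-2)(\dim G-\dim H)\ \ge\ 2(2g-2)\ =\ 4g-4\ \ge\ 4
\]
since $g>1$.

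There is no real obstacle here: the geometric content is already in place from Section~\ref{sec-DCh}, where the dimension formula for representation varieties valued in the possibly nonconnected group $H$ (Proposition~\ref{Prop-dim-RepGamma}, applicable because $\Gamma_s$ is commutative by Bonnaf\'e's classification) is precisely what makes the preceding Corollary available. The one point requiring care is simply to invoke that Corollary under its stated hypothesis --- that $\Rep_{\omega,z}(\Pi,H)$ meets $\Rep^{\heartsuit}_z(\Pi,G)$ --- which is among the hypotheses of the present proposition, so no difficulty arises. I would end by remarking that this estimate, together with the finiteness of the set of elliptic endoscopic data, will yield the codimension-$\ge 4$ bound for the full locus $\Ch^{\heartsuit}_z(\Pi,G)\setminus\Ch^{\diamondsuit}_z(\Pi,G)$ used in the sequel.
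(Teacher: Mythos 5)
Your proposal is correct and follows essentially the same route as the paper: combine the bound $\dim\Ch^{\heartsuit}_{(s,\omega),z}(\Pi,G)\le(2g-2)\dim H$ with $\dim\Ch_z(\Pi,G)=(2g-2)\dim G$ (using $\dim Z_G=0$) and reduce to $\dim G-\dim H\ge 2$. Your justification of that last inequality via counting the roots with $\alpha(s)\ne 1$ (even in number by the $\alpha\mapsto-\alpha$ symmetry, nonempty since $s\notin Z_G$) is just a root-level restatement of the paper's comparison of Borel subgroups of $H^{\circ}$ and $G$, so there is no substantive difference.
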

\begin{proof}
It suffices to show that $(2g-2)\dim G-(2g-2)\dim H\ge 4$. Since $g>1$, this is equivalent to $\dim G-\dim H\ge2$. Since $G$ and $H^{\circ}$ contain a common maximal torus $T$, the desired inequality is equivalent to the fact that a Borel subgroup of $H^{\circ}$ is strictly smaller than a Borel subgroup of $G$ containing it, which is true as long as $s$ is not central.
\end{proof}
\begin{Cor}\label{Cor-Rep-EEL-4}
The codimension of the inverse image of $\Ch^{\heartsuit}_{(s,\omega),z}(\Pi,G)$ in $\Rep^{\heartsuit}_z(\Pi,G)$ is at least four.
\end{Cor}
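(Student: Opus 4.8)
The plan is to deduce this immediately from Proposition \ref{Prop-EEL-4} by transporting the codimension bound from the character variety up to the representation variety along the quotient map $\pi\colon\Rep^{\heartsuit}_z(\Pi,G)\to\Ch^{\heartsuit}_z(\Pi,G)$, which on the stable locus is equidimensional.

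First I would pin down the set in question. Because $\Rep^{\heartsuit}_z(\Pi,G)$ is saturated and all of its $G$-orbits are closed (Proposition \ref{Prop-st-irr}), the inverse image of $\Ch^{\heartsuit}_{(s,\omega),z}(\Pi,G)$ is exactly $\overline{G.\Rep}_{\omega,z}(\Pi,H_s)\cap\Rep^{\heartsuit}_z(\Pi,G)$: the fibre of $\pi$ over a point in the image of the $G$-invariant closed set $\overline{G.\Rep}_{\omega,z}(\Pi,H_s)$ must meet that set. I would also clear away a degenerate case: if $\Rep_{\omega,z}(\Pi,H_s)$ does not meet $\Rep^{\heartsuit}_z(\Pi,G)$, then $\Rep_{\omega,z}(\Pi,H_s)$, its $G$-saturation, and its closure all lie inside the closed $G$-invariant set $\Rep^{\spadesuit}_z(\Pi,G)$, so the inverse image is empty and there is nothing to prove; otherwise the hypotheses of Proposition \ref{Prop-EEL-4} hold and $\codim\Ch^{\heartsuit}_{(s,\omega),z}(\Pi,G)\ge 4$.

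It then remains to carry this bound up to $\Rep^{\heartsuit}_z(\Pi,G)$. Since $G$ is semisimple, $Z_G$ is finite and each $\rho\in\Rep^{\heartsuit}_z(\Pi,G)$ has finite stabiliser (Proposition \ref{Prop-st-irr}(ii)), so every fibre of $\pi$ is a $G$-orbit of dimension exactly $\dim G$; combining Propositions \ref{Prop-dim-RepGamma} and \ref{Prop-dim-Rep}, $\Rep^{\heartsuit}_z(\Pi,G)$ is pure of dimension $(2g-1)\dim G$, while $\Ch^{\heartsuit}_z(\Pi,G)$ is pure of dimension $(2g-2)\dim G$ by Corollary \ref{Cor-dim-Ch}. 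As $G$ is connected it preserves each irreducible component $W$ of $\Rep^{\heartsuit}_z(\Pi,G)$, so $\pi(W)$ is closed and irreducible, and since the fibres of $\pi|_W$ all have dimension $\dim G$ it is a component of $\Ch^{\heartsuit}_z(\Pi,G)$; then for $Z:=\Ch^{\heartsuit}_{(s,\omega),z}(\Pi,G)$ the fibre-dimension inequality gives $\dim(\pi^{-1}(Z)\cap W)\le\dim(Z\cap\pi(W))+\dim G\le\dim Z+\dim G$, hence $\codim_W(\pi^{-1}(Z)\cap W)\ge\dim\Ch^{\heartsuit}_z(\Pi,G)-\dim Z\ge 4$, and minimising over $W$ concludes. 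There is no real obstacle here — the mathematical content is entirely in Proposition \ref{Prop-EEL-4} — and the only point needing a little care is the possible reducibility of $\Rep^{\heartsuit}_z(\Pi,G)$, which is why the codimension comparison is best done component by component.
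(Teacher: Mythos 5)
Your proposal is correct and is essentially the paper's own proof, which observes in one line that all $G$-orbits in $\Rep^{\heartsuit}_z(\Pi,G)$ are closed and of maximal dimension $\dim G$ (stabilisers being finite for semi-simple $G$), so that codimensions of $G$-invariant closed subsets are preserved when passing to the GIT quotient and Proposition \ref{Prop-EEL-4} transports directly. Your extra bookkeeping (identifying the preimage with $\overline{G.\Rep}_{\omega,z}(\Pi,H_s)\cap\Rep^{\heartsuit}_z(\Pi,G)$, the empty degenerate case, and the component-by-component fibre-dimension estimate) just makes that same argument explicit.
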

\begin{proof}
All $G$-orbits in $\Rep^{\heartsuit}_z(\Pi,G)$ are closed with maximal dimensions, so the codimensions of $G$-invariant closed subsets are preserved under the GIT quotient.
\end{proof}
\begin{Cor}
Let $G$ be a reductive group. Then, the open subset $\Ch_z^{\diamondsuit}(\Pi,G)$ (resp. $\Rep_z^{\diamondsuit}(\Pi,G)$) is dense in $\Ch_z(\Pi,G)$ (resp. $\Rep_z(\Pi,G)$). 
\end{Cor}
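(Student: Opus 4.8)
The plan is to reduce the statement to the density of $\Rep^{\diamondsuit}_z(\Pi,G)$ in $\Rep_z(\Pi,G)$, and then to the case of a semi-simple group, where the decomposition (\ref{eq-Rep-str-irr}) and the codimension estimate of Corollary \ref{Cor-Rep-EEL-4} do all the work. First, since the affine GIT quotient $\pi\colon\Rep_z(\Pi,G)\to\Ch_z(\Pi,G)$ is surjective and continuous and carries $\Rep^{\diamondsuit}_z(\Pi,G)$ onto $\Ch^{\diamondsuit}_z(\Pi,G)$ by definition, it is enough to prove the $\Rep$-version; we may also assume $\Rep_z(\Pi,G)\neq\emptyset$. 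Using the isomorphism (\ref{eq-Rep-G-G1}) and Lemma \ref{Lem-isom-preser-irr}, the pair $\bigl(\Rep_z(\Pi,G),\Rep^{\diamondsuit}_z(\Pi,G)\bigr)$ is, up to a free action of a finite group, $\bigl(\Rep(\Pi,Z_G^{\circ})\times\Rep_z(\Pi,G_1),\ \Rep(\Pi,Z_G^{\circ})\times\Rep^{\diamondsuit}_z(\Pi,G_1)\bigr)$ with $G_1$ the (semi-simple) derived subgroup; since $\Rep(\Pi,Z_G^{\circ})$ is irreducible and finite quotients preserve density, this reduces us to $G$ semi-simple.

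Assume $G$ is semi-simple. By Proposition \ref{Prop-dim-Rep}, $\Rep^{\heartsuit}_z(\Pi,G)$ is a dense, pure-dimensional open subset of $\Rep_z(\Pi,G)$, so it suffices to show $\Rep^{\diamondsuit}_z(\Pi,G)$ is dense in $\Rep^{\heartsuit}_z(\Pi,G)$, i.e. that the closed complement $\Rep^{\heartsuit}_z(\Pi,G)\setminus\Rep^{\diamondsuit}_z(\Pi,G)$ has positive codimension there. By (\ref{eq-Rep-str-irr}), this complement is the finite union, over the elliptic endoscopic data $(s,\omega)$ with $s\notin Z_G$, of the closed sets $\overline{G.\Rep}_{\omega,z}(\Pi,H_s)\cap\Rep^{\heartsuit}_z(\Pi,G)$, so it is enough to bound the codimension of each of these.

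Fix such an $(s,\omega)$. If $\overline{G.\Rep}_{\omega,z}(\Pi,H_s)$ misses the open set $\Rep^{\heartsuit}_z(\Pi,G)$, the corresponding set is empty. Otherwise, since $G.\Rep_{\omega,z}(\Pi,H_s)$ is dense in its closure it already meets $\Rep^{\heartsuit}_z(\Pi,G)$, whence by $G$-invariance so does $\Rep_{\omega,z}(\Pi,H_s)$, and the hypothesis of Proposition \ref{Prop-EEL-4} is satisfied. Moreover $\overline{G.\Rep}_{\omega,z}(\Pi,H_s)\cap\Rep^{\heartsuit}_z(\Pi,G)$ is $G$-invariant and closed inside the stable locus $\Rep^{\heartsuit}_z(\Pi,G)$, on which $\pi$ restricts to a geometric quotient; as $\pi\bigl(\overline{G.\Rep}_{\omega,z}(\Pi,H_s)\bigr)=\Ch_{(s,\omega),z}(\Pi,G)$, this set is exactly the preimage of $\Ch^{\heartsuit}_{(s,\omega),z}(\Pi,G)$ in $\Rep^{\heartsuit}_z(\Pi,G)$, and Corollary \ref{Cor-Rep-EEL-4} gives codimension at least four. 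Hence $\Rep^{\heartsuit}_z(\Pi,G)\setminus\Rep^{\diamondsuit}_z(\Pi,G)$ is a proper closed subset, so $\Rep^{\diamondsuit}_z(\Pi,G)$ is dense in $\Rep^{\heartsuit}_z(\Pi,G)$ and hence in $\Rep_z(\Pi,G)$; unwinding the reductions and pushing forward through $\pi$ finishes both assertions.

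All the hard estimates are already available (Corollary \ref{Cor-Rep-EEL-4}, resting on Proposition \ref{Prop-dim-RepGamma}), so the only delicate point — the one I would expect to trip up a careless write-up — is the dichotomy in the third paragraph: one must notice that a closure of a $G$-stable constructible set meets an open set precisely when the set itself does, so that the endoscopic data failing the nonemptiness hypothesis of Proposition \ref{Prop-EEL-4} simply contribute the empty stratum, and no separate dimension count for them (e.g. via Corollary \ref{Cor-dim-Rep-P} / Proposition \ref{Prop-codim-noB2} after showing $H_s$ sits in a proper parabolic) is required.
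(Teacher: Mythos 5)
Your argument is correct and is essentially the paper's own proof: density of $\Rep_z^{\heartsuit}$ (resp.\ $\Ch_z^{\heartsuit}$) from Proposition \ref{Prop-dim-Rep} and Corollary \ref{Cor-dim-Ch}, the codimension-four bound on the finitely many elliptic endoscopic strata from Proposition \ref{Prop-EEL-4} and Corollary \ref{Cor-Rep-EEL-4}, and the reduction to the semi-simple case via Lemma \ref{Lem-isom-preser-irr}. The extra details you supply (the dichotomy guaranteeing the nonemptiness hypothesis of Proposition \ref{Prop-EEL-4}, and deducing the $\Ch$-statement by pushing the $\Rep$-statement through the quotient) are exactly what the paper's terse proof leaves implicit.
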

\begin{proof}
We first assume that $G$ is semi-simple. By Proposition \ref{Prop-dim-Rep} and Corollary \ref{Cor-dim-Ch}, the open subset $\Ch_z^{\heartsuit}(\Pi,G)$ (resp. $\Rep_z^{\heartsuit}(\Pi,G)$) is dense in $\Ch_z(\Pi,G)$ (resp. $\Rep_z(\Pi,G)$). The assertion now follows from Proposition \ref{Prop-EEL-4} and Corollary \ref{Cor-Rep-EEL-4}. By Lemma \ref{Lem-isom-preser-irr}, the Corollary is also true for reductive groups.
\end{proof}
\begin{Cor}
For any reductive group $G$, the open subset $\Ch_z^{\diamondsuit}(\Pi,G)$ is precisely the smooth locus of $\Ch_z(\Pi,G)$.
\end{Cor}
\begin{proof}
We need to show that for any $\rho\in\Rep^{\heartsuit}_z(\Pi,G)\setminus\Rep^{\diamondsuit}_z(\Pi,G)$, the corresponding point $[\rho]$ of $\Ch_z(\Pi,G)$ is singular. Assume that $G$ is semi-simple; hence, the stabiliser $G_{\rho}$ is finite. Let $V$ be an \'etale slice containing $\rho$. Since $\Rep_z(\Pi,G)$ is smooth at $\rho$, we may assume that $V$ is smooth. The formal neighbourhood of $\Ch_z(\Pi,G)$ at $[\rho]$ is isomorphic to the formal neighbourhood of $V\ds G_{\rho}$ at $\rho$. It follows from the Chevalley-Shephard-Todd theorem that $V\ds G_{\rho}$ is smooth at $\rho$ if and only if $G_{\rho}$ acts as a reflection group on the formal neighbourhood of $V$ at $\rho$. However, the locus of points in $\Rep^{\heartsuit}_z(\Pi,G)$ with nontrivial stabiliser groups has codimension at least four by Corollary \ref{Cor-Rep-EEL-4}; therefore, $G_{\rho}$ does not act as a reflection group (recall that a reflection fixes a codimension one subspace) and $V\ds G_{\rho}$ is singular at $\rho$. For a reductive group $G$, we use isomorphism (\ref{eq-Ch-G-G1}) and Lemma \ref{Lem-isom-preser-irr}. We have shown that $\Ch_z^{\diamondsuit}(\Pi,G)$ is the smooth locus of $\Ch_z^{\heartsuit}(\Pi,G)$. If the orbit of $\rho$ is closed but not stable, then $[\rho]$ is a singular point of $\Ch_z(\Pi,G)$ according to \cite[\S 7.2]{HSS}.
\end{proof}
\begin{Rem}
As was clarified by Sikora in \cite{Sik}, Goldman's construction of symplectic structure only works in $\Ch_z^{\diamondsuit}(\Pi,G)$, which is a priori smaller than the smooth locus. Now we know that this method does define a symplectic form on the whole smooth locus.
\end{Rem}

\section{Main results for $g>1$}\label{sec-Main}

We will use the dimension estimates of the previous section to describe the singularities of $\Ch_z(\Pi,G)$. We begin with the case of almost simple groups in \S \ref{subsec-SymSing}. In \S \ref{subsec-main-red} we extend our results to the case of general reductive groups.

\subsection{Symplectic singularities}\label{subsec-SymSing}\hfill

We are ready to prove the main results in the case of almost simple groups.
\begin{Prop}\label{Prop-alm-sim-sing}
Let $G$ be an almost simple algebraic group and $z\in Z_G$. Then, every connected component of the character variety $\Ch_z(\Pi,G)$ has symplectic singularities, and has terminal singularities unless it is the identity component of $\Ch(\Pi,G)$ with $(g,\rk G)=(2,1)$.
\end{Prop}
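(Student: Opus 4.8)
The plan is to verify the three defining properties of symplectic singularities for each connected component $X$ of $\Ch_z(\Pi,G)$, and then separately to decide terminality via the codimension of the singular locus. First I would record that $X$ is reduced and normal: this is Corollary \ref{Cor-Ch-g>1-red-norm}, which in turn rests on Proposition \ref{Prop-dim-Rep} (the representation variety is a complete intersection) together with the codimension estimates of \S\ref{subsec-red} and Serre's criterion (Fact \ref{Fact1}~(1)). Next, $X$ carries a symplectic form on its smooth locus: Goldman's construction \cite{G1}, valid also in the twisted case by Lemma \ref{Lem-isog-symp}, supplies a symplectic (equivalently Poisson, nondegenerate on the stable locus) form on the smooth part of $\Ch_z(\Pi,G)$, and it restricts to $X$.

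The crux is then to show that for any resolution $f\colon Y\to X$, the pullback $f^{\ast}\omega$ extends to a regular $2$-form on $Y$. By Fact \ref{Fact1}~(3) (Flenner's extension theorem) it suffices to prove that the singular locus $X^{\mathrm{sing}}$ has codimension at least $4$ in $X$. I would split $X^{\mathrm{sing}}$ into the two strata identified in the introduction: the part lying outside $\Ch^{\heartsuit}_z(\Pi,G)$ and the part inside it. For the first, $\Ch^{\spadesuit}_z(\Pi,G)$ is covered by the images $i_{L,G}(\Ch_z(\Pi,L))$ over maximal proper Levi subgroups $L$, and Corollary \ref{Cor-red-4} gives codimension $\ge 4$ for each such image (here one uses $g>1$, and the hypothesis $\rk G>1$ when $g=2$ excludes exactly the problematic $A_1$ case). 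For the second, $\Ch^{\heartsuit}_z(\Pi,G)\setminus\Ch^{\diamondsuit}_z(\Pi,G)$ is the union of the elliptic endoscopic loci $\Ch^{\heartsuit}_{(s,\omega),z}(\Pi,G)$, and Proposition \ref{Prop-EEL-4} gives codimension $\ge 4$ for each of these; away from these loci $X$ is smooth because the $G$-action on $\Rep^{\diamondsuit}_z(\Pi,G)$ is free modulo the centre and the representation variety is smooth there by Proposition \ref{Prop-dim-RepGamma}. Combining, $X^{\mathrm{sing}}$ has codimension $\ge 4$, so Flenner applies and $X$ has symplectic singularities; by Namikawa's criterion Fact \ref{Fact1}~(5) the same codimension bound gives terminality, under exactly the stated hypothesis.

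The one genuinely delicate point is the excluded case $(g,\rk G)=(2,1)$ and, more generally, the small-rank exceptions where the naive codimension count in \S\ref{subsec-red} only yields $\ge 2$ rather than $\ge 4$. For $G$ of type $A_1$ with $g=2$ the identity component really does fail to be terminal, so it must be handled by hand: one checks directly (as in Bellamy--Schedler for $\SL_2$, invoking \cite[Lemma 11.5]{Si} for normality and Goldman for the symplectic form on the smooth locus) that $\Ch(\Pi,G)$ still has symplectic singularities even though the singular locus is only of codimension $2$. For the remaining low-rank types ($A_2,A_3,A_4$ and $C_2$ when $g=2$), which are \emph{not} excluded from the statement, the refined estimates Proposition \ref{Prop-codim-noB2}, Proposition \ref{Prop-B2-Comp-4} and Corollary \ref{Cor-fac-Rep} were engineered precisely to restore the codimension-$4$ bound (e.g.\ the ``dimension drop by four'' arguments for reducible representations factoring through $L'$), so in those cases the argument above goes through verbatim once one feeds in those propositions in place of the crude bound. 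So the main obstacle is not a new idea but bookkeeping: routing each small-rank type to the correct refined estimate, and isolating the genuine exception $(g,\rk G)=(2,1)$ for separate treatment.
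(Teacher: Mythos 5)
Your route is essentially the paper's: Goldman (or Alekseev--Malkin--Meinrenken quasi-Hamiltonian reduction in the twisted case) for the symplectic form on the smooth locus, the two-stratum decomposition of the singular locus into the reducible locus (Corollary \ref{Cor-red-4}) and the elliptic endoscopic loci (Proposition \ref{Prop-EEL-4}), then Flenner's extension theorem for symplectic singularities and Namikawa's criterion for terminality, with the $A_1$, $g=2$ identity component delegated to Bellamy--Schedler.

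Two points need attention, though. First, a genuine (if small) omission: the statement also asserts terminality for the \emph{non-identity} components when $(g,\rk G)=(2,1)$, namely $\Ch_{-1}(\Pi,\SL_2)$ and the nonidentity component of $\Ch(\Pi,\PGL_2)$. Your argument excludes rank~$1$ at $g=2$ from Corollary \ref{Cor-red-4} and then only treats the identity component by hand, so these components are not covered. The paper handles them by observing that they consist entirely of irreducible representations, so the reducible stratum is empty and the same codimension-$\ge 4$ argument (via Proposition \ref{Prop-EEL-4}) applies; you should add this step. Second, a misrouting in your last paragraph: Propositions \ref{Prop-codim-noB2}, \ref{Prop-B2-Comp-4} and Corollary \ref{Cor-fac-Rep} are refinements at the level of the \emph{representation} variety, needed for its normality and factoriality (hence normality of $\Ch_z(\Pi,G)$ via the GIT quotient, and later $\mathbb{Q}$-factoriality); they are not what restores the codimension-$4$ bound on the character variety for the low-rank $g=2$ types. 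That bound comes directly from Corollary \ref{Cor-red-4}, valid for all $\rk G>1$ when $g=2$, because passing to the quotient adds back the orbit dimension. Your main argument already cites Corollary \ref{Cor-red-4} correctly, so this is a matter of attribution rather than correctness.
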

\begin{proof}
We first consider the cases $(g,\rk G)\ne(2,1)$. The symplectic structure on $\Ch^{\diamondsuit}(\Pi,G)$ was constructed by Goldman \cite{G1}. For the twisted character varieties, the simplest way to obtain the symplectic structure on $\Ch^{\diamondsuit}_z(\Pi,G)$ is via the quasi-Hamiltonian theory (we use the algebraic version due to Boalch \cite{Boa07}; see also \cite[Theorem 5.1]{AMM} for Alekseev-Malkin-Meinrenken's original version for compact groups). By Proposition \ref{Cor-red-4} and Proposition \ref{Prop-EEL-4}, the codimension of $\Ch^{\blacklozenge}_z(\Pi,G)$ is at least four. It follows from Flenner's theorem (see Fact \ref{Fact1} (3)) that $\Ch_z(\Pi,G)$ has symplectic singularities; thus, it is terminal by Fact \ref{Fact1} (5). Now suppose $(g,\rk G)=(2,1)$. The twisted character variety $\Ch_{-1}(\Pi,\SL_2)$ and the nonidentity component of $\Ch(\Pi,\PGL_2)$ consist of irreducible representations, and the same arguments show that they also have terminal symplectic singularities. Bellamy-Schedler \cite{BS} proved that for type $A_1$ groups the identity components of character varieties have symplectic singularities and are not terminal.
\end{proof}

\subsection{Results for general reductive groups}\label{subsec-main-red}\hfill

The discussions of \S \ref{subsec-DCharVar} allow us to reduce the problems about general reductive groups to those about almost simple groups. We will use the following notations. Let $G$ be a reductive group and let $G_1$ be its derived subgroup. Let $\tilde{G}\rightarrow G_1$ be the simply connected cover of $G_1$ and denote by $Z$ the kernel. Write $\tilde{G}\cong\prod_i\tilde{G}_i$ where each $\tilde{G}_i$ is almost simple. For any $z\in Z$, the quotient of $\Ch_z(\Pi,\tilde{G})$ by $Z^{2g}$ is a connected component of $\Ch(\Pi,G_1)$, which we denote by $\Ch(\Pi,G_1)_z$. In view of Lemma \ref{Lem-Ch-conn-comp}, we may denote by $\Ch(\Pi,G)_z$ the corresponding connected component of $\Ch(\Pi,G)$.

\begin{Thm}\label{Thm-reductive-sympsing}
For any reductive group $G$, the character variety $\Ch(\Pi,G)$ is (reduced and) normal and has symplectic singularities.
\end{Thm}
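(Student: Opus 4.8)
\noindent\textit{Proof proposal.} The plan is to reduce the statement, one connected component at a time, to the case of simply connected almost simple groups already settled in \S\ref{subsec-SymSing} and \S\ref{subsec-Fac}, and then to transport the conclusions through the operations of forming products and finite quotients. Fix a connected component $\Ch(\Pi,G)_z$. Combining Lemma \ref{Lem-Ch-conn-comp}, the product splitting $\Ch(\Pi,Z_G^{\circ}\times G_1)\cong\Ch(\Pi,Z_G^{\circ})\times\Ch(\Pi,G_1)$, Lemma \ref{Lem-Ch-isog}, and the evident decomposition $\Ch_{z_2}(\Pi,\tilde{G})\cong\prod_i\Ch_{z_{2,i}}(\Pi,\tilde{G}_i)$, one writes
\[
\Ch(\Pi,G)_z\cong\Big(\Ch(\Pi,Z_G^{\circ})\times\prod_i\Ch_{z_{2,i}}(\Pi,\tilde{G}_i)\Big)\big/\Lambda,
\]
where $\Lambda$ is a finite group built out of $K^{2g}$ and $Z^{2g}$, acting by translations on the factors. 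In this product, $\Ch(\Pi,Z_G^{\circ})=(Z_G^{\circ})^{2g}$ is a smooth affine symplectic variety, hence reduced, normal, factorial, and with symplectic singularities trivially; and each $\Ch_{z_{2,i}}(\Pi,\tilde{G}_i)$ is reduced and normal by Corollary \ref{Cor-Ch-g>1-red-norm} and has symplectic singularities by Proposition \ref{Prop-alm-sim-sing}.

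Next I would propagate the properties through the two operations. Reducedness and normality are preserved under finite products and under finite quotients, which already gives that $\Ch(\Pi,G)$ is reduced and normal (alternatively one may deduce this from the corresponding statement for $\Rep(\Pi,G)$ via Fact \ref{Fact1}(0)). For symplectic singularities I would invoke three facts: a product of varieties with symplectic singularities again has symplectic singularities (resolve each factor and note that the pullback of the sum of the symplectic forms extends); a smooth variety carrying a symplectic form has symplectic singularities; and Beauville's theorem that a finite quotient of a variety with symplectic singularities by a group acting by symplectic automorphisms again has symplectic singularities (\cite{Beau}). The hypothesis of the last one is exactly Lemma \ref{Lem-isog-symp}, which guarantees that $\Lambda$ preserves the product symplectic form. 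Hence every $\Ch(\Pi,G)_z$ has symplectic singularities.

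Finally, for $\mathbb{Q}$-factoriality I would specialise to the identity component, so $z=1$ and each $z_{2,i}=1$. Each $\Ch(\Pi,\tilde{G}_i)$ is $\mathbb{Q}$-factorial by Proposition \ref{Prop-Q-fac}; since $\Ch(\Pi,Z_G^{\circ})$ is factorial and a product of $\mathbb{Q}$-factorial varieties is $\mathbb{Q}$-factorial by Boissi\`ere--Gabber--Serman \cite{BGS}, the product $\Ch(\Pi,Z_G^{\circ})\times\prod_i\Ch(\Pi,\tilde{G}_i)$ is a normal $\mathbb{Q}$-factorial variety, and a finite quotient of such a variety is again $\mathbb{Q}$-factorial by Benoist \cite[Theorem 3.8.1]{Ben}. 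Therefore the identity component of $\Ch(\Pi,G)$ is $\mathbb{Q}$-factorial.

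The point I expect to require the most care is not geometric but bookkeeping: checking that the chain of isomorphisms of \S\ref{subsec-DCharVar} genuinely presents an arbitrary connected component as a finite quotient of a product of (identity or suitably twisted) almost simple character varieties with the correct central twists, and that the finite group $\Lambda$ that appears is precisely one of the translation groups covered by Lemma \ref{Lem-isog-symp}, so that the stability statements of Beauville and Benoist apply verbatim. No new dimension estimate is needed beyond those already established.
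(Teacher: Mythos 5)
Your proposal is correct and follows essentially the same route as the paper's proof: the same reduction through the isomorphisms of \S\ref{subsec-DCharVar} to the almost simple case, with normality and symplectic singularities supplied by Corollary \ref{Cor-Ch-g>1-red-norm} and Proposition \ref{Prop-alm-sim-sing}, propagated through products and finite quotients via Lemma \ref{Lem-isog-symp} and Beauville's finite symplectic quotient theorem, and $\mathbb{Q}$-factoriality obtained from Proposition \ref{Prop-Q-fac} together with the Boissi\`ere--Gabber--Serman and Benoist results. The only cosmetic difference is that you take the $K^{2g}$ and $Z^{2g}$ quotients in a single combined step where the paper proceeds in stages ($\tilde{G}$, then $G_1$, then $G$), which changes nothing of substance.
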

\begin{proof}
We first prove the assertions for $\tilde{G}\cong\prod_i\tilde{G}_i$. For each $\tilde{G}_i$, let $z_i\in Z_{\tilde{G}_i}$, and write $z=(z_i)_i\in Z_{\tilde{G}}$. By Corollary \ref{Cor-Ch-g>1-red-norm} and Proposition \ref{Prop-alm-sim-sing}, the character varieties $\Ch_{z_i}(\Pi,\tilde{G}_i)$ are normal and have symplectic singularities. Normality is obviously preserved under direct products. A direct product of symplectic singularities is a symplectic singularity, which directly follows from the definition. It follows that $\Ch_z(\Pi,\tilde{G})$ is normal and has symplectic singularities. Now we consider $G_1$. Suppose $z\in Z$. By Lemma \ref{Lem-Ch-isog}, the quotient of $\Ch_z(\Pi,\tilde{G})$ by $Z^{2g}$ is a connected component of $\Ch(\Pi,G_1)$. Recall that a finite symplectic quotient of a symplectic singularity is a symplectic singularity (\cite[Proposition 2.4]{Beau}). By Lemma \ref{Lem-isog-symp}, the action of $Z^{2g}$ on $\Ch_z(\Pi,\tilde{G})$ is symplectic, and so the quotient variety has symplectic singularities. The proposition is proved for $G_1$. As a finite quotient of $\Ch(\Pi,Z_G^{\circ}\times G_1)$, the variety $\Ch(\Pi,G)$ is normal. Since the quotient is symplectic by Lemma \ref{Lem-isog-symp}, it also has symplectic singularities.
\end{proof}

\begin{Thm}\label{Thm-Q-fac}
For any reductive group $G$, every connected component of the character variety $\Ch(\Pi,G)$ is $\mathbb{Q}$-factorial. Moreover, a connected component $\Ch(\Pi,G)_z$ with $z\in Z$ is factorial if
\begin{itemize}
\item $g>2$, or
\item $g=2$ and for every direct factor $\tilde{G}_i$ of $\tilde{G}$ such that $\tilde{G}_i\cong\SL_2$, we have $z_i\ne 1$.
\end{itemize}
\end{Thm}
\begin{proof}
We first show that under the conditions above the connected component $\Ch(\Pi,G)_z$ is factorial. It is a classical result of Popov that the quotient of an affine factorial variety by a semi-simple group is factorial (see \cite[Remark 3, pp376]{Pop}); therefore, it suffices to show that $\Rep(\Pi,G)_z$ is factorial in these cases. By Proposition \ref{Prop-dim-Rep} and Fact \ref{Fact1} (2), it suffices to show that the singular locus of $\Rep(\Pi,G)_z$ has codimension at least four. Since $\Rep_z(\Pi,\tilde{G})$ is the direct product of $\Rep_{z_i}(\Pi,\tilde{G}_i)$ and the singular locus of each factor $\Rep_{z_i}(\Pi,\tilde{G}_i)$ has codimension at least four according to the proof of Corollary \ref{Cor-fac-Rep}, the singular locus of $\Rep_z(\Pi,\tilde{G})$ also has codimension at least four. Now, $\Rep(\Pi,G_1)_z$ is the quotient of $\Rep_z(\Pi,\tilde{G})$ by the free action of $Z^{2g}$, and thus its singular locus has codimension at least four. It follows from  (\ref{eq-Rep-G-G1}) that the same conclusion holds for $\Rep(\Pi,G)_z$.

We now show that in the remaining cases, $\Ch(\Pi,G)_z$ is $\mathbb{Q}$-factorial. Now, $g=2$ and for some $i$, we have $\tilde{G}_i\cong\SL_2$ and $z_i=1$. We will prove the $\mathbb{Q}$-factoriality for $\tilde{G}$, then the $\mathbb{Q}$-factoriality for $G$ follows since a finite quotient of a $\mathbb{Q}$-factorial normal variety is $\mathbb{Q}$-factorial (\cite[Theorem 3.8.1]{Ben}). The $\mathbb{Q}$-factoriality of $\Ch(\Pi,\SL_2)$ is explained in the proof of \cite[Theorem 1.3]{BS}. The direct factors with $\tilde{G}_i\ncong\SL_2$ or $z_i\neq 1$ are factorial according to the previous paragraph. It follows that $\Ch_z(\Pi,\tilde{G})$ is $\mathbb{Q}$-factorial, since a product of $\mathbb{Q}$-factorial varieties is $\mathbb{Q}$-factorial according to Boissi\`ere-Gabber-Serman \cite{BGS}.
\end{proof}

\begin{Thm}\label{Thm-terminal}
Let $G$ be a reductive group and let $z\in Z$. Then, the connected component $\Ch(\Pi,G)_z$ has terminal singularities if and only if 
\begin{itemize}
\item $g>2$, or
\item $g=2$ and for every direct factor $\tilde{G}_i$ of $\tilde{G}$ such that $\tilde{G}_i\cong\SL_2$, we have $z_i\ne 1$.
\end{itemize}
\end{Thm}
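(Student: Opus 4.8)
The plan is to deduce the statement from the reduction steps of \S\ref{subsec-main-red}, from Proposition \ref{Prop-alm-sim-sing}, and from the characterisation of terminality for symplectic singularities in Fact \ref{Fact1}(5): since by Theorem \ref{Thm-reductive-sympsing} every character variety in sight is a symplectic singularity, ``terminal'' is synonymous with ``singular locus of codimension $\geq 4$'', and the proof becomes a bookkeeping of these codimensions. I would first strip off the two outer finite quotients. The variety $\Ch(\Pi,Z_G^{\circ})\cong(Z_G^{\circ})^{2g}$ is smooth, and for symplectic singularities $X,Y$ one has $\Sing(X\times Y)=(\Sing X\times Y)\cup(X\times\Sing Y)$, hence $\codim\Sing(X\times Y)=\min(\codim\Sing X,\codim\Sing Y)$; so $\Ch(\Pi,Z_G^{\circ})\times\Ch(\Pi,G_1)_z$ and $\Ch(\Pi,G_1)_z$ have singular loci of equal codimension. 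Since $\Ch(\Pi,G)_z$ is the quotient of this product by the \emph{free} action of $K^{2g}$ (Lemma \ref{Lem-Ch-conn-comp}), and a free quotient is \'etale and so preserves the codimension of the singular locus, $\Ch(\Pi,G)_z$ is terminal if and only if $\Ch(\Pi,G_1)_z$ is. By the same product formula, the cover $\Ch_z(\Pi,\tilde G)=\prod_i\Ch_{z_i}(\Pi,\tilde G_i)$ is terminal if and only if each $\Ch_{z_i}(\Pi,\tilde G_i)$ is; by Proposition \ref{Prop-alm-sim-sing} this fails precisely when $g=2$ and some $\tilde G_i$ is of type $A_1$, i.e.\ $\tilde G_i\cong\SL_2$, with $z_i=1$, and then --- being a non-terminal symplectic singularity --- $\codim\Sing\Ch_z(\Pi,\tilde G)=2$. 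It remains to compare $\Ch(\Pi,G_1)_z=\Ch_z(\Pi,\tilde G)/Z^{2g}$ with $\Ch_z(\Pi,\tilde G)$.

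Here lies the main obstacle: $Z^{2g}$ does \emph{not} act freely on $\Ch_z(\Pi,\tilde G)$, so the quotient may gain singularities along the branch locus that the cover does not see. The key claim is that the part of the branch locus lying in the smooth strongly irreducible open set $\Ch^{\diamondsuit}_z(\Pi,\tilde G)$ has image of codimension $\geq 4$ in $\Ch(\Pi,G_1)_z$, and I would prove it via endoscopy. If a nontrivial $\gamma=(\lambda_i,\mu_i)_i\in Z^{2g}$ fixes the class of a strongly irreducible $\rho$, there is $g\in\tilde G$ with $g\rho(\sigma)g^{-1}=\chi_\gamma(\sigma)\rho(\sigma)$ on the generators $\sigma$, where $\chi_\gamma$ is the character of $\Pi$ attached to $\gamma$; raising to a power killing $\chi_\gamma$ shows $g$ has finite order modulo $\Stab_{\tilde G}\rho=Z_{\tilde G}$, hence is semisimple, and $g\notin Z_{\tilde G}$, for otherwise $\chi_\gamma$, hence $\gamma$, would be trivial. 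Because $\tilde G$ is simply connected, the preimage of $Z_{G_1}$ in $\tilde G$ equals $Z_{\tilde G}$, so the image $\bar g$ of $g$ in $G_1$ is semisimple and noncentral; the image $\bar\rho$ of $\rho$ remains irreducible (Lemma \ref{Lem-Ch-isog}) and is centralised by $\bar g$, which forces $C_{G_1}(\bar g)$ to lie in no proper Levi, i.e.\ $\bar g$ is quasi-isolated and $[\bar\rho]$ lies in an elliptic endoscopic locus $\Ch^{\heartsuit}_{(\bar g,\omega),z}(\Pi,G_1)$ with $\omega$ read off from $\bar\rho$. By Proposition \ref{Prop-EEL-4} and Corollary \ref{Cor-Rep-EEL-4} (applicable as $G_1$ is semisimple) this locus has codimension $\geq 4$, and finiteness of the quotient map transports this back to $\Ch^{\diamondsuit}_z(\Pi,\tilde G)$.

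Granting the claim, both directions follow. Off the branch locus the map $\Ch^{\diamondsuit}_z(\Pi,\tilde G)\to\Ch(\Pi,G_1)_z$ is \'etale with smooth source, so $\Sing\Ch(\Pi,G_1)_z$ is contained in the union of the images of $\Ch^{\blacklozenge}_z(\Pi,\tilde G)$, of the branch locus, and of $\Sing\Ch_z(\Pi,\tilde G)$. In the terminal range ($g>2$, or $g=2$ with no $\SL_2$-factor carrying $z_i=1$) the first has codimension $\geq 4$ by Corollary \ref{Cor-red-4} and Proposition \ref{Prop-EEL-4} (the reducible locus being empty for a twisted $\SL_2$), the second by the previous paragraph, the third by the product computation; hence $\Ch(\Pi,G_1)_z$, and so $\Ch(\Pi,G)_z$, is terminal. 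In the remaining case $\Sing\Ch_z(\Pi,\tilde G)$ has a codimension-two component sitting in the distinguished $\SL_2$-factor, and I would check that its generic point avoids the branch locus: a generic class in the reducible locus $T^{2g}/W$ of $\Ch(\Pi,\SL_2)$ is fixed by no nontrivial element of $Z(\SL_2)^{2g}$ (a nontrivially fixed class would have all coordinates of order dividing $4$), and by the dichotomy of the previous paragraph a generic strongly irreducible representation in the other factors is moved by every nontrivial component of $\gamma$. Thus the quotient map is \'etale there, $\Ch(\Pi,G_1)_z$ has codimension-two singular locus, and --- the outer free $K^{2g}$-quotient and the smooth factor $\Ch(\Pi,Z_G^{\circ})$ changing nothing --- $\Ch(\Pi,G)_z$ is not terminal. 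This reproduces the stated equivalence, the one new ingredient beyond \S\ref{subsec-SymSing} being the endoscopic identification of the branch locus.
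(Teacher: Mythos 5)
Your proposal is correct and follows essentially the same route as the paper: reduce via the free $K^{2g}$-quotient and the smooth torus factor, use Fact \ref{Fact1}(5) together with Theorem \ref{Thm-reductive-sympsing}, handle the $g=2$, $\SL_2$, $z_i=1$ case by the codimension-two singular locus from \cite{BS}, and control the extra singularities created by the $Z^{2g}$-quotient by showing they land in elliptic endoscopic loci of codimension at least four (Corollary \ref{Cor-red-4}, Proposition \ref{Prop-EEL-4}, and the emptiness of $\Ch^{\blacklozenge}_{-1}(\Pi,\SL_2)$). Your write-up merely supplies more detail than the paper at two points it states without elaboration --- the identification of the $Z^{2g}$-branch locus inside $\Ch^{\diamondsuit}$ with endoscopic strata, and the check that the generic point of the codimension-two singular locus avoids the branch locus so that non-terminality descends to the quotient.
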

\begin{proof}
By Fact \ref{Fact1} (5) and Theorem \ref{Thm-reductive-sympsing}, it suffices to determine when the singular locus has codimension larger than four. If $g=2$ and $z_i=1$ for some $\tilde{G}_i\cong\SL_2$, then the singular locus of $\Ch_z(\Pi,\tilde{G})$ has codimension two by \cite[Proposition 1.4]{BS}. Then the singular loci of $\Ch(\Pi,G_1)_z$ and $\Ch(\Pi,G)$ also have codimension two. Otherwise, the codimension of $\Ch^{\blacklozenge}_z(\Pi,\tilde{G})$ is at least four  by Corollary \ref{Cor-red-4}, Proposition \ref{Prop-EEL-4} and the fact that $\Ch_{-1}(\Pi,\SL_2)=\Ch_{-1}^{\diamondsuit}(\Pi,\SL_2)$, while $\Ch^{\diamondsuit}_z(\Pi,\tilde{G})$ is smooth. The extra singular points of $\Ch(\Pi,G_1)_z$ caused by taking the $Z^{2g}$ quotient are contained in $\Ch^{\blacklozenge}(\Pi,G_1)_z\cap\Ch^{\heartsuit}(\Pi,G_1)_z$, and form a subset of codimension at least four by Proposition \ref{Prop-EEL-4}. It follows that the singular locus of $\Ch(\Pi,G_1)_z$ has codimension at least four. Then, the same is true for $G$. 
\end{proof}

\begin{Thm}\label{Thm-g>1-res}
Let $G$ be a reductive group and $Y$ a connected component of $\Ch(\Pi,G)$. Then, $Y$ admits a symplectic resolution if and only if the derived subgroup of $G$ is a direct product of copies of $\SL_2$, $g=2$, and $Y$ is the only connected component.
\end{Thm}
\begin{proof}
The existence of symplectic resolutions under the conditions in the theorem is the content of \cite[Theorem 1.10]{BS}. We only need to show that in all other cases $Y$ does not admit any symplectic resolution. 

If $g>2$, then $Y$ is $\mathbb{Q}$-factorial and has terminal and symplectic singularities by Theorem \ref{Thm-reductive-sympsing}, Theorem \ref{Thm-Q-fac} and Theorem \ref{Thm-terminal}; therefore, it does not admit any symplectic resolution. It remains to consider $g=2$. As above, $G_1$ denotes the derived subgroup of $G$ and $\tilde{G}$ denotes the simply connected cover of $G_1$. Now, $Y=\Ch(\Pi,G)_z$ for some $z=(z_i)_i\in Z$.  Suppose that for every direct factor $\tilde{G}_i$ of $\tilde{G}$ such that $\tilde{G}_i\cong\SL_2$, we have $z_i\ne 1$. Then, $Y$ is again $\mathbb{Q}$-factorial and has terminal singularities, so that it does not admit any symplectic resolution. Suppose that both of the following sets are nonempty:
\begingroup
\allowdisplaybreaks
\begin{align*}
&I_1:=\{i\mid \tilde{G}_i\cong\SL_2\text{ and }z_i=1\}\\
&I_2:=\{i\mid \text{ either }\tilde{G}_i\ncong\SL_2\text{ or } z_i\ne 1\}.
\end{align*}
\endgroup
Now, the singular locus of $Y$ has codimension two because of the factor $\Ch(\Pi,\SL_2)$, and thus $Y$ does not have terminal singularities. In order to show that $Y$ does not admit any symplectic resolution, we will construct a $\mathbb{Q}$-factorial terminalisation $\tilde{Y}\rightarrow Y$ (i.e., a crepant birational projective morphism where $\tilde{Y}$ is $\mathbb{Q}$-factorial and has terminal singularities) where $\tilde{Y}$ is singular. Then, the existence of a symplectic resolution of $Y$ contradicts the following result of Namikawa \cite[Corollary 31]{Nam3}: for an affine symplectic singularity $Y$ and two $\mathbb{Q}$-factorial terminalisations $f_1:\tilde{Y}_1\rightarrow Y$ and $f_2:\tilde{Y}_2\rightarrow Y$, if $\tilde{Y}_1$ is smooth, then so is $\tilde{Y}_2$.

To construct $\mathbb{Q}$-factorial terminalisations, we use the argument of \cite[\S 3]{BS}. Write $\tilde{G}\cong \prod_i\tilde{G}_i$ where each $\tilde{G}_i$ is simply connected almost simple. If $\tilde{G}_i\cong\SL_2$ and $z_i=1$, then we denote by $\tilde{\Ch}(\Pi,\tilde{G}_i)$ the symplectic resolution as in \cite[Theorem 1.9]{BS}. Write 
$$
\tilde{Y}''=\prod_{i\in I_2}\Ch_{z_i}(\Pi,\tilde{G}_i)\times\prod_{i\in I_1}\tilde{\Ch}(\Pi,\tilde{G}_i).
$$
The natural map $f'':\tilde{Y}''\rightarrow Y''=\Ch_z(\Pi,\tilde{G})$ is a $\mathbb{Q}$-factorial terminalisation. Let $Z_{\tilde{G}}$ be the centre of $\tilde{G}$ so that $Z$ is a subgroup of $Z_{\tilde{G}}$. The group $Z_{\tilde{G}}^{2g}=Z_{\tilde{G}}^4$ naturally acts on $\Ch_{z}(\Pi,\tilde{G})$, and this action uniquely lifts to $\tilde{Y}''$ according to \cite[\S 3.6]{BS} so that $f''$ is equivariant. This applies in particular to the subgroup $Z^4$. Since $\Ch(\Pi,G_1)_z\cong\Ch_z(\Pi,\tilde{G})/ Z^4$, we would like to show that $\tilde{Y}':=\tilde{Y}''/Z^4$ is a $\mathbb{Q}$-factorial terminalisation of the connected component $Y':=\Ch(\Pi,G_1)_z$. The variety $\tilde{Y}'$ is $\mathbb{Q}$-factorial as a finite quotient of a $\mathbb{Q}$-factorial variety. The singular locus of $\tilde{Y}'$ is contained in the union of the quotient of the singular locus of $\tilde{Y}''$ and the fixed point loci of elements of $Z^4$. The codimension of the singular locus of $\tilde{Y}''$ is at least four, while the codimension of the fixed point locus of an element of $Z^4$ is larger than the codimensions of the fixed point loci in a direct factor $\Ch_{z_i}(\Pi,\tilde{G}_i)$ or $\tilde{\Ch}(\Pi,\tilde{G}_i)$, which are at least four. Therefore, $\tilde{Y}'$ has terminal singularities. Since $\tilde{Y}'$ and $Y'$ have trivial canonical bundle, the morphism $\tilde{Y}'\rightarrow Y'$ is crepant and thus a $\mathbb{Q}$-factorial terminalisation. 

To complete the proof, we need to show that $\tilde{Y}'$ is singular. The twisted character variety $\Ch_{z_i}(\Pi,\SL_{n_i})$ is smooth precisely when $z_i$ is a \textit{generic conjugacy class}, referred to as \textit{the coprime case} in the literature. If $\tilde{G}_i$ is not of type A, then there is no generic central conjugacy class and $\Ch_{z_i}(\Pi,\tilde{G}_i)$ is singular for any central $z_i$. We see that $\tilde{Y}''$ is singular unless $\tilde{G}$ has only type A components and $z_i$ is a generic conjugacy class of $\tilde{G}_i$ whenever $\tilde{G}_i\ncong\SL_2$. Now, suppose that $\tilde{Y}''$ is nonsingular, and write $\tilde{G}_i=\SL_{n_i}$ for some $n_i$. Proposition \ref{Prop-App-A5} implies that $(z_i,z_i,z_i,z_i)$ fixes some point $\rho_i\in\Ch_{z_i}(\Pi,\SL_{n_i})$ for every $i$ such that $z_i$ is generic. Define $\rho=(\rho_i)_i$ where $\rho_i$ is as above if $i\in I_2$ and $\rho_i$ is arbitrary if $i\in I_1$. Then, $\Stab_{Z^4}(\rho)$ contains $(z,z,z,z)$. Therefore, $\tilde{Y}'$ is always singular.
\end{proof}

\section{The case $g=1$}\label{sec-g=1}

\subsection{Results of Borel-Friedman-Morgan}\label{subsec-BFM}\hfill

We will assume $g=1$ throughout this section. In \cite{BS}, the study of the singularities of $\Ch(\Pi,\GL_n)$ relies on the isomorphism (see e.g. \cite{GG})
$$
\Ch(\Pi,\GL_n)\cong(T\times T)/\mathfrak{S}_n.
$$
Its generalisation to arbitrary reductive groups was a recent hard theorem of Li-Nadler-Yun \cite{LNY}, which we recall below.

Let $G$ be an almost simple group and $\tilde{G}\rightarrow G$ its simply connected cover, with kernel $Z$ contained in $Z_{\tilde{G}}$. Note that $\pi_1(G)\cong Z$ so that the connected components of $\Ch(\Pi,G)$ are labelled by $Z$. We have a disjoint union of connected components
$$
\Rep(\Pi,G)=\{(g,h)\in G^2\mid gh=hg\}=\bigsqcup_{z\in Z}\Rep(\Pi,G)_z.
$$
Beware that $\Rep(\Pi,G)$ may not be a reduced scheme. The connected component $\Rep(\Pi,G)_z$ is the quotient of $\Rep_z(\Pi,\tilde{G})$ by $Z^2$. Let $(\tilde{x},\tilde{y})\in\Rep_z(\Pi,\tilde{G})$ be a pair of semi-simple elements, and denote its image in $\Rep(\Pi,G)$ by $(x,y)$. Then, the common centraliser $C_{\tilde{G}}(\tilde{x},\tilde{y})=\{g\in \tilde{G}\mid \tilde{x}g=g\tilde{x},\tilde{y}g=g\tilde{y}\}$ is a linearly reductive subgroup of $\tilde{G}$, and similarly for $C_G(x,y)$. We have $C_{\tilde{G}}(\tilde{x},\tilde{y})^{\circ}/Z=C_G(x,y)^{\circ}$ (see e.g. \cite[Equation (2.2)]{Bon}). Let $\tilde{S}_z\subset C_{\tilde{G}}(\tilde{x},\tilde{y})$ be a maximal torus, and thus $S_z=\tilde{S}_z/Z$ is a maximal torus of $C_G(x,y)$. By \cite[Proposition 4.2.1]{BFM}, the $\tilde{G}$-conjugacy class of $\tilde{S}_z$ only depends on $z$ and is independent of the choice of $(\tilde{x},\tilde{y})$. Define $\tilde{L}_z:=C_{\tilde{G}}(\tilde{S}_z)$, $L_z:=C_G(S_z)$, $\tilde{T}_z:=\tilde{S}_z/\tilde{S}_z\cap[\tilde{L}_z,\tilde{L}_z]$ and $T_z:=S_z/S_z\cap[L_z,L_z]$. Write $W_z:=N_{\tilde{G}}(\tilde{L}_z)/\tilde{L}_z=N_G(L_z)/L_z$. It acts on $\tilde{S}_z$, $S_z$, $\tilde{T}_z$ and $T_z$. What we need from \cite{LNY} is the following.
\begin{Thm}(\cite[Theorem 6.3.1]{LNY})
There is an isomorphism of schemes
$$
\Ch(\Pi,G)_z\cong(T_z\times T_z)/W_z,
$$
with $W_z$ acting diagonally. In particular, the scheme $\Ch(\Pi,G)$ is reduced.
\end{Thm}

\begin{Prop}
If $T$ is the maximal torus of a reductive group and $W$ is the Weyl group defined by $T$, then $(T\times T)/W$ is $\mathbb{Q}$-factorial and has symplectic singularities.
\end{Prop}
\begin{proof}
Finite quotients of smooth varieties are $\mathbb{Q}$-factorial. The variety $T\times T$ has a standard symplectic structure: the tangent space $\mathbb{T}_aT\oplus\mathbb{T}_bT$ at $(a,b)$ is identified with $\mathfrak{t}\oplus\mathfrak{t}=\mathbb{T}_1T\oplus\mathbb{T}_1T$ via the translation by $(a,b)$, and the inner product on $\mathfrak{t}$ defines a symplectic form on $\mathfrak{t}\oplus\mathfrak{t}$, which is transported to every tangent space $\mathbb{T}_aT\oplus\mathbb{T}_bT$. It suffices to show that the action of $W$ preserves this symplectic form, but this follows from the commutativity of the following diagram
$$
\begin{tikzcd}[row sep=2.5em, column sep=2em]
\mathbb{T}_aT\oplus\mathbb{T}_bT \arrow[r, "w"] \arrow[d, swap, "{\cdot(a^{-1},b^{-1})}"] & \mathbb{T}_{w(a)}T\oplus\mathbb{T}_{w(b)}T \arrow[d, "{\cdot(w(a)^{-1},w(b)^{-1})}"]\\
\mathbb{T}_1T\oplus\mathbb{T}_1T \arrow[r, "w"'] & \mathbb{T}_1T\oplus\mathbb{T}_1T,
\end{tikzcd}
$$
for any $w\in W$ and $(a,b)\in T\times T$ and the fact that the bottom arrow preserves the symplectic form.
\end{proof}

It is a subtle question as to how $W_z$ acts on $T_z$. As can be seen from the results of Bellamy-Schedler, the answer to this question will be crucial for the existence of symplectic resolutions. Here we need some results from Borel-Friedman-Morgan concerning the construction of $W_z$. The first step is to give a Lie theoretic description of $\tilde{S}_z$. 

For any torus $S$, we will denote by $X(S):=\Hom(S,\mathbb{G}_m)$ the character lattice of $S$ and by $Y(S):=\Hom(\mathbb{G}_m,S)$ the cocharacter lattice of $S$. Now, we fix a maximal torus $\tilde{T}\subset \tilde{G}$ and denote by $\Phi\subset X(\tilde{T})$ the subset of roots of $\tilde{G}$ and by $\Phi^{\vee}\subset Y(\tilde{T})$ the subset of coroots of $\tilde{G}$. Choose $\Delta\subset\Phi$ a basis of simple roots as well as the corresponding simple coroots $\Delta^{\vee}$. We denote by $\beta$ the highest root of $\tilde{G}$ and write $\tilde{\Delta}=\Delta\sqcup\{-\beta\}$. We have 
$$
\beta=\sum_{\alpha\in\Delta}n_{\alpha}\alpha
$$ 
for some positive integers $n_{\alpha}$. Put $n_{-\beta}=1$. Recall that there are natural bijections
\begin{equation}\label{eq-two-bijections}
Z_{\tilde{G}}\cong\{\alpha\in\tilde{\Delta}\mid n_{\alpha}=1\}\cong \Aut_W(\tilde{\Delta}):=\{w\in W\mid w(\tilde{\Delta})=\tilde{\Delta}\}
\end{equation}
whose composition is an isomorphism of groups. The first bijection is defined as follows. Let  $Q^{\vee}$ denote the coroot lattice (i.e., the lattice spanned by $\Phi^{\vee}$) and let $P^{\vee}$ denote the coweight lattice, which is the lattice dual to the root lattice $Q$ spanned by $\Phi$. We will denote by $\varpi_{\alpha}^{\vee}\in P^{\vee}$ the coweight dual to $\alpha\in\Delta$, and set $\varpi_{-\beta}^{\vee}=0$ by convention. It is well-known that there is an isomorphism $Z_{\tilde{G}}\cong P^{\vee}/Q^{\vee}$. This isomorphism depends on the choice of an injective homomorphism of groups $\iota:\mathbb{Q}/\mathbb{Z}\hookrightarrow\mathbb{C}^{\ast}$, which should be thought of as the exponential map. Let $\tilde{\iota}$ be the composition of $\iota$ with the quotient map $\mathbb{Q}\rightarrow\mathbb{Q}/\mathbb{Z}$. We define the natural map $\tilde{\iota}_{\tilde{T}}:\mathbb{Q}\otimes_{\mathbb{Z}}Y(\tilde{T})\rightarrow\tilde{T}(\mathbb{C})$ sending $r\otimes \lambda$ to $\lambda(\tilde{\iota}(r))$. Since $\tilde{G}$ is simply connected, we have $Y(\tilde{T})=Q^{\vee}$, and thus the restriction of $\tilde{\iota}_{\tilde{T}}$ to $Q^{\vee}$ has trivial image; $\tilde{\iota}_{\tilde{T}}$ factors through $(\mathbb{Q}\otimes_{\mathbb{Z}}Y(\tilde{T}))/Q^{\vee}$. It can be shown that the restriction of $\tilde{\iota}_{\tilde{T}}$ to $P^{\vee}/Q^{\vee}$ induces an isomorphism onto $Z_{\tilde{G}}\subset\tilde{T}$. Under this bijection, for any $z\in Z_{\tilde{G}}$, there is a unique $\alpha\in\tilde{\Delta}$ with $n_{\alpha}=1$ such that $\tilde{\iota}_{\tilde{T}}(\varpi_{\alpha}^{\vee})=z$, hence the first bijection. The second bijection is defined as follows. Let $\alpha\in\tilde{\Delta}$ be such that $n_{\alpha}=1$. Denote by $\Phi_{\alpha}\subset\Phi$ the root subsystem with basis $\Delta\setminus\{\alpha\}$ (so that in particular $\Phi_{-\beta}=\Phi$), and denote by $W_{\alpha}$ the Weyl group of $\Phi_{\alpha}$. The set of positive roots in $\Phi_{\alpha}$ with respect to $\Delta$ is denoted by $\Phi_{\alpha}^+$. There is a unique element $w_{\alpha}\in W_{\alpha}$ satisfying $w_{\alpha}(\Phi_{\alpha}^+)=-\Phi_{\alpha}^+$. The second bijection sends $\alpha$ to $w_{\alpha}w_{-\beta}$. We will write $w_z=w_{\alpha}w_{-\beta}$ if $z$ corresponds to $\alpha$ under the first bijection.
\begin{Prop}\label{}
Let $\tilde{T}^{w_z}$ be the subgroup consisting of the fixed points of $w_z$. Then, the torus $(\tilde{T}^{w_z})^{\circ}$ is conjugate to $\tilde{S}_z$.
\end{Prop}
\begin{proof}
This follows from \cite[Proposition 3.5.4 and Proposition 4.2.1]{BFM}.
\end{proof}
\begin{Rem}
We will take $\tilde{S}_z=(\tilde{T}^{w_z})^{\circ}$ in what follows.
\end{Rem}

In \cite{BFM}, the group $W_z$ is identified with the Weyl group of some root system in $\mathbb{Q}\otimes_{\mathbb{Z}}X(\tilde{S}_z)$. We will analyse the action of $W_z$ on $\tilde{T}_z$ by comparing the (co)root lattice and the (co)character lattice. To this end, we recall some results of Digne-Michel. Let $\sigma$ be an automorphism of $\tilde{T}$ of finite order $n$. Define a linear map
\begingroup
\allowdisplaybreaks
\begin{align*}
\pi:\mathbb{Q}\otimes_{\mathbb{Z}}X(\tilde{T})&\longrightarrow \mathbb{Q}\otimes_{\mathbb{Z}}X(\tilde{T})\\
x&\longmapsto\frac{1}{n}\sum_{i=1}^n\sigma^i(x).
\end{align*}
\endgroup
The same formula defines a map $\mathbb{Q}\otimes_{\mathbb{Z}}Y(\tilde{T})\rightarrow \mathbb{Q}\otimes_{\mathbb{Z}}Y(\tilde{T})$ which we also denote by $\pi$. We denote by $X(\tilde{T})^{\sigma}$ and $Y(\tilde{T})^{\sigma}$ the sublattices consisting of the fixed points of $\sigma$. We will also write $X(\tilde{T})_{\sigma}:=\pi(X(\tilde{T}))$ and $Y(\tilde{T})_{\sigma}:=\pi(Y(\tilde{T}))$. The image of the homomorphism $\tilde{T}\rightarrow\tilde{T}$ sending $t$ to $t\sigma(t)^{-1}$ is a subtorus, which will be denoted by $[\tilde{T},\sigma]$. The following proposition collects a couple of results in \cite[\S 1]{DM18}.
\begin{Prop}\label{Prop-DM18-XY}
With the notation above, the following assertions hold:
\begin{itemize}
\item[(i)] $Y((\tilde{T}^{\sigma})^{\circ})=Y(\tilde{T})^{\sigma}$.
\item[(ii)] $X(\tilde{T})_{\sigma}\cong X((\tilde{T}^{\sigma})^{\circ})$, which is induced by the restriction of characters of $\tilde{T}$ to $(\tilde{T}^{\sigma})^{\circ}$.
\item[(iii)] $Y(\tilde{T})_{\sigma}\cong Y(\tilde{T}/[\tilde{T},\sigma])$, which is induced by the quotient map $\tilde{T}\rightarrow \tilde{T}/[\tilde{T},\sigma]$.
\item[(iv)] $X(\tilde{T}/[\tilde{T},\sigma])\cong X(\tilde{T})^{\sigma}$, which is induced by $\tilde{T}\rightarrow \tilde{T}/[\tilde{T},\sigma]$.
\end{itemize}
\end{Prop}

This proposition will be applied to the situation where $\sigma=w_z$, and $\pi$ will implicitly depend on $z$.
\begin{Lem}\label{Lem-T1=Tw}
Let $\tilde{T}_1$ be the maximal torus of $[\tilde{L}_z,\tilde{L}_z]$ contained in $\tilde{T}$. Then, we have $\tilde{T}_1=[\tilde{T},w_z]$.
\end{Lem}
Recall that $\tilde{L}_z:=C_{\tilde{G}}(\tilde{S}_z)$ and $\tilde{S}_z=(\tilde{T}^{w_z})^{\circ}$.
\begin{proof}
Let $Q_z$ (resp. $Q_z^{\vee}$) be the root lattice (resp. coroot lattice) of $\tilde{L}_z$ with respect to $\tilde{T}$. It is well-known that $\tilde{T}_1$ is the subtorus generated by the images of $\alpha^{\vee}$ with $\alpha^{\vee}\in Q_z^{\vee}$. We will show that $\alpha^{\vee}:\mathbb{G}_m\rightarrow\tilde{T}$ factors through $[\tilde{T},w_z]$ for $\alpha^{\vee}\in Q_z^{\vee}$. Consider the exact sequence
$$
1\longrightarrow[\tilde{T},w_z]\longrightarrow\tilde{T}\longrightarrow\tilde{T}/[\tilde{T},w_z]\longrightarrow 1.
$$
We see that 
$$
Y([\tilde{T},w_z])=\{\lambda\in Y(\tilde{T})\mid \langle\lambda,\chi\rangle=0\text{ for every $\chi\in X(\tilde{T}/[\tilde{T},w_z])$}\},
$$
where $\langle-,-\rangle$ is the duality between $Y(\tilde{T})$ and $X(\tilde{T})$. In view of Proposition \ref{Prop-DM18-XY} (iv), it suffices to show that $\langle\alpha^{\vee},\chi\rangle=0$ for every $\alpha^{\vee}\in Q_z^{\vee}$ and every $\chi\in X(\tilde{T})^{w_z}$. Since $\mathbb{Q}\otimes X(\tilde{T})^{w_z}=\pi(\mathbb{Q}\otimes X(\tilde{T}))$, it is equivalent to showing that $\langle\alpha^{\vee},\pi(X(\tilde{T}))\rangle=0$ for every $\alpha^{\vee}\in Q_z^{\vee}$. Now, elements of $\pi(X(\tilde{T}))$ are of the form $\frac{1}{n}\sum_i\chi^{w_z^i}$, where $n$ is the order of $w_z$. Since the pairing $\langle-,-\rangle$ is invariant under the action of the Weyl group, we have $\langle\alpha^{\vee},\pi(\chi)\rangle=\langle\pi(\alpha^{\vee}),\chi\rangle$. However, $\sum_i(\alpha^{\vee})^{w_z^i}$ is invariant under the action of $w_z$. It follows that $\sum_i(\alpha^{\vee})^{w_z^i}\in Y((\tilde{T}^{w_z})^{\circ})$. But we also have $\sum_i(\alpha^{\vee})^{w_z^i}\in Y(\tilde{T}_1)$. Since $(\tilde{T}^{w_z})^{\circ}\cap \tilde{T}_1$ is finite, we have $\sum_i(\alpha^{\vee})^{w_z^i}=0$. We have therefore shown that $\alpha^{\vee}\in Y([\tilde{T},w_z])$ for every $Q_z^{\vee}$ and thus $\tilde{T}_1\subset[\tilde{T},w_z]$. Finally, note that both $\tilde{T}_1$ and $[\tilde{T},w_z]$ have dimension equal to $\dim\tilde{T}-\dim(\tilde{T}^{w_z})$. We conclude that $\tilde{T}_1=[\tilde{T},w_z]$.
\end{proof}
\begin{Cor}
We have $Y(\tilde{T}_z)=Y(\tilde{T})_{w_z}$ and $X(\tilde{T}_z)=X(\tilde{T})^{w_z}$.
\end{Cor}
\begin{proof}
This is a combination of Lemma \ref{Lem-T1=Tw} and Proposition \ref{Prop-DM18-XY}. Note that there is a natural isomorphism $\tilde{T}_z=\tilde{S}_z/\tilde{S}_z\cap[\tilde{L}_z,\tilde{L}_z]\cong \tilde{T}/\tilde{T}_1$.
\end{proof}

The last piece of information that we need from \cite{BFM} is a realisation of $W_z$ as the Weyl group of certain root system in $X(\tilde{T}_z)$. Define
$$
\Phi^{proj}(z)^{\vee}=\{\pi(\alpha^{\vee})\mid\alpha\in\Phi\}\setminus\{0\}.
$$
Let $\alpha\in\Phi$ be such that $\pi(\alpha^{\vee})\ne0$. Denote by $m_{\alpha}$ the size of the $w_z$-orbit of $\alpha$. If the roots in the $w_z$-orbit of $\alpha$ are mutually orthogonal, then we put $e(\alpha)=1$; otherwise, we put $e(\alpha)=2$. In the latter case, the $w_z$-orbit of $\alpha$ is a disjoint union of mutually orthogonal subsets, each being the basis of a root system of type $A_2$. According to \cite[Corollary 6.2.3]{BFM}, these are the only two possibilities. The set of roots dual to $\Phi^{proj}(z)^{\vee}$ is defined by 
$$
\Phi^{prod}(z)=\{e(\alpha)m_{\alpha}\alpha|_{\mathbb{Q}\otimes Y(\tilde{T})^{w_z}}\mid\alpha\in\Phi\}\setminus\{0\}.
$$ 
\begin{Prop}\label{Prop-BFM6.2.6}(\cite[Proposition 6.2.6 and Proposition 6.3.4]{BFM}) 
The set $\Phi^{prod}(z)\subset\mathbb{Q}\otimes X(\tilde{T})^{w_z}$ is a possibly nonreduced root system with coroot lattice $\pi(Q^{\vee})$, and $W_z$ is isomorphic to the Weyl group of $\Phi^{prod}(z)$.
\end{Prop}

The action of $W_z$ on $\tilde{T}_z$ will be analysed by comparing $\pi(Q^{\vee})$ with $\pi(Y(\tilde{T}))=Y(\tilde{T})_{w_z}=Y(\tilde{T}_z)$.

\subsection{Identity components}\label{subsec-iden-comp}\hfill

Now, we return to the study of character varieties. The first step is to determine whether the identity connected component $\Ch(\Pi,G)_1$ admits a symplectic resolution when $G$ is an almost simple group. We begin by recalling the following results of Bellamy-Schedler (see \cite[Theorem 1.10]{BS}).
\begin{Prop}\label{Prop-BS-Thm1.10}
Let $G$ be an almost simple group of type A and let $g=1$. Then, the identity component $\Ch(\Pi,G)_1$ admits a symplectic resolution if and only if $G\cong\SL_n$ or $G\cong\PGL_2$.
\end{Prop}
The symplectic resolutions in these cases are constructed as follows. For $\tilde{G}=\GL_n$, the character variety $\Ch(\Pi,\tilde{G})$ is identified with $(\tilde{T}\times \tilde{T})/W$ where $\tilde{T}\cong(\mathbb{C}^{\ast})^n$ is the maximal torus consisting of diagonal matrices and $W\cong\mathfrak{S}_n$ acts by simultaneously permuting the components. Since $(\tilde{T}\times \tilde{T})/W$ is the same as the symmetric product $(\mathbb{C}^{\ast}\times\mathbb{C}^{\ast})^{[n]}=(\mathbb{C}^{\ast}\times\mathbb{C}^{\ast})^n/\mathfrak{S}_n$, the Hilbert-Chow morphism
$$
\Hilb^n(\mathbb{C}^{\ast}\times\mathbb{C}^{\ast})\longrightarrow(\mathbb{C}^{\ast}\times\mathbb{C}^{\ast})^{[n]}
$$ 
gives a symplectic resolution. Let $G=\SL_n$. The character variety $\Ch(\Pi,G)$ is a closed subvariety of $\Ch(\Pi,\tilde{G})$ and is identified with $(T\times T)/W$, where $T\subset\tilde{T}$ consists of matrices with trivial determinants. The Hilbert-Chow morphism restricts to a symplectic resolution of $\Ch(\Pi,G)$.
\begin{Rem}\label{Rem-PGL2}
As was shown by \cite[Proposition 2.9]{BS}, the identity component of $\Ch(\Pi,\PGL_2)$ also admits a symplectic resolution. However, $\Ch(\Pi,\PGL_2)_1$ is in fact isomorphic to $\Ch(\Pi,\SL_2)$. To see this, we identify them with $(T\times T)/W$ and $(\bar{T}\times\bar{T})/W$ respectively, where $T$ is a maximal torus of $\SL_2$ and $\bar{T}=T/Z_{\SL_2}$. There are isomorphisms $T\cong\mathbb{C}^{\ast}$ and $\bar{T}\cong\mathbb{C}^{\ast}$ such that both actions of $W=\mathfrak{S}_2$ are the inversion; therefore, the resulting quotients are isomorphic. If $T$ is a maximal torus of $\SL_n$ with $n>2$, then it is not possible to choose isomorphisms $T\cong(\mathbb{C}^{\ast})^{n-1}\cong\bar{T}$ that are compatible with the actions of $W$ on $T$ and on $\bar{T}$; indeed, the fixed points of $W$ in $T$ are precisely the central elements of $\SL_n$, while only $\bar{1}\in\bar{T}\subset\PGL_n$ is fixed by $W$. The group $\PGL_2$ is special in that $W$ fixes both $\bar{1}$ and the element represented by the diagonal matrix $(1,-1)$.
\end{Rem}

\begin{Prop}\label{Prop-DEFG-g=1}
Let $G$ be an almost simple group of type $D_{\ell}$ with $\ell\ge4$ or one of the exceptional types, and let $g=1$. Then, the identity component $\Ch(\Pi,G)_1$ does not admit any symplectic resolution.
\end{Prop}
\begin{proof}
Under the identification $\Ch(\Pi,G)_1\cong(T\times T)/W$, the formal neighbourhood of $(1,1)$ is isomorphic to the formal neighbourhood of $(0,0)$ in the quotient $(\mathfrak{t}\oplus\mathfrak{t})/W$, where $\mathfrak{t}=\Lie T$. If follows from \cite{Bel} that it does not admit any symplectic resolution if $W$ is of type $D_{\ell}$ or of exceptional types.
\end{proof}

\begin{Prop}\label{Prop-Spin7}
Let $G=\Spin_7$ and $g=1$. Then, there exists no symplectic resolution of $\Ch(\Pi,G)$.
\end{Prop}
\begin{proof}
We have $\Ch(\Pi,G)\cong(T\times T)/W$. We will find $(t_1,t_2)\in T\times T$ whose stabiliser is an $\mathfrak{S}_2$ that acts as a scalar $(-1)$ in every direction of the tangent space. The formal neighbourhood of its image in the quotient by $W$ is $\mathbb{Q}$-factorial and has terminal singularities, thus admitting no symplectic resolutions.

Let us define $t_1$. The affine Dynkin diagram of type $B_3$ is the following
$$
\begin{tikzcd}[row sep=0em, column sep=0.2em]
\alpha_1 & \circled{1} \arrow[drr, swap, shorten=-2.5mm, dash, yshift=-0.1mm] &&&&\\
 & && \circled{2} \arrow[rr, swap, shorten=-2mm, equal, yshift=-0.1mm, ">" marking] && \circled{2}\\
\alpha_0 & \circled{1} \arrow[urr, swap, shorten=-2.5mm, dash, yshift=-0.1mm] && \alpha_2 && \alpha_{3}
\end{tikzcd},
$$
where $\alpha_0=-\beta$. Let $\varpi_2^{\vee}$ be the fundamental coroot dual to $\alpha_2$. Put $t_1:=\tilde{\iota}_T(\frac{1}{2}\varpi_2^{\vee})$. Then, the centraliser $C_G(t_1)$ is a connected reductive group since $G$ is simply connected. According to \cite[Lemma 3.2 (b) and Proposition 3.14 (a)]{Bon}, the root system of $C_G(t_1)$ with respect to $T$ has $\{\alpha_1,\alpha_3,\alpha_0\}$ as a basis. We will show that $C_G(t_1)$ is not simply connected. Let $\{\epsilon_1,\epsilon_2,\epsilon_3\}$ be a standard basis of the root lattice of $G$ so that $\alpha_1=\epsilon_1-\epsilon_2$, $\alpha_2=\epsilon_2-\epsilon_3$, $\alpha_3=\epsilon_3$ and $\alpha_0=-\epsilon_1-\epsilon_2$. The cocharacter lattice of $T$ coincides with the coroot lattice of $G$ since $G$ is simply connected; therefore, $Y(T)$ is generated by the simple coroots:
$$
\Delta^{\vee}=\{\alpha_1^{\vee},\alpha_2^{\vee},\alpha_3^{\vee}\}=\{\epsilon^{\ast}_1-\epsilon^{\ast}_2,\epsilon_2^{\ast}-\epsilon^{\ast}_3,2\epsilon^{\ast}_3\},
$$
where $\{\epsilon_1^{\ast},\epsilon_2^{\ast},\epsilon_3^{\ast}\}$ is the dual basis of $\{\epsilon_1,\epsilon_2,\epsilon_3\}$. The coroots dual to $\{\alpha_1,\alpha_3,\alpha_0\}$ is 
$$
\{\alpha_1^{\vee},\alpha_3^{\vee},\alpha_0^{\vee}\}=\{\epsilon_1^{\ast}-\epsilon_2^{\ast},2\epsilon_3^{\ast},-\epsilon_1^{\ast}-\epsilon_2^{\ast}\}.
$$
However,
\begin{equation}\label{eq-Prop-Spin7}
\alpha_2^{\vee}=-\frac{1}{2}(\alpha_1^{\vee}+\alpha_3^{\vee}+\alpha_0^{\vee}).
\end{equation}
It follows that $\alpha_2^{\vee}$ does not lie in the coroot lattice $Q^{\vee}(t_1)$ of $C_G(t_1)$, and that $C_G(t_1)$ is not simply connected. Moreover, the transition matrix between $\Delta^{\vee}$ and the simple coroots of $C_G(t_1)$ shows that $Y(T)/Q^{\vee}(t_1)\cong\mathbb{Z}/2$.

Write $G_1:=C_G(t_1)$ and $W(t_1):=N_{G_1}(T)/T$. By \cite[Proposition 1.3]{Bon}, we have $W(t_1)=\Stab_W(t_1)$ and $W(t_1)\cong\mathfrak{S}_2\times\mathfrak{S}_2\times\mathfrak{S}_2$. We will find $t_2\in T$ such that $\Stab_{W(t_1)}(t_2)\cong\mathfrak{S}_2$ which is diagonally embedded in $W(t_1)$. We first fix some notations following \cite{Bon}. For $i\in\{1,3,0\}$, we write $\tilde{\Delta}_i=\{\alpha_i,-\alpha_i\}$, $W_i=\langle s_{\alpha_i}\rangle$ (i.e. the group generated by the reflection $s_{\alpha_i}$ associated to $\alpha_i$) and 
$$
\mathscr{A}:=\Aut_{W_1}(\tilde{\Delta}_1)\times\Aut_{W_3}(\tilde{\Delta}_2)\times\Aut_{W_0}(\tilde{\Delta}_0)=W_1\times W_3\times W_0\cong W(t_1).
$$
The second bijection in (\ref{eq-two-bijections}) reads $\tilde{\Delta}_i\cong W_i$ for each $i$. We may thus write 
$$
W_i=\{w_{\alpha_i}=s_{\alpha_i},w_{-\alpha_i}=1\}.
$$
The isomorphism (\ref{eq-two-bijections}) applied to each $\tilde{\Delta}_i$ gives an isomorphism $P^{\vee}(t_1)/Q^{\vee}(t_1)\cong\mathscr{A}$, where $P^{\vee}(t_1)$ is the coweight lattice of $C_G(t_1)$. Denote by $\mathscr{A}_1$ the image of $Y(T)/Q^{\vee}(t_1)$ in $\mathscr{A}$. Now, equation (\ref{eq-Prop-Spin7}) shows that 
$$
\mathscr{A}_1=\{1,(s_{\alpha_1},s_{\alpha_3},s_{\alpha_0})\}\subset W(t_1),
$$ 
since $\frac{1}{2}\alpha_i^{\vee}$ is equal to the coweight $\varpi_i^{\vee}$ which is dual to $\alpha_i$ (beware that the duality between $\varpi_i^{\vee}$ and $\alpha_i$ is considered in $\Delta_i$ and not in $\Delta$). We need to find $t_2\in T$ with $\Stab_{W(t_1)}(t_2)=\mathscr{A}_1$.

We will define $t_2$ as a particular quasi-isolated element of $G_1$. We recall some ingredients from \cite[\S 4]{Bon}. Let $\mathscr{Q}(G_1)$ denote the set of subsets $\Omega$ of $\tilde{\Delta}_1\sqcup\tilde{\Delta}_3\sqcup\tilde{\Delta}_0$ satisfying:
\begin{itemize}
\item For every $i\in\{1,3,0\}$, we have $\Omega\cap\tilde{\Delta}_i\neq\emptyset$.
\item The stabiliser of $\Omega\cap\tilde{\Delta}_i$ in $\mathscr{A}_1$ acts transitively on $\Omega\cap\tilde{\Delta}_i$.
\end{itemize}
In our case, the second condition is automatically satisfied. For any $\Omega\in\mathscr{Q}(G_1)$, define
$$
\lambda_{\Omega}:=\sum_{i\in\{1,3,0\}}\big(\frac{1}{|\Omega\cap\tilde{\Delta}_i|}\sum_{\alpha\in\Omega\cap\tilde{\Delta}_i}\varpi^{\vee}_{\alpha} \big)\in\mathbb{Q}\otimes_{\mathbb{Z}}Y(T).
$$
Write $t_{\Omega}:=\tilde{\iota}_T(\lambda_{\Omega})$. By \cite[Theorem 4.6 (a)]{Bon}, the map $\Omega\mapsto t_{\Omega}$ induces a bijection between the set of orbits of $\mathscr{A}_1$ in $\mathscr{Q}(G_1)$ and the set of conjugacy classes of quasi-isolated semi-simple elements in $G_1$. Now, we choose $\Omega=\tilde{\Delta}_1\sqcup\tilde{\Delta}_3\sqcup\tilde{\Delta}_0$ and define $t_2=t_{\Omega}$. By \cite[Theorem 4.6 (b)]{Bon}, we have $C_{G_1}(t_2)^{\circ}=T$ and 
$$
C_{G_1}(t_2)/T\cong\{z\in\mathscr{A}_1\mid z(\Omega)=\Omega\}=\mathscr{A}_1.
$$
Since $C_{G_1}(t_2)/T=\Stab_{W(t_1)}(t_2)$ (see \cite[Proposition 1.3(b)]{Bon}), We have thus found the desired $t_2$.

It remains to describe the action of $\mathscr{A}_1$ on the tangent space of $T$ at $t_2$. We will find a coordinate on $T$ such that $t_2=(1,-1,1)$ and $\mathfrak{S}_2\cong\mathscr{A}_1$ acts by simultaneously inverting every component; therefore, the induced action on the tangent space of $t_2$ has the desired form. The character lattice $X(T)$ is equal to the weight lattice of $G$ (since $G$ is simply connected):
$$
X(T)=\mathbb{Z}\epsilon_1\oplus\mathbb{Z}\epsilon_2\oplus\mathbb{Z}\epsilon_3+\frac{1}{2}\mathbb{Z}(\epsilon_1+\epsilon_2+\epsilon_3).
$$
Then
$$
\{x_1=\epsilon_1,x_2=\epsilon_2,x_3=\frac{1}{2}(\epsilon_1+\epsilon_2+\epsilon_3)\}
$$
is a basis of $X(T)$, which we will use as a set of coordinate functions on $T$. By definition, we have
$$
\lambda_{\Omega}=\frac{1}{4}\alpha_1^{\vee}+\frac{1}{4}\alpha_3^{\vee}+\frac{1}{4}\alpha_0^{\vee}
$$
(note that $\varpi^{\vee}_{-\alpha_i}=0$ by convention). We calculate
$$
\epsilon_1(\lambda_{\Omega})=0,~\epsilon_2(\lambda_{\Omega})=-\frac{1}{2},\text{ and }\epsilon_3(\lambda_{\Omega})=\frac{1}{2}.
$$
It follows that $t_2=(1,-1,1)$ in the coordinate $(x_1,x_2,x_3)$. Finally, the element $(s_{\alpha_1},s_{\alpha_3},s_{\alpha_0})\in\mathscr{A}_1$ acts as multiplication by $(-1)$ on $X(T)$, and thus it acts as the inversion on $T$ as desired. The action of $(s_{\alpha_1},s_{\alpha_3},s_{\alpha_0})$ on the tangent space of $t_1$ is also the multiplication by $(-1)$ since $t_1$ lies in the centre of $G_1$. This completes the proof.
\end{proof}

\begin{Prop}\label{Prop-BC-g=1}
Let $G$ be an almost simple group of type $B_{\ell}$ or $C_{\ell}$, and let $g=1$. Then, the identity component $\Ch(\Pi,G)_1$ admits a symplectic resolution precisely in one of the following cases:
\begin{itemize}
\item[(i)] $G$ is of type $B_1=C_1=A_1$ or $B_2=C_2$,
\item[(ii)] $G$ is of type $B_{\ell}$ with $\ell\ge 3$ and $G$ is of adjoint type, and
\item[(iii)] $G$ is of type $C_{\ell}$ with $\ell\ge3$ and $G$ is simply connected.
\end{itemize}
\end{Prop}
\begin{proof}
The rank one case was explained in Remark \ref{Rem-PGL2}. We begin by describing a model which admits a symplectic resolution, and the proposition will be proved by identifying which cases are equivalent to this model and showing that in all other cases symplectic resolutions do not exist.

Let $m\in\mathbb{Z}_{\ge2}$ and consider the wreath product $W_m:=(\mathbb{Z}/2)^m\rtimes\mathfrak{S}_m$. It acts on the torus $T_m:=(\mathbb{C}^{\ast})^m$ in the standard manner: the symmetric group $\mathfrak{S}_m$ acts by permuting the components, and the action of $(\mathbb{Z}/2)^m$ on $(\mathbb{C}^{\ast})^m$ is componentwise inversion. We claim that $(T_m\times T_m)/W_m$ is a symplectic singularity admitting a symplectic resolution. In fact, the proof is parallel to the case of the linear quotient \cite[Proposition 1]{Wang}. Let $S$ denote the singular symplectic surface $(\mathbb{C}^{\ast}\times\mathbb{C}^{\ast})/\mu_2$. We have $(T_m\times T_m)/W_m\cong S^{[m]}$. Let $\tilde{S}$ be the minimal resolution of $S$. Then, the composition
$$
\Hilb^m(\tilde{S})\longrightarrow \tilde{S}^{[m]}\longrightarrow S^{[m]}
$$
is a symplectic resolution.

Now, let $T\subset G$ be a maximal torus and $W$ the Weyl group defined by $T$. For $G=\SO_{2m+1}$ and $G=\Sp_{2m}$, there exists an isomorphism $T\cong(\mathbb{C}^{\ast})^m$ such that $W\cong W_m$ acts in the manner described above; therefore, $\Ch(\Pi,G)_1$ admits a symplectic resolution. Note that these cases include both isogeny types for the rank two root system. 

Next, we show that if $G$ is of type $B_{\ell}$ with $\ell\ge3$ and is simply connected (i.e. $G=\Spin_{2\ell+1}$), then $\Ch(\Pi,G)$ does not admit any symplectic resolution. We will find a particular point whose formal neighbourhood does not admit symplectic resolutions. Let $(1,t)\in T\times T$, then its stabiliser is the Weyl group $W(t):=N_{C_G(t)}(T)/T$. Note that $G$ being simply connected implies that $C_G(t)$ is connected. The action of $W(t)$ on the tangent space of $t\in T$ can be identified with its action on the Lie algebra $\mathfrak{t}:=\Lie T$, since $t$ lies in the centre of $C_G(t)$. The action of $W(t)$ on the formal neighbourhood of $(1,t)$ can be linearlised so that it is equivalent to the linear action of $W(t)$ on $\mathfrak{t}\oplus\mathfrak{t}$. The Dynkin diagram of $C_G(t)$ is a proper subdiagram of the affine Dynkin diagram of $G$, which is
$$
\begin{tikzcd}[row sep=0em, column sep=0.2em]
\alpha_1 & \circled{1} \arrow[drr, swap, shorten=-2.5mm, dash, yshift=-0.1mm] &&&&&&&&\\
 & && \circled{2} \arrow[rr, swap, shorten=-2mm, dash, yshift=-0.1mm] && \cdots \arrow[rr, swap, shorten=-2mm, dash, yshift=-0.1mm] && \circled{2} \arrow[rr, swap, shorten=-2mm, equal, yshift=-0.1mm, ">" marking] && \circled{2}\\
\alpha_0 & \circled{1} \arrow[urr, swap, shorten=-2.5mm, dash, yshift=-0.1mm] && \alpha_2 && \cdots && \alpha_{\ell-1} && \alpha_{\ell}
\end{tikzcd}.
$$
Let $t\in T$ be such that the Dynkin diagram of $C_G(t)$ is the subdiagram with vertices $\{\alpha_0,\alpha_1,\ldots,\alpha_{\ell-1}\}$ so that it is of type $D_{\ell}$. If $\ell\ge4$, then \cite[Corollary 1.2]{Bel} implies that the quotient $(\mathfrak{t}\oplus\mathfrak{t})/W(t)$ does not admit any symplectic resolution. The case $\ell=3$ is the content of Proposition \ref{Prop-Spin7}.

It remains to show that for $G=\Sp_{2\ell}/Z$, where $Z$ is the centre of $\Sp_{2\ell}$ and $\ell\ge3$, the identity component $\Ch(\Pi,G)_1$ does not admit any symplectic resolution. We will use the same strategy as the proof of Theorem \ref{Thm-g>1-res}: construct a singular $\mathbb{Q}$-factorial terminalisation of $\Ch(\Pi,G)_1$. Write $\tilde{G}=\Sp_{2\ell}$, and use the identification $\Ch(\Pi,\tilde{G})\cong(T_{\ell}\times T_{\ell})/W_{\ell}$. Note that $Z$ is identified with the the fixed points of $W_{\ell}$ in $T_{\ell}$, which is $\{\pm 1\}$, and the action of $Z^2$ on $\Ch(\Pi,\tilde{G})$ is induced from the multiplication by $\{\pm 1\}^2$ on $T_{\ell}\times T_{\ell}$ componentwise. We can further identify this action with the action on $S^{[\ell]}=(T_{\ell}\times T_{\ell})/W_{\ell}$ induced by the action of $Z^2$ on $S$. The action of $Z^2$ on $S$ lifts to $\tilde{S}$ (this can be seen, for example, by using the fact that $\tilde{S}$ is a minimal resolution), and the Hilbert-Chow morphism $\Hilb^{\ell}(\tilde{S})\longrightarrow \tilde{S}^{[\ell]}$ is equivariant with respect to the induced $Z^2$-action. Therefore, the symplectic resolution $\Hilb^{\ell}(\tilde{S})\rightarrow (T_{\ell}\times T_{\ell})/W_{\ell}$ is also $Z^2$-equivariant. By passing to the $Z^2$-quotient, we obtain a map 
$$
\Hilb^{\ell}(\tilde{S})/Z^2\longrightarrow\Ch(\Pi,G)_1.
$$ 
We need to show that this is a $\mathbb{Q}$-factorial terminalisation and $\Hilb^{\ell}(\tilde{S})/Z^2$ is singular. 

Finite quotients of smooth varieties are $\mathbb{Q}$-factorial. To show that $\Hilb^{\ell}(\tilde{S})/Z^2$ has terminal singularities, we need to determine the codimension of the fixed point locus of $Z^2$. By \cite[Corollary 3.10]{BS}, it suffices to determine the codimension of the fixed point locus of $Z^2$ in $(T_{\ell}\times T_{\ell})/W_{\ell}$. Let $(t_1,\ldots,t_{\ell})\in T_{\ell}$ and suppose that $(t_1,\ldots,t_{\ell})=w\cdot(-t_1,\ldots,-t_{\ell})$ for some $w\in W_{\ell}$. Then, $-t_1=t_i^{\eta}$ for some $i$ and $\eta\in\{\pm1\}$. Note that $i$ and $\eta$ only depends on $w$. If $i=1$, then $t_1^2=-1$. If $i\ne1$, then $-t_1=t_i$, replacing $t_i$ by $t_i^{-1}$ if necessary. The same reasoning works for any $j$ in place of $1$. We see that the dimension of the locus of these $(t_1,\ldots,t_{\ell})$ does not exceed $[\ell/2]$, which is larger than two under our assumption $\ell\ge3$. If $(t_1',\ldots,t_{\ell}')$ satisfies $(t'_1,\ldots,t_{\ell}')=w\cdot(t_1',\ldots,t_{\ell}')$ for the $w$ above, then either $t_1^2=1$ or $t_1=t_i$; therefore, the dimension of these $(t_1',\ldots,t_{\ell}')$ is also smaller than $[\ell/2]$. It follows that the codimension of the fixed points of $(1,-1)\in Z^2$ is at least four. The same is true for $(-1,-1)\in Z^2$. We have shown that $\Hilb^{\ell}(\tilde{S})/Z^2$ has terminal singularities.

Since the locus of fixed points has codimension at least four, the stabiliser of any fixed point cannot act as a reflection group in its formal neighbourhood, and thus the image of every point with nontrivial stabiliser is a singular point. The set of fixed points in $(T_{\ell}\times T_{\ell})/W_{\ell}$ is nonempty, since we can solve the equations for $(t_1,\ldots,t_{\ell})$ and $(t_1',\ldots,t_{\ell}')$ in the previous paragraph. We can choose the solutions in such a way that they define a point in the locus where $\Hilb^{\ell}(\tilde{S})\rightarrow(T_{\ell}\times T_{\ell})/W_{\ell}$ is an isomorphism, so that the resulting point is also a fixed point in $\Hilb^{\ell}(\tilde{S})$. For example, if $\ell=2m$, then we define
\begingroup
\allowdisplaybreaks
\begin{align*}
(t_1,\ldots,t_{\ell})&=(a_1,-a_1,\ldots,a_m,-a_m)\\
(t_1',\ldots,t_{\ell}')&=(1,\ldots,1),
\end{align*}
\endgroup  
satisfying $a_i^{\pm 1}\ne \pm a_j$ for any $i\ne j$ and $a_i^2\ne \pm 1$ for any $i$; if $\ell=2m+1$, then we define $t_i$ and $t_i'$ in the same way for $i\le 2m$ and define $(t_{\ell},t'_{\ell})=(\sqrt{-1},1)$. Then, it is easy to see that the multiplication by $(-1,1)$ on this element of $T_{\ell}\times T_{\ell}$ lands in the same $W_{\ell}$-orbit. This shows that $\Hilb^{\ell}(\tilde{S})/Z^2$ is indeed singular. The proposition is proved.
\end{proof}

\subsection{Nonidentity components}\hfill

The situation of general connected components can be reduced to the case of identity components. To analyse the singularities of the quotient $(T_z\times T_z)/W_z$, it suffices to describe the action of $W_z$ on $T_z$. Recall that to every $z\in\pi_1(G)$ is associated a torus $\tilde{T}_z$ (up to conjugation) of the simply connected cover $\tilde{G}$, and that $T_z$ is a finite quotient of $\tilde{T}_z$.

\begin{Prop}\label{Prop-symp-res-simple-twisted}
Let $\tilde{G}$ be a simply connected almost simple group and let $1\ne z\in Z_{\tilde{G}}$. Then, the finite quotient $(\tilde{T}_z\times \tilde{T}_z)/W_z$ admits a symplectic resolution precisely in the following cases:
\begin{itemize}
\item $\tilde{G}$ is of type $A$, $B$, $C$, $D_{2m+1}$.
\item $\tilde{G}$ is of type $D_{2m}$ and $z$ generates the fundamental group of $\SO_{4m}$.
\item $\tilde{G}$ is of type $D_{2m}$, $z$ does not lie in the fundamental group of $\SO_{4m}$ and $m\in\{1,2\}$.
\end{itemize}
\end{Prop}
Note that the fundamental group of $\SO_{2n}$ can be identified with a subgroup of $Z_{\Spin_{2n}}$.
\begin{proof}
We can regard the quotient $(\tilde{T}_z\times\tilde{T}_z)/W_z$ as the identity component for some other almost simple group, and deduce the result from \S \ref{subsec-iden-comp}. By Proposition \ref{Prop-DM18-XY} and Proposition \ref{Prop-BFM6.2.6}, the cocharacter lattice of $\tilde{T}_z$ and the coroot lattice of $\Phi^{proj}(z)^{\vee}$ are the same, since $Y(\tilde{T})=Q^{\vee}$ by assumption. The root system $\Phi^{proj}(z)$ was computed by Borel-Friedman-Morgan. See \cite[Diagrams and tables, Root systems on $\mathfrak{t}^{w_C}$]{BFM}.

\begin{itemize}
\item Suppose $G=\SL_n$ and $z$ has order $d$, with $n=dm$. The root system $\Phi^{proj}(z)$ is of type $A_{m-1}$. It follows that $(\tilde{T}_z\times\tilde{T}_z)/W_z$ is isomorphic to $\Ch(\Pi,\SL_m)$, which admits a symplectic resolution.
\item Suppose $\tilde{G}=\Spin_{2n+1}$ and $1\ne z\in Z_{\tilde{G}}$. The root system $\Phi^{proj}(z)$ is of type $BC_{n-1}$; therefore, the coroot lattice of $\Phi^{proj}(z)^{\vee}$ is the same as the coroot lattice for a root system of type $C_{n-1}$. It follows that $(\tilde{T}_z\times\tilde{T}_z)/W_z$ is isomorphic to $\Ch(\Pi,\Sp_{2(n-1)})$, which admits a symplectic resolution.
\item Suppose $\tilde{G}=\Sp_{4m}$ or $\Sp_{2(2m+1)}$ and $1\ne z\in Z_{\tilde{G}}$. The root system $\Phi^{proj}(z)$ is of type $BC_{m}$. Again, $(\tilde{T}_z\times\tilde{T}_z)/W_z$ is isomorphic to $\Ch(\Pi,\Sp_{2m})$ and thus admits a symplectic resolution.
\item Suppose $\tilde{G}=\Spin_{2n}$ and $z\in Z_{\tilde{G}}$ generates the fundamental group of $\SO_{2n}$. The root system $\Phi^{proj}(z)$ is of type $C_{n-2}$. It follows that $(\tilde{T}_z\times\tilde{T}_z)/W_z$ is isomorphic to $\Ch(\Pi,\Sp_{2(n-2)})$, which admits a symplectic resolution.
\item Suppose $\tilde{G}=\Spin_{2(2m+1)}$ and $z$ generates $Z_{\tilde{G}}$. The root system $\Phi^{proj}(z)$ is of type $BC_{m-1}$. It follows that $(\tilde{T}_z\times\tilde{T}_z)/W_z$ is isomorphic to $\Ch(\Pi,\Sp_{2(m-1)})$, which admits a symplectic resolution.
\item Suppose $\tilde{G}=\Spin_{4m}$ and $z\in Z_{\tilde{G}}$ does not lie in the fundamental group of $\SO_{4m}$. The root system $\Phi^{proj}(z)$ is of type $B_{m}$. It follows that $(\tilde{T}_z\times\tilde{T}_z)/W_z$ is isomorphic to $\Ch(\Pi,\Spin_{2m+1})$, which admits a symplectic resolution if $m\in\{1,2\}$ and no symplectic resolution otherwise; see Proposition \ref{Prop-BC-g=1}.

\item Suppose that $\tilde{G}$ is of type $E_6$ or $E_7$ and $1\ne z\in Z_{\tilde{G}}$. Then, $\Phi^{proj}(z)$ is of type $G_2$ or $F_4$. It follows that $(\tilde{T}_z\times\tilde{T}_z)/W_z$ does not admit any symplectic resolution.
\end{itemize}
\end{proof}

Consider the central isogeny
$$
1\longrightarrow Z\longrightarrow\tilde{G}\longrightarrow G\longrightarrow 1,
$$
where $\tilde{G}$ is a simply connected almost simple group. Let $z\in Z$ and choose a maximal torus $\tilde{T}\subset\tilde{G}$. We will use the following notations:
\begin{equation}
\begin{tikzcd}[row sep=2.5em, column sep=2em]
\tilde{T} \arrow[r, "\tilde{p}"] \arrow[d, swap, "q_1"] & \tilde{T}/[\tilde{T},w_z] \arrow[d, "q_2"]\\
T \arrow[r, "p"'] & T/[T,w_z],
\end{tikzcd}
\end{equation}
where $\tilde{p}$, $p$ and $q_1$ are the natural projections, and $q_2$ is the morphism induced by $q_1$.

\begin{Lem}\label{Lem-Wz-Zbar}
Write $\bar{Z}:=\tilde{p}(Z)$. Then, $q_2$ is the quotient by $\bar{Z}$. Moreover, the action of $W_z$ on $\tilde{T}/[\tilde{T},w_z]$ fixes $\bar{Z}$ pointwise.
\end{Lem}
\begin{proof}
The kernel of $p\circ q_1$ is equal to $Z\cdot[\tilde{T},w_z]$, and the first assertion follows. Recall the action of $W_z$ on $\tilde{T}/[\tilde{T},w_z]$. By definition, we have $$
W_z\cong N_{\tilde{G}}(\tilde{S}_z)/C_{\tilde{G}}(\tilde{S}_z)\cong N_{\tilde{G}}(\tilde{L}_z)/\tilde{L}_z\cong N_W(W_{\tilde{L}_z})/W_{\tilde{L}_z},
$$
where $W_{\tilde{L}_z}$ is the Weyl group of $\tilde{L}_z$ defined by $\tilde{T}$. Any element of $W_z$ is thus represented by an element of $W$, and thus it acts on $\tilde{T}$. We claim that the induced action on $\tilde{T}/[\tilde{T},w_z]$ is independent of the choice of the representative. It suffices to show that for any $\tilde{t}\in\tilde{T}$ and any $w\in W_{\tilde{L}_z}$, we have $\tilde{t}w(\tilde{t})^{-1}\in[\tilde{T},w_z]$. By Lemma \ref{Lem-T1=Tw}, we have $[\tilde{T},w_z]=\tilde{T}_1$, the maximal torus of the derived subgroup of $\tilde{L}_z$. Write $\tilde{t}=\tilde{t}_0\tilde{t}_1$, with $\tilde{t}_0\in \tilde{S}_z$ and $\tilde{t}_1\in\tilde{T}_1$. Note that $W_{\tilde{L}_z}$ fixes $\tilde{S}_z$ pointwise; therefore, $\tilde{t}w(\tilde{t})^{-1}=\tilde{t}_1w(\tilde{t}_1)^{-1}$, which lies in $\tilde{T}_1=[\tilde{T},w_z]$. Finally, since $Z$ is contained in $Z_{\tilde{G}}$, the action of $W$ fixes $Z$ pointwise. It follows that $W_z$ fixes $\bar{Z}$ pointwise.
\end{proof}

\begin{Lem}\label{Lem-z2-in-T1}
Let $z_i\in Z_{\tilde{G}}$ for $i\in\{1,2\}$. Write $\tilde{L}_i:=\tilde{L}_{z_i}$ and denote by $\tilde{T}_i\subset\tilde{T}$ the maximal torus of the derived subgroup of $\tilde{L}_i$.  Then, $z_2\in\tilde{T}_1$ if and only if $\tilde{T}_2\subset\tilde{T}_1$.
\end{Lem}
\begin{proof}
Recall that $w_{z_i}$ and thus $\tilde{L}_{z_i}$ depends on the choice of a basis of simple coroots $\Delta^{\vee}$. Denote by $\Delta_i^{\vee}$ the set of the simple coroots of $\tilde{L}_i$. Let $\varpi_i^{\vee}$ be the fundamental coweight corresponding to $z_i$, and suppose $\varpi_i^{\vee}=\sum_{\alpha^{\vee}}\lambda_{i,\alpha^{\vee}}\alpha^{\vee}$ for some rational numbers $\lambda_{i,\alpha^{\vee}}$.  It follows from \cite[Proposition 3.5.4]{BFM} (by taking the centralisers of the tori there) that
\begin{equation*}
\Delta_i^{\vee}=\{\alpha^{\vee}\in\Delta^{\vee}\mid \text{ $\lambda_{i,\alpha^{\vee}}$ is not an integer}\}.
\end{equation*}
By \cite[Proposition 8.1.8 (iii)]{Spr}, $\tilde{T}_i$ is generated by the images of the elements of $\Delta_i^{\vee}$. Since $Y(\tilde{T})=\bigoplus_{\alpha^{\vee}\in\Delta^{\vee}}\mathbb{Z}\alpha^{\vee}$, the inclusion $\tilde{T}_2\subset\tilde{T}_1$ is equivalent to $\Delta_2^{\vee}\subset\Delta_1^{\vee}$; that is, $\lambda_{2,\alpha^{\vee}}$ is an integer for any $\alpha^{\vee}\in\Delta^{\vee}\setminus\Delta_1^{\vee}$. The numbers $\lambda_{2,\alpha^{\vee}}$ form the coordinates of $\varpi_2^{\vee}$, and thus $z_2\in\tilde{T}_1$ if and only if $\lambda_{2,\alpha^{\vee}}$ is an integer for any $\alpha^{\vee}\in \Delta^{\vee}\setminus\Delta_1^{\vee}$.
\end{proof}

\begin{Prop}\label{Prop-g=1-resol}
Let $G$ be an almost simple group with simply connected cover $\tilde{G}$. Let $Z$ be the kernel of $\tilde{G}\rightarrow G$ and let $1\ne z\in Z$. Then, the connected component $\Ch(\Pi,G)_z$ admits a symplectic resolution precisely in the following cases:
\begin{itemize}
\item $G\cong\SL_n/ Z$, and $z$ is an element of order $d$ such that either $n=2d$ or $z$ generates $Z$.
\item $G\cong\SO_{2n+1}$. 
\item $G\cong \PSp_{2n}$.
\item $G\cong \SO_{2n}$, and $n\ge 5$.
\item $G\cong \PSO_{2(2m+1)}$, $z$ generates $Z$ and $m\ge 2$.
\item $G\cong \PSO_{4m}$, and $z$ does not lie in the fundamental group of $\SO_{4m}$, with $m\ge 3$.
\item $G\cong \Spin_8/Z$, where $Z$ is any subgroup of $Z_{\tilde{G}}$ containing $z$.
\end{itemize}
\end{Prop}
\begin{proof}
Let us first consider the case $\tilde{G}\cong\SL_n$. Suppose that $z$ is an element  of order $d$, and that $Z$ is generated by an element $z_0$ of order $d_0$. Write $n=dm$. We have seen in the proof of Proposition \ref{Prop-symp-res-simple-twisted} that $(\tilde{T}_z\times\tilde{T}_z)/W_z$ is isomorphic to $\Ch(\Pi,\SL_m)$. By Lemma \ref{Lem-Wz-Zbar}, we may regard $\bar{Z}$ as a subgroup of the centre of $\SL_m$, and $(T_z\times T_z)/W_z$ is simply the quotient of $(\tilde{T}_z\times\tilde{T}_z)/W_z$ by $\bar{Z}^2$. It follows from Proposition \ref{Prop-BS-Thm1.10} that $\Ch(\Pi,\SL_m)/\bar{Z}^2$ admits a symplectic resolution precisely when either $m=2$ or $\bar{Z}$ is trivial; the latter condition means $z_0\in [\tilde{T},w_z]$. Now, Lemma \ref{Lem-T1=Tw} and Lemma \ref{Lem-z2-in-T1} show that this is equivalent to $[\tilde{T},w_{z_0}]\subset[\tilde{T},w_z]$. Since $z\in Z$, we have that $z$ is a power of $z_0$, and thus $w_z$ is a power of $w_{z_0}$; therefore, we have $[\tilde{T},w_z]\subset[\tilde{T},w_{z_0}]$. It follows that $z$ also generates $Z$. 

Consider $\tilde{G}=\Spin_{2n+1}$ and suppose $z\ne 1$ and $Z=Z_{\tilde{G}}$. We have seen in the proof of Proposition \ref{Prop-symp-res-simple-twisted} that $(\tilde{T}_z\times\tilde{T}_z)/W_z$ is isomorphic to $\Ch(\Pi,\Sp_{2(n-1)})$. Now, $z$ generates $Z$ and lies in $[\tilde{T},w_z]$, and thus $\bar{Z}$ is trivial. Therefore, $(T_z\times T_z)/W_z$ is also isomorphic to $\Ch(\Pi,\Sp_{2(n-1)})$.

Consider $\tilde{G}=\Sp_{2n}$ and suppose $z\ne 1$ and $Z=Z_{\tilde{G}}$. We have seen in the proof of Proposition \ref{Prop-symp-res-simple-twisted} that $(\tilde{T}_z\times\tilde{T}_z)/W_z$ is isomorphic to $\Ch(\Pi,\Sp_{2m})$ for some $m$. Similar to the case of type $B$, we find that $(T_z\times T_z)/W_z$ is also isomorphic to $\Ch(\Pi,\Sp_{2(n-1)})$.

Consider $\tilde{G}=\Spin_{2(2m+1)}$ and suppose $z$ generates $Z_{\tilde{G}}$. Again, $(\tilde{T}_z\times\tilde{T}_z)/W_z$ is isomorphic to $\Ch(\Pi,\Sp_{2(m-1)})$. Since $\bar{Z}$ is trivial, we see that $(T_z\times T_z)/W_z$ is also isomorphic to $\Ch(\Pi,\Sp_{2(m-1)})$.

Consider $\tilde{G}=\Spin_{2n}$ and suppose that $z\in Z_{\tilde{G}}$ generates the fundamental group of $\SO_{2n}$. We have seen that $(\tilde{T}_z\times\tilde{T}_z)/W_z$ is isomorphic to $\Ch(\Pi,\Sp_{2(n-2)})$. Suppose that $n\ge 5$. In order for $\Ch(\Pi,\Sp_{2(n-2)})/\bar{Z}^2$ to admit symplectic resolutions, we must have $\bar{Z}=\{1\}$. If $Z$ is a cyclic group containing $z$, then an argument similar to the case of type $A$ shows that $\bar{Z}=\{1\}$ is equivalent to $Z$ being generated by $z$. It follows that $\Ch(\Pi,\PSO_{2(2m+1)})_z\cong\Ch(\Pi,\Sp_{2(2m-1)})/\bar{Z}^2$ does not admit any symplectic resolution and $\Ch(\Pi,\SO_{2n})_z$ admits symplectic resolutions. If $Z$ is not cyclic, then we choose $z'\notin\{1,z\}$. Now, $n=2m$ with $m\ge 3$. According to \cite[Diagrams and tables, Root systems on $\mathfrak{t}^{w_C}$]{BFM}, the derived subgroup of $\tilde{L}_{z'}$ is isomorphic to the direct product of $m$-copies of $\SL_2$, but the derived subgroup of $\tilde{L}_z$ is isomorphic to $\SL_2\times\SL_2$. We deduce that $z'\notin[\tilde{T},w_z]$, so that $\bar{Z}$ is nontrivial. If $n=4$, then $\Ch(\Pi,\Sp_{2(n-2)})/\bar{Z}^2$ always admits symplectic resolutions, regardless of $\bar{Z}$.

Consider $\tilde{G}=\Spin_{4m}$ and $z\in Z_{\tilde{G}}$ not lying in the fundamental group of $\SO_{4m}$. We have seen that $(\tilde{T}_z\times\tilde{T}_z)/W_z$ is isomorphic to $\Ch(\Pi,\Spin_{2m+1})$, which admits a symplectic resolution if $m\in\{1,2\}$ and no symplectic resolution otherwise. The case of $m=1$ is excluded due to overlap with type $A_1\times A_1$. Suppose $m=2$. By Proposition \ref{Prop-BC-g=1}, $(T_z\times T_z)/W_z$ admits a symplectic resolution, regardless of $\bar{Z}$. Suppose $m\ge 3$. By Proposition \ref{Prop-BC-g=1} again, $(T_z\times T_z)/W_z$ admits a symplectic resolution precisely when $\bar{Z}$ is nontrivial, which happens only if $Z=Z_{\tilde{G}}$.
\end{proof}

\addtocontents{toc}{\protect\setcounter{tocdepth}{1}}
\appendix

\section{Existence of orbifold singularities}\label{sec-Exist-orb}\hfill

It is well-known that connected components of $\PGL_n$-character varieties are finite quotients of twisted $\SL_n$-character varieties $\Ch_z(\Pi,\SL_n)$ and thus provide examples of orbifold singularities in the stable locus. However, it is difficult to find a reference showing that this finite group action is \textit{not} free, so that the quotient is indeed singular. One possible way to see this is via the nonabelian Hodge correspondence. The isosingularity theorem of Simpson \cite{Si} implies that it suffices to find orbifold singularities in the moduli of Higgs bundles. The problem then becomes finding Higgs bundles that are fixed under tensoring by torsion line bundles, which has endoscopic Higgs bundles as solutions (see, e.g. \cite{RS}). 

The purpose of this appendix is to explicitly construct irreducible $\PGL_n$-representations with finite automorphism groups, assuming $g>1$, thus giving a straightforward solution that only involves character varieties. Let $\tilde{G}=\SL_n$, $G=\PGL_n$, $Z:= Z_{\tilde{G}}$, and $z\in Z$. Consider the following commutative diagram: 
$$
\begin{tikzcd}[row sep=2.5em, column sep=2em]
\Rep^{\heartsuit}_z(\Pi,\tilde{G}) \arrow[r, "\tilde{\pi}"] \arrow[d, swap, "\pi_1"] & \Ch^{\heartsuit}_z(\Pi,\tilde{G}) \arrow[d, "\pi_2"]\\
\Rep^{\heartsuit}(\Pi,G)_z \arrow[r, "\pi"'] & \Ch^{\heartsuit}(\Pi,G)_z.
\end{tikzcd}
$$
Recall from \S \ref{sec-Rep-Ch} that $\heartsuit$ indicates the locus of irreducible representations. The upper horizontal arrow is the quotient by $\tilde{G}$ of the variety of $z$-twisted representations, and the lower horizontal arrow is the quotient by $G$ of the connected component corresponding to $z$. The vertical arrows are the quotients by $Z^{2g}$. 

\begin{Prop}\label{Prop-App-A1}
Let $\rho\in\Rep^{\heartsuit}_z(\Pi,\tilde{G})$. Then, there is an isomorphism of groups 
$$
\Stab_{Z^{2g}}(\tilde{\pi}(\rho))\cong\Stab_{G}(\pi_1(\rho)).
$$
\end{Prop}
\begin{proof}
Write $\rho=(A_i,B_i)_i\in \tilde{G}^{2g}$ and let $(\lambda_i,\mu_i)_i\in Z^{2g}$. That $(\lambda_i,\mu_i)_i$ stabilises $\tilde{\pi}(\rho)$ means that there exists $g\in\tilde{G}$ such that $gA_ig^{-1}=\lambda_iA_i$ and $gB_ig^{-1}=\mu_iB_i$ for every $i$. Since $\rho$ is irreducible, other choices of $g$ only differ by $Z_{\tilde{G}}$; thus, there is a well-defined element $\bar{g}\in G$. By construction, $\bar{g}$ lies in $\Stab_{G}(\pi_1(\rho))$. Conversely, let $\bar{g}\in\Stab_{G}(\pi_1(\rho))$. Then, there exists a unique $(\lambda_i,\mu_i)\in Z^{2g}$ such that $gA_ig^{-1}=\lambda_iA_i$ and $gB_ig^{-1}=\mu_iB_i$ for every $i$ and for any lift $g\in\tilde{G}$ of $\bar{g}$. Obviously, the two maps are homomorphisms and inverse to each other.
\end{proof}

\begin{Prop}\label{Prop-App-A2}
Suppose that $z\in Z$ is a primitive $n$-th root of unity. For any positive integers $d$ and $m$ such that $n=dm$, let $H_d:=\GL_d\times\cdots\GL_d$ be a direct product of $m$ copies of $\GL_d$, regarded as a block diagonal Levi subgroup of $\GL_n$. Let $\gamma\in\GL_n$ be a block permutation matrix of order $m$ that acts on $H_d$ by cyclically permuting the direct factors. Write $\tilde{H}_d:=H_d\rtimes\langle \gamma\rangle\subset\GL_n$. Then, $\Rep_z(\Pi,\tilde{H}_d)$ is nonempty.
\end{Prop}
\begin{proof}
Write $\rho=(A_i,B_i)_i$, put $A_i=B_i=1$ for any $i\ge 3$ and $A_2=\gamma$, and write $B_2=l$. We will find $A_1$, $B_1$ and $l$ lying in  $H_d$ such that 
$$
A_1B_1A_1^{-1}B_1^{-1}\gamma l\gamma^{-1}l^{-1}=z.
$$
Write $c=z^d$ and define $l$ so that its $i$-th $\GL_d$-component is the diagonal matrix $\diag(1,\ldots,1,c^{i-1})$ for $1\le i\le m$. Since $\gamma$ acts on $H_d$ by permutation, the $i$-th $\GL_d$-component of $\gamma l\gamma^{-1}l^{-1}$ is the diagonal matrix $\diag(1,\ldots,1,c)$, where we have implicitly chosen a direction of this cyclic permutation. Now, there exists $a_1$ and $b_1\in\GL_d$ such that 
$$
a_1b_1a_1^{-1}b_1^{-1}=\diag(z,\ldots,z,zc^{-1}),
$$
since it is known that the map $\GL_d\times\GL_d\rightarrow\SL_d$ defined by commutator is surjective.
\end{proof}
\begin{Cor}\label{Cor-App-A3}
Suppose that $z\in Z$ is primitive. Then, for any $m$ dividing $n$, there exists $\rho\in\Rep_z(\Pi,\tilde{G})$ such that $\Stab_{Z^{2g}}(\tilde{\pi}(\rho))$ contains an element of order $m$.
\end{Cor}
\begin{proof}
By Proposition \ref{Prop-App-A1}, it suffices to find a $\rho$ such that $\Stab_{G}(\pi_1(\rho))$ contains an element of order $m$. According to \cite[Proposition 5.2]{Bon}, the image of $\tilde{H}_d$ in $G$ is the centraliser of an element $\bar{s}$ of order $m$. Indeed, we may choose a representative of such an element to be
$$
\diag(1,\ldots,1,\xi,\ldots,\xi,\xi^2,\ldots,\xi^2,\ldots,\xi^{m-1},\ldots,\xi^{m-1}),
$$
where $\xi$ is some primitive $m$-th root of unity and each $\xi^i$ has multiplicity $d$. By Proposition \ref{Prop-App-A2}, there exists $\bar{\rho}\in\Rep(\Pi,G)_z$ with $\bar{s}\in\Stab_G(\bar{\rho})$. Since $\pi_1$ is surjective, there exists $\rho$ such that $\pi_1(\rho)=\bar{\rho}$.
\end{proof}

We can explicitly construct the order-$m$ element given in the above Corollary in the following particular situation.
\begin{Prop}\label{Prop-App-A5}
Suppose that $z\in Z$ is primitive. Then, there exists $\rho\in\Rep_z(\Pi,\tilde{G})$ such that $\Stab_{Z^{2g}}(\tilde{\pi}(\rho))$ contains the element $(z,z,\ldots,z)$.
\end{Prop}
\begin{proof}
Suppose $g=2$; the case $g>3$ is completely analogous. Let $t=\diag(1,z,z^2,\ldots,z^{n-1})$ and let $n$ be the permutation matrix cyclically permuting the diagonal entries so that $ntn^{-1}=zt$. Define $A_1=B_1=B_2=t$ and $A_2=nt$, then 
$$
A_1B_1A_1^{-1}B_1^{-1}A_2B_2A_2^{-1}{B_2}^{-1}=z.
$$
Modifying $A_i$ and $B_i$ by scalars if necessary, we obtain $(A_1',B_1',A_2',B_2')\in\SL_n^4$ satisfying the same equation; this defines $\rho:=(A_1',B_1',A_2',B_2')$. Now, $\rho$ is conjugate to $(z,z,z,z)\cdot\rho$ by $n$, so that $(z,z,z,z)$ lies in $\Stab_{Z^{2g}}(\tilde{\pi}(\rho))$.
\end{proof}
\begin{Rem}\label{Rem-App-A4}
In view of endoscopic Higgs bundles, Corollary \ref{Cor-App-A3} should hold without assuming $z$ to be primitive while replacing $\Rep_z(\Pi,\tilde{G})$ by $\Rep_z^{\heartsuit}(\Pi,\tilde{G})$. Let us illustrate this with $\SL_2$ and $g=2$. Let $A_1$ and $B_1$ be any regular semi-simple diagonal matrices in $\SL_2$ and let $l=1$. Let $n\in\SL_2$ be the skew-diagonal matrix with entries $1$ and $-1$. Then, 
$$
A_1B_1A_1^{-1}B_1^{-1}nln^{-1}l^{-1}=1.
$$
Now, $\rho=(A_1,B_1,A_2=n,B_2=l)$ defines an irreducible representation of $\Pi$. Indeed, if a Borel $B$ contains $A_1$ or $B_1$, then it does not contain $n$. As in the proof of the above Corollary, the diagonal matrix $\diag(1,-1)$ represents an element of $\PGL_2$ that stabilises $\pi_1(\rho)$. 
\end{Rem}

\section{A proof of Dr\'ezet's theorem}\label{sec-Drezet}\hfill

In a previous version of this article, the author was unaware of Popov's result \cite[Remark 3, pp376]{Pop} on descending factoriality along GIT quotients, and an attempt to prove factoriality was made by applying a reformulation of Dr\'ezet's theorem (see \cite[Theorem 6.7]{BS0} and \cite{Dre1}). That theorem reduces the problem of local factoriality to the local triviality of stabiliser group actions; it turns out that it complicates our problem and so is not applied in the current version. Nevertheless, we found that Dr\'ezet's criterion for the local factoriality of GIT quotients follows from some results of Knop-Kraft-Vust \cite{KKV} in a straightforward manner. We would like to record this proof in this appendix. 

What we need from \cite{KKV} is summarised as follows. Suppose that we have a good quotient $\pi:X\rightarrow X\ds G$ of an irreducible normal variety $X$ by a semi-simple group $G$. For any variety $X'$, we denote by $\Pic X'$ the Picard group of $X'$. We also denote by $\Pic_GX$ the group of line bundles on $X$ that are $G$-equivariant. For any algebraic group $G'$, we denote by $X(G')=\Hom(G',\mathbb{G}_m)$ the group of characters of $G'$. Then, there are two exact sequences of groups (see \cite[\S 5]{KKV} and \cite[Lemma 2.2]{KKV}):
\begingroup
\allowdisplaybreaks
\begin{align}\label{eq-fac-Ch-1}
1\longrightarrow \Pic X\ds G\longrightarrow\Pic_GX\stackrel{\Phi_1}{\longrightarrow}\prod_{x\in\mathcal{C}}X(G_x),\text{ and}\\
\label{eq-fac-Ch-2}
1\longrightarrow \Pic_GX \longrightarrow\Pic X\stackrel{\Phi_2}{\longrightarrow}\Pic G,
\end{align}
\endgroup
where $\mathcal{C}$ is a set of representatives of the closed orbits in $X$, and $\Phi_1$ associates to any $G$-equivariant line bundle $\mathcal{L}$ the character by which the stabiliser $G_x$ acts on the fibre $\mathcal{L}_x$. Note that the group $H^1_{alg}(G,\mathcal{O}(X)^{\ast})$ as in \cite{KKV} is trivial. Indeed, it fits into the following exact sequence (see \cite[Proposition 2.3]{KKV})
$$
X(G)\longrightarrow H^1_{alg}(G,\mathcal{O}(X)^{\ast})\longrightarrow H^1(G/G^{\circ},E(X)),
$$ 
where $E(X)$ is a finitely generated free abelian group (\cite[\S 1.3]{KKV}). Since $G$ is assumed to be semi-simple, the character group $X(G)$ is trivial, while the target is also trivial because $G$ is connected. Note that $\Pic G$ is isomorphic to the Cartier dual of $\pi_1(G)$, which is finite. 
\begin{Prop}\label{Prop-Ch-fac}
Let $X$ be a factorial normal variety, $G$ a semi-simple group acting on $X$, and suppose that we have a good quotient $f:X\rightarrow Y:=X\ds G$. Suppose that 
\begin{itemize}
\item[(i)] there is a smooth open subset $Y^{\diamondsuit}\subset Y$ such that the restriction $X^{\diamondsuit}=f^{-1}(Y^{\diamondsuit})\rightarrow Y^{\diamondsuit}$ is a principal $\bar{G}$-bundle, with $\bar{G}:=G/Z_G$,
\item[(ii)] the complement $Y^{\blacklozenge}=Y\setminus Y^{\diamondsuit}$ has codimension at least two; and 
\item[(iii)] the complement $X^{\blacklozenge}=X\setminus X^{\diamondsuit}$ has codimension at least two.
\end{itemize} 
Then, the following assertions hold:
\begin{itemize}
\item[(a)] $Y$ is $\mathbb{Q}$-factorial if and only if the map $\Phi_1$ as in (\ref{eq-fac-Ch-1}) has finite image.
\item[(b)] $Y$ is factorial if and only if $\Phi_1$ has trivial image.
\end{itemize}
\end{Prop}
\begin{proof}
Observe that there is a diagram of maps:
$$
\begin{tikzcd}[row sep=2.5em, column sep=2em]
\Pic(Y) \arrow[r, hook] \arrow[d, hook, "i_1"] & \Pic(Y^{\diamondsuit}) \arrow[r, equal] \arrow[d, hook, "i_2"] &\Cl(Y^{\diamondsuit}) \arrow[r, equal] & \Cl(Y)\\
\Pic(X) \arrow[r, equal] & \Pic(X^{\diamondsuit})
\end{tikzcd}
$$
It follows from the assumption (iii) that we have an equality of divisor class groups $\Cl(Y)=\Cl(Y^{\diamondsuit})$. Since $Y^{\diamondsuit}$ is smooth, we have $\Cl(Y^{\diamondsuit})=\Pic(Y^{\diamondsuit})$. Since $Y$ is normal, the restriction map $\Pic(Y)\rightarrow\Pic(Y^{\diamondsuit})$ is injective but is not yet known to have finite cokernel. Applying the results of \cite{KKV} to the $\bar{G}$-action on $X^{\diamondsuit}$, we see that $\Pic(Y^{\diamondsuit})$ is a subgroup of $\Pic(X^{\diamondsuit})$ with finite index. But $\Pic(X^{\diamondsuit})=\Pic(X)$ by the factoriality of $X$ and the corresponding equality for divisor class groups. It follows that $\Pic(Y)$ is a finite index subgroup of $\Pic(Y^{\diamondsuit})$ if and only if it is a finite index subgroup of $\Pic(X)$, if and only if $\Phi_1$ has finite image.

Now we prove (b). Recall the definition of $\Phi_2$ from \cite[Lemma 2.2]{KKV}: choose an arbitrary $\bar{G}$-orbit (which we may assume to lie in $X^{\diamondsuit}$), restrict a line bundle from $X$ to this orbit, and pull it back to $\bar{G}$ via the action map. Since $\Pic(X)$ is isomorphic to $\Pic(X^{\diamondsuit})$, they have the same image in $\Pic(G)$. It follows that $\Phi_1$ has trivial image if and only if $i_1$ and $i_2$ are inclusions of subgroups of the same index, if and only if $\Pic(Y)=\Pic(Y^{\diamondsuit})$; that is, $Y$ is factorial.
\end{proof}

\addtocontents{toc}{\protect\setcounter{tocdepth}{-1}}
\bibliographystyle{alpha}
\bibliography{BIB}
\end{document}